\newcommand{\disk}{\ensuremath{\mathbb{D}} } 
\newcommand{\sphere}{\overline{\mathbb{C}}}
\theoremstyle{plain}
        \newtheorem{theorem}{Theorem}[section]
        \newtheorem{Problem}{Problem}
        \newtheorem{lemma}[theorem]{Lemma}
        \newtheorem{corollary}[theorem]{Corollary}
\theoremstyle{definition}
        \newtheorem{definition}[theorem]{Definition}
\theoremstyle{remark}
    \newtheorem{remark}[theorem]{Remark}
\numberwithin{equation}{section} 
\numberwithin{figure}{section} 
\author{Eric Schippers}
\address{Eric Schippers \\ Department of Mathematics \\
University of Manitoba\\
Winnipeg, Manitoba \\  R3T 2N2 \\ Canada}
\email{eric\_schippers@umanitoba.ca}
\author{Wolfgang Staubach}
\address{Wolfgang Staubach\\ Department of Mathematics\\
Uppsala University\\
Box 480\\ 751 06 Uppsala\\ Sweden}
\email{wulf@math.uu.se}
\subjclass[2010]{Primary 35Q15, 30C62, 30E25, 31C25 ; Secondary 31A20}
\keywords{Riemann-Hilbert problem, Harmonic reflection, Limiting Cauchy integral, Quasidisks, Dirichlet space}
\title[Harmonic reflection and Riemann-Hilbert problem]{Harmonic reflection in quasicircles and well-posedness of a Riemann-Hilbert problem on quasidisks}
\dedicatory{Dedicated to the memory of Ida Bulat}
\begin{document}

\begin{abstract}  A complex harmonic function of finite Dirichlet energy on a Jordan domain has
 boundary values in a certain conformally invariant sense, by a construction of H. Osborn.
 We call the set of such boundary values the Douglas-Osborn space.
 One may then attempt to solve the Dirichlet problem on the complement for these boundary values.  This defines a
 reflection of harmonic functions.  We show that quasicircles are precisely those Jordan curves
 for which this reflection is defined and bounded.

 We then use a limiting Cauchy integral along level curves of Green's function to show that
 the Plemelj-Sokhotski jump formula holds on quasicircles with boundary data in the Douglas-Osborn
 space. This enables us to prove the well-posedness of a Riemann-Hilbert problem with boundary data in the Douglas-Osborn
 space on quasicircles.
\end{abstract}
\thanks{Both authors are grateful for the financial support from the Wenner-Gren Foundations. Eric Schippers is also partially supported by the National Sciences and Engineering Research Council of Canada. }
\maketitle

\begin{section}{Introduction}

Consider the following Riemann-Hilbert problem:
\begin{Problem}\label{Riemann-Hilbert}
Given a domain in the plane and a function defined on its boundary, find holomorphic functions in the inside and the outside the domain so that the difference of their boundary values is equal to the given function.
\end{Problem}

The main issue in this problem is the regularity of the boundary value function (i.e. the boundary data) on one hand, and the regularity of the boundary curve on the other. If the boundary of the domain is a smooth simple closed curve and the boundary function is  Lipschitz on the boundary curve, then Problem 1 is the famous Sokhotski-Plemelj jump problem, which was solved independently by Y. Sokhotski \cite{Sokhotski} and J. Plemelj \cite{Plemelj}. In general, if the boundary of the domain is a rough, say fractal-type curve, then given certain functions on the domain, it is not clear how to extend those functions to the boundaries. However, in \cite{Semmes}, S. Semmes studied Problem 1 when the boundary curve is a so-called chord-arc curve, and obtained a solution to the problem when the boundary data is in $L^2$. Another difficulty that frequently occurs in this context is that Sobolev/Besov spaces are not well-defined on all fractal-type curves, which makes the boundary value problems with data in those spaces problematic. Furthermore, it is difficult to establish the existence of boundary values of functions that are defined on either of the sides of such curves, or even to find the appropriate sense in which they are defined. Nevertheless, in problems connected to quasiconformal Teichm\"uller theory as well as those arising in connection to the mathematical foundations of conformal field theory, one inevitably confronts Problem 1 where the domain is a quasidisk. Indeed our particular choice of the Riemann-Hilbert problem here is strongly motivated by applications in the aforementioned fields \cite{Radnell-Schippers-Staubach}.\\

A quasidisk is the image of the unit disk under a quasiconformal map of the plane. The boundaries of quasidisks, called quasicircles, can be very rough curves with Hausdorff dimensions arbitrarily close to $2$; examples include fractal-type curves such as snowflakes. The authors (together with D. Radnell) had previously studied Problem 1 in the case when the boundary curve is assumed to be a Weil-Petersson class quasicircle \cite{RSS_WPjump}, and Schippers and Staubach extended the study of Problem 1 to $d$-regular quasicircles (which are a generalization of Ahlfors-regular quasicircles) where the boundary function was assumed to belong to a certain Besov space \cite{SchippersStaubachBesov}.\\

Here we consider Problem \ref{Riemann-Hilbert} when the domain is a general quasidisk and remove the assumption of $d$-regularity which was imposed in \cite{SchippersStaubachBesov}. Moreover, as opposed to \cite{SchippersStaubachBesov}, we obtain the solution for an optimal class of boundary data and show that our solution yields yet another characterization of quasidisks. More precisely, to bypass the regularity issues of the boundary, we use a particular reflection of harmonic functions, and a limiting Cauchy integral. The reflection of harmonic functions exists and is bounded if and only if the domain is a quasidisk.  We then use this to  demonstrate that Problem \ref{Riemann-Hilbert} has a unique solution for a space of functions (which we call the Douglas-Osborn space) defined on the boundary of a quasidisk, and that the solution depends continuously on the boundary data. Hence the Riemann-Hilbert problem above is well-posed.\\

 In this connection we would like to mention that the solution of this boundary value problem could be compared with Poincar\'e's lemma. That is, the existence of solutions to $d f=\alpha$ for a closed differential form $\alpha$ (exactness) is intimately connected to the topology of the underlying domain (or manifold). Here, the existence of a solution of the Riemann-Hilbert problem is intimately connected, rather, to the regularity (e.g. the Hausdorff dimension) of its boundary.\\

   Here is an explanation of our approach.  Recall the classical Plemelj-Sokhotski jump formula \cite{Plemelj}, which asserts that for a complex function $h$ (say, continuous)
 on a smooth closed Jordan curve $\Gamma$ bounding two regions $\Omega^\pm$ in the plane (where
 we assume $\Omega^-$ is unbounded), if
 \begin{equation} \label{eq:intro}
  h_\pm(z) = \pm \frac{1}{2\pi i} \int_{\Gamma} \frac{h(\zeta)}{\zeta - z} d\zeta   \ \ \ z \in \Omega^\pm
 \end{equation}
 then
 \[  h(z) = \lim_{\Omega^+ \ni w \rightarrow z} h_+(w) + \lim_{\Omega^- \ni w \rightarrow z} h_-(w).  \]
 For smooth boundaries and regular boundary values,
 this provides a solution to Problem 1, as mentioned above; the term ``jump'' refers to the discontinuity in the boundary values of the Cauchy integral.  Henceforth, we will use the
 equivalent formulation of the jump problem which requires an expression for $h$ as a sum
 rather than a difference.
 To solve Problem \ref{Riemann-Hilbert} for quasicircles, we prove a jump formula for boundary data that are the limiting values of complex harmonic functions of bounded Dirichlet energy. The choices
 are naturally dictated and in some sense optimal.  We also give a characterization of quasidisks
 in terms of the existence of a bounded reflection on harmonic functions.\\

 The boundary values are defined using the following conformally
 invariant construction of H. Osborn \cite{Osborn}.  Let $\Gamma$ be any
 Jordan curve, with Green's function $g_p$ with singularity at $p$.  Let $C_{p,\epsilon}$
 denote the level curves $g_p(z)=-\epsilon$, and for $q \in \Gamma$ let $\gamma_{p,q}$ denote
 the unique orthogonal curve beginning at $p$ and terminating at $q$.
 Osborn showed that for any Jordan
 curve, a harmonic function of finite Dirichlet energy in a connected component of the complement
 has boundary values a.e. on $\Gamma$ with respect to harmonic measure, where these boundary values are approached along $\gamma_{p,q}$.

 In this paper, we show that if $\Gamma$ is a quasicircle bounding two domains $\Omega^\pm$ in the sphere,
 the space of functions obtained as boundary values of complex harmonic functions
 of finite Dirichlet energy on $\Omega^+$ or $\Omega^-$ are the same.   We call this
 space the Douglas-Osborn space.   Thus we may
 define a ``harmonic reflection'' in quasicircles as follows.  Given $h_{\Omega^+}$ in the Dirichlet
 space of $\Omega^+$, restrict to $\Gamma$ in the sense of Osborn, and then let $h_{\Omega^-}$ be
 the unique function in the Dirichlet space of $\Omega^-$ with boundary values $h$.
 This is well-defined for quasicircles by the above, and furthermore is bounded.  The same holds for
 the inverse reflection.\\

 We also define a boundary Cauchy integral  for $h$ in the Douglas-Osborn space of a quasicircle
 $\Gamma$,
 by performing the Cauchy integral (\ref{eq:intro}) of the harmonic extension of $h$ to $\Omega^+$ along the
 level curves $C_{p,\epsilon}$
 and letting $\epsilon$ go to zero.  We show that one obtains the same result if one extends
 instead to $\Omega^-$ and uses level curves in $\Omega^-$.  Finally, we show that the outcome
 of this limiting Cauchy integral satisfies the Plemelj-Sokhotski jump formula.\\

 We conclude the introduction with a remark on the results of Osborn. Osborn's work was concerned with extending the ideas
of J. Douglas used in his resolution of the Plateau problem and making
them rigorous. In particular,
 Douglas expressed the Dirichlet norm of a harmonic function in terms of an integral over
 the boundary of a domain, and in some sense his solution was closely related to the Dirichlet principle.
 For a historical discussion see J. Gray and M. Micallef \cite{GrayMicallef}. Osborn's beautiful
 paper extended this to finitely-connected domains bounded by Jordan curves, among other striking results.
 It is with this historical development in mind that we named the set of boundary values
 after Douglas and Osborn.\\

\end{section}
\begin{section}{Reflection of harmonic functions}  \label{se:reflection}
\begin{subsection}{Notation and terminology}
  In this paper, $\mathbb{C}$ is the complex plane,
 $\sphere$ is the Riemann sphere, $\disk^+ = \{z \in \mathbb{C} \,:\, |z| <1 \}$,
 and $\disk^- = \{z  \in \mathbb{C} \,:\, |z|>1 \} \cup \{ \infty \}$.
 We will denote the boundary of a domain $\Omega$ by $\partial \Omega$, and
 $\partial \disk^\pm$ by $\mathbb{S}^1$.   Closure will be denoted by $\text{cl}$. Furthermore, generic constants appearing in all the estimates will be denoted by $C$ even though they may be different in the same line or from line to line.

 Throughout the paper, we will deal with Jordan curves $\Gamma$ in $\sphere$.
  We always assume that $\Gamma$ is given an orientation, and denote the components
  of the complement of $\Gamma$ in $\sphere$ by $\Omega^+$ and $\Omega^-$.
  We choose $\Omega^+$ to be the component such that $\Gamma$ is positively oriented with respect to $\Omega^+$.
  In the case that $\Gamma$ is bounded, we always assume that $\Omega^+$ is the
  bounded component of the complement, so that the orientation of $\Gamma$ is uniquely
  determined in this case.
  In Section \ref{se:reflection}, the orientation of the curve is only necessary to fix a meaning of the
  notation $\Omega^\pm$.  In Section \ref{se:jump} the orientation affects the signs of the integrals.
  Finally, the phrase ``$\Gamma$ bounds $\Omega$'' does
  not imply that $\Omega$ is a bounded domain.

We shall also consider harmonic and holomorphic functions on domains containing
  $\infty$.  To say that $h$ is harmonic or holomorphic on a domain $\Omega$ containing
  $\infty$ is to say that $h$ is harmonic or holomorphic on $\Omega \backslash \{\infty \}$
  and $h(1/z)$ is harmonic or holomorphic on a neighbourhood of $0$.
  In this paper, a conformal map between open connected subsets $D$ and $\Omega$ of $\sphere$ is taken to mean a map which is a meromorphic bijection between $D$ and $\Omega$.  (In particular, it can have at worst a simple pole, possibly at $\infty$).
\end{subsection}
\begin{subsection}{Osborn space of boundary values}
Throughout
 the paper, we choose norms which are conformally invariant, so that the behaviour
 at $\infty$ does not require special treatment.

Let $dA$ denote the Lebesgue measure, and $\partial=\frac{1}{2}( \partial_x -i\partial_y )$, $\overline {\partial}=\frac{1}{2} (\partial_x +i\partial_y )$ denote the Wirtinger operators. Define the complex Dirichlet space by
 \[  \mathcal{D}_{\mathrm{harm}}(\Omega) = \left\{ h\,\,\,  \text{is  harmonic } \,:\, \iint_{\Omega} \left(|\partial h|^2 + |\overline{\partial}h|^2
 \right)dA < \infty
   \right\}  \]
 with semi-norm
 \begin{equation} \label{eq:Dharm_seminorm}
   \| h \|^2_{\mathcal{D}_{\mathrm{harm}}(\Omega)} =  \iint_{\Omega} \left( |\partial h|^2 + |\overline{\partial}h|^2 \right)dA.
 \end{equation}
 This is of course not a norm since if two functions $h_1$ and $h_2$ differ by a constant, then
 $\| h_1\|_{\mathcal{D}_{\mathrm{harm}}(\Omega)}=\| h_2\|_{\mathcal{D}_{\mathrm{harm}}(\Omega)}$.
 Note that this space is conformally invariant, so that if $g:D \rightarrow \Omega$ is any biholomorphism
 then
 \[  \| h \circ g \|_{\mathcal{D}_{\mathrm{harm}}(D)} = \| h \|_{\mathcal{D}_{\mathrm{harm}}(\Omega)}.  \]

 For $p \in \Omega$, we will also consider the pointed Dirichlet space $\mathcal{D}_{\mathrm{harm},p}(\Omega)$,
 which is the same set of functions but with the actual norm
  \[  \| h \|^2_{\mathcal{D}_{\mathrm{harm},p}(\Omega)} =  |h(p)|^2 + \iint_{\Omega} \left(|\partial h|^2 + |\overline{\partial} h|^2
    \right) dA.  \]
 This also satisfies a conformal invariance:
 if $g:D \rightarrow \Omega$ is any biholomorphism
 then
 \[  \| h \circ g \|_{\mathcal{D}_{\mathrm{harm},p}(D)} = \| h \|_{\mathcal{D}_{\mathrm{harm},g(p)}(\Omega)}.  \]

 Now let $\Omega$ be a Jordan domain as above with boundary $\Gamma$.  We define a
 set of boundary values of $\mathcal{D}_{\mathrm{harm}}(\Omega)$, following Osborn \cite{Osborn}, as follows.
 First some notation is required.
 Fix a point $p \in \Omega$ and let $g_p$ be Green's function on $\Omega$ with
 singularity at $p$.   For $\epsilon>0$ let $C_{p,\epsilon}$ be the level curve $g_p(z)= - \epsilon$.
 For $q \in \Gamma$, let $\gamma_{p,q}$ denote the unique ray which is orthogonal to the set of
 curves $C_{p,\epsilon}$, starting at $p$ and terminating at $q$.

 Another characterization of the curves $\gamma_{p,q}$ and $C_{p,\epsilon}$ will be useful.  Let
 $f:\disk^+ \rightarrow \Omega$ be a conformal map such that $f(0)=p$.  By the Osgood-Carath\'eodory
 theorem, $f$ extends to a homeomorphism of $\mathbb{S}^1$ onto $\Gamma$.    In that case,
 each curve $\gamma_{p,q}$ is the image of the ray $t \mapsto t f^{-1}(q)$ for $t \in [0,1]$.
 Also $C_{p,\epsilon}$ is the image of $|z|=r$ under $f$ for $r = e^{-\epsilon}$.
 This follows directly from the conformal invariance of Green's function. One
 can also characterize the curves $\gamma_{p,q}$ as the hyperbolic geodesic ray
 with end-points $p$ and $q$.

 In \cite{Osborn}, Osborn showed that for any $h \in \mathcal{D}_{\mathrm{harm}}(\Omega)$, the limit
 \[  \lim_{z \rightarrow q} h(z) \]
 exists almost everywhere.  We provide the proof in the simply connected case,
 as a consequence of classical results.\\

 The Sobolev space $H^{\frac{1}{2}}(\mathbb{S}^1)$ is the space of functions $h\in L^2 (\mathbb{S}^1)$ such that $\sum_{n=-\infty}^{\infty}
 (1+|n|^2)^{\frac{1}{2}}|\hat{h}(n)|^2 <\infty ,$  where $\hat{h}(n):=\frac{1}{2\pi}\int_{\mathbb{S}^1} h(t) e^{-int}\, dt.$ The norm of $h$ in $H^{\frac{1}{2}}(\mathbb{S}^1)$ is given by
 \[  \Vert h\Vert^2_{H^{\frac{1}{2}}(\mathbb{S}^1)}=\sum_{n=-\infty}^{\infty} (1+|n|^2)^{\frac{1}{2}}|\hat{h}(n)|^2 .  \]
 Here we consider a slight modification of this space, denoted by $\mathcal{H}(\mathbb{S}^1)$, which consists of $L^2$ functions on the
 circle for which
 \begin{equation} \label{eq:circle_boundary_seminorm}
  \|h\|^2_{\mathcal{H}(\mathbb{S}^1)} =  \sum_{n=-\infty}^\infty |n| |\hat{h}(n)|^2 <\infty.
 \end{equation}
 This is of course a semi-norm.  $\mathcal{H}(\mathbb{S}^1)$ and
 $H^{\frac{1}{2}}(\mathbb{S}^1)$ are the same set of functions \cite{SchippersStaubach_Proc}.
 We also consider the norm
 \begin{equation} \label{eq:circle_boundary_norm}
  \|h\|^2_{\mathcal{H}_0(\mathbb{S}^1)} = |\hat{h}(0)|^2 + \sum_{n=-\infty}^\infty |n| |\hat{h}(n)|^2 <\infty.
 \end{equation}
 One can easily show that the norms of  $H^{\frac{1}{2}}(\mathbb{S}^1)$ and $\mathcal{H}_0(\mathbb{S}^1)$ are equivalent, see e.g. \cite{SchippersStaubach_Proc}.

In the following discussion, and the rest of this paper, the term ``capacity'' will always refer to logarithmic capacity, and similarly for ``outer capacity''.
The space $\mathcal{H}(\mathbb{S}^1)$ consists precisely of the boundary values
 of functions in the Dirichlet space $\mathcal{D}_{\text{harm},0}(\disk^+)$, and the norms
 are equal \cite{NagSullivan}.  We briefly outline this correspondence here.
 The Fourier series of any element $h(e^{i\theta})= \sum_{n=-\infty}^\infty \hat{h}(n) e^{i n \theta} \in \mathcal{H}(\mathbb{S}^1)$
 converges everywhere on $\mathbb{S}^1$ except possibly on a set of outer capacity zero, see A. Zygmund, \cite[Theorem 11.3, Chapter XIII]{Zygmund_book}.    Therefore
  the series
  \begin{equation} \label{eq:H_form_Dirichlet}
   H(z)= \sum_{n=0}^\infty \hat{h}(n) z^n + \sum_{n=1}^\infty \hat{h}(-n) \bar{z}^n
  \end{equation}
  converges to a sum of a holomorphic and an anti-holomorphic function and is thus complex
  harmonic.  Furthermore, a standard computation shows that
  \[  \| h \|_{\mathcal{H}(\mathbb{S}^1)} = \| H \|_{\mathcal{D}_{\text{harm}}(\disk^+)}
   \ \ \text{and} \ \  \| h \|_{\mathcal{H}_0(\mathbb{S}^1)} = \| H \|_{\mathcal{D}_{\text{harm},0}(\disk^+)}.  \]
   By Abel's theorem and the convergence of $h$ in the sense above, $\lim_{t \rightarrow 1^-}{H(te^{i\theta})} = h(e^{i\theta})$ for
   all $e^{i\theta} \in \mathbb{S}^1$ except on a set of zero outer capacity; that is,
   the boundary values agree with $h$ in the sense of Osborn except on this set.  Conversely,
   an element $H(z) \in \mathcal{D}_{\text{harm}}(\disk^+)$ has the form
   (\ref{eq:H_form_Dirichlet}) with coefficients $\hat{h}(n)$ satisfying
   (\ref{eq:circle_boundary_seminorm}).  One thus obtains an element $h$ of $\mathcal{H}(\mathbb{S}^1)$ by replacing $z$ with $e^{i\theta}$, and the argument
   above can be repeated to show that $h$ is the limit of $H$ on the boundary
   in the same sense.

Summarizing, we have the following result.
 \begin{theorem} \label{th:boundary_values_disk}
  Every $H \in \mathcal{D}_{\mathrm{harm}}(\disk^+)$ has boundary values $($in the sense of Osborn$)$ except on a subset
   of $\mathbb{S}^1$ with outer capacity zero, and the restriction to the boundary belongs to $\mathcal{H}(\mathbb{S}^1)$.
  Conversely, every $h \in \mathcal{H}(\mathbb{S}^1)$ has a unique complex harmonic extension
  $H \in \mathcal{D}_{\mathrm{harm}}(\disk^+)$ whose boundary values coincide with $h$
  in the same sense. The restriction and extension are isometries with respect to $\| \cdot \|_{\mathcal{H}_0(\mathbb{S}^1)}$
  and $\| \cdot \|_{\mathcal{D}_{\mathrm{harm},0}(\disk^+)}$. The restriction and
  extension are also bounded with respect to the  semi-norms $\| \cdot \|_{\mathcal{H}(\mathbb{S}^1)}$
  and $\| \cdot \|_{\mathcal{D}_{\mathrm{harm}}(\disk^+)}$.
 \end{theorem}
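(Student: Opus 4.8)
The plan is to read the correspondence off the classical facts recorded just above; I take it in four stages — the restriction map $H \mapsto h$, the extension map $h \mapsto H$, uniqueness of the extension, and the norm identities. For the restriction, given $H \in \mathcal{D}_{\mathrm{harm}}(\disk^+)$ I would split it as $H(z) = \sum_{n \geq 0} a_n z^n + \sum_{n \geq 1} b_n \bar{z}^n$, the sum of its holomorphic and antiholomorphic parts, which is legitimate because $\partial H$ is holomorphic and $\overline{\partial} H$ antiholomorphic, each of finite area integral over $\disk^+$. A Parseval/orthogonality computation in polar coordinates (using $\iint_{\disk^+}|z|^{2(n-1)}\,dA = \pi/n$ and $\int_0^{2\pi} e^{i(m-n)\theta}\,d\theta = 0$ for $m \neq n$) then yields $\|H\|_{\mathcal{D}_{\mathrm{harm}}(\disk^+)} = \|h\|_{\mathcal{H}(\mathbb{S}^1)}$, where $h(e^{i\theta}) := \sum_n \hat h(n) e^{in\theta}$ with $\hat h(n) := a_n$ for $n \geq 0$ and $\hat h(n) := b_{-n}$ for $n < 0$ — exactly the identity recorded in the text. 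In particular this formal boundary series has finite $\mathcal{H}(\mathbb{S}^1)$-seminorm, so by the cited theorem of Zygmund it converges at every point of $\mathbb{S}^1$ off a set of outer capacity zero, defining $h \in \mathcal{H}(\mathbb{S}^1)$. Since the pole is placed at $p = 0$, the conformal description of the Osborn curves given above shows that the orthogonal trajectories $\gamma_{0,q}$ are honest radii and the level curves $C_{0,\epsilon}$ honest circles; hence the Osborn limit at $q = e^{i\theta}$ is the radial limit there, and Abel's theorem gives $\lim_{r \to 1^-} H(re^{i\theta}) = h(e^{i\theta})$ at every point where the boundary series converges, i.e.\ off a set of outer capacity zero. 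So $h$ is the boundary restriction of $H$ in Osborn's sense and lies in $\mathcal{H}(\mathbb{S}^1)$.

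For the extension, given $h \in \mathcal{H}(\mathbb{S}^1)$ I would define $H$ by the series \eqref{eq:H_form_Dirichlet}; the summability $\sum_n |n|\,|\hat h(n)|^2 < \infty$ makes $H \in \mathcal{D}_{\mathrm{harm}}(\disk^+)$ by the same computation run backwards, and the Zygmund--Abel argument just given shows the Osborn boundary values of $H$ agree with $h$ off a set of outer capacity zero. For uniqueness, suppose $H_1, H_2 \in \mathcal{D}_{\mathrm{harm}}(\disk^+)$ both restrict, in Osborn's sense, to the same $h$ off a set of outer capacity zero. Since a set of outer capacity zero is Lebesgue-null, the two boundary functions agree a.e.\ on $\mathbb{S}^1$, hence have identical Fourier coefficients; as each $H_i$ is recovered from its coefficients by \eqref{eq:H_form_Dirichlet}, $H_1 = H_2$. (This also pins down $\hat h(0) = H(0)$, so there is no additive-constant ambiguity.) Finally, the computation above already gives the two-sided bound $\|h\|_{\mathcal{H}(\mathbb{S}^1)} = \|H\|_{\mathcal{D}_{\mathrm{harm}}(\disk^+)}$ for the semi-norms, and since $\hat h(0) = a_0 = H(0)$ one has $\|h\|^2_{\mathcal{H}_0(\mathbb{S}^1)} = |\hat h(0)|^2 + \sum_{n \neq 0}|n|\,|\hat h(n)|^2 = |H(0)|^2 + \|H\|^2_{\mathcal{D}_{\mathrm{harm}}(\disk^+)} = \|H\|^2_{\mathcal{D}_{\mathrm{harm},0}(\disk^+)}$, so restriction and extension are mutually inverse isometries for these norms.

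The two genuinely substantive inputs are Zygmund's quasi-everywhere convergence theorem and the identification of Osborn boundary values with ordinary radial limits on $\disk^+$; I expect the latter to be the step most in need of care. It rests entirely on the conformal characterization of the curves orthogonal to the Green's-function level curves recorded above (for a conformal $f : \disk^+ \to \Omega$ with $f(0) = p$ these are precisely the images of radii under $f$), which for $\Omega = \disk^+$, $p = 0$ collapses Osborn's construction to the classical statement that a finite-Dirichlet-energy harmonic function on the disk has radial limits quasi-everywhere. All remaining work is bookkeeping, the only delicate spots being the constants in the Parseval computation (so that they match the semi-norms chosen in \eqref{eq:Dharm_seminorm} and \eqref{eq:circle_boundary_seminorm}) and the use of the fact that sets of outer capacity zero are Lebesgue-null, which is what lets one pass from quasi-everywhere agreement on the circle to equality of Fourier data and hence of the harmonic extensions.
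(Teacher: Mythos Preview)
Your proposal is correct and follows essentially the same route as the paper: the theorem is stated there as a summary of the preceding discussion, which uses exactly the Fourier/power-series decomposition, Zygmund's quasi-everywhere convergence theorem, Abel's theorem for radial limits, and the direct Parseval computation of the norms that you outline. You are somewhat more explicit than the paper on two points---the identification of Osborn's orthogonal curves with honest radii when $p=0$, and the uniqueness argument via ``outer capacity zero $\Rightarrow$ Lebesgue-null $\Rightarrow$ equal Fourier coefficients''---but these are natural elaborations of the same argument rather than a different approach.
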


 \begin{remark} \label{re:outer_cap_contained} Any set $E \subset \mathbb{S}^1$ of outer capacity zero is
  contained in a Borel set $I \subset \mathbb{S}^1$ of capacity zero.  To see this, for any $n$ let $V_n$ be an open subset of $\mathbb{C}$ containing $E$ such that the capacity of $V_n$ is
  less than $1/n$. If we let $U_n = V_n \cap \mathbb{S}^1$, then $U_n$ is an open set in $\mathbb{S}^1$ (and
  a Borel set in $\mathbb{C}$).  Since $U_n \subset V_n$ the capacity of $U_n$ is less than $1/n$ by
  \cite[5.1.2(a)]{Ransford_book}.  Now if we set $I = \cap_{n=1}^\infty U_n$ (which is obviously a Borel set), then
  since $I \subseteq U_n$ for all $n$ (again using \cite[5.1.2(a)]{Ransford_book}) we have that the capacity of
  $I$ is less than or equal to $1/n$ for all $n$.  Hence $I$ is the desired set of capacity zero which contains $E$ by
  construction.

  Note that it is possible to take $I$ to be a $G_\delta$ set.  To see this,
  in the above construction one could choose $U_n = V_n \cap \{ z\,:\, 1-1/n < |z|< 1+ 1/n \}$.
 \end{remark}

 The extension and restriction map is described explicitly as follows.
 Given $h \in \mathcal{D}_{\mathrm{harm}}(\disk^+)$, it has a decomposition $h = u + \overline{v}$
 for holomorphic functions
 \[  u(z) = \sum_{n=0}^\infty u_n z^n, \ \   \ \ \ v(z) = \sum_{n=1}^\infty v_n z^{n}.  \]
 In that case $h(e^{i\theta}) = \sum_{n=0}^\infty u_n e^{in \theta} + \sum_{n=1}^\infty \overline{v_n} e^{-i n \theta}$
 and the extension is obtained similarly.

 \begin{remark} \label{re:equal_on_Dplusminus}
 We also observe that Theorem \ref{th:boundary_values_disk} applies equally to $\mathcal{D}_{\mathrm{harm}}(\disk^-)$.  Note
  that the reflection
  \begin{align*}  \label{eq:reflection_in_circle}
   \mathfrak{R}(\mathbb{S}^1):\mathcal{D}_{\mathrm{harm}}(\disk^+) & \rightarrow \mathcal{D}_{\mathrm{harm}}(\disk^-)  \\
   H(z) & \mapsto H(1/\bar{z})
  \end{align*}
  is a semi-norm preserving isomorphism between $\mathcal{D}_{\mathrm{harm}}(\disk^+)$ and $\mathcal{D}_{\mathrm{harm}}(\disk^-)$.
  In fact it is an isometry between $\mathcal{D}_{\mathrm{harm},0}(\disk^+)$ and $\mathcal{D}_{\mathrm{harm}, \infty}(\disk^-)$.

  We will also denote the inverse of $\mathfrak{R}(\mathbb{S}^1)$ with the same notation.
 \end{remark}

 Thus given a Jordan domain $\Omega$ and fixed $p \in \Omega$ we obtain
 boundary values for $H \in \mathcal{D}_{\mathrm{harm}}(\Omega)$ as follows.
 \begin{theorem}[Osborn \cite{Osborn}] \label{Osborns sats} Let $\Gamma$ be a Jordan curve in $\sphere$
  and let $\Omega$ be either of the components of the complement.     Fix $p \in \Omega$.
  For any $H \in \mathcal{D}_{\mathrm{harm}}(\Omega)$,
  \[  h(q) = \lim_{z \rightarrow q} H(z), \ \ \ \ z \in \gamma_{p,q} \]
  exists for almost all $q$ with respect to the harmonic measure on
  $\Gamma$ with respect to $p$ and $\Omega$.
 \end{theorem}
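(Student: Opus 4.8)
The plan is to reduce the statement to Theorem \ref{th:boundary_values_disk} by transporting everything to $\disk^+$ through a Riemann map, and then to convert the exceptional set of outer capacity zero on $\mathbb{S}^1$ into a set of harmonic measure zero on $\Gamma$. First I would fix a conformal map $f:\disk^+ \to \Omega$ with $f(0)=p$; by the Osgood--Carath\'eodory theorem $f$ extends to a homeomorphism of $\text{cl}(\disk^+)$ onto $\text{cl}(\Omega)$, restricting to a homeomorphism $\mathbb{S}^1 \to \Gamma$. (When $\infty \in \Omega$ this $f$ is meromorphic with a simple pole, which is allowed by our conventions; conformal invariance of Green's function still gives $g_p(z)=\log|f^{-1}(z)|$, so that, as recorded above, $C_{p,\epsilon}=f(\{|z|=e^{-\epsilon}\})$ and $\gamma_{p,q}$ is the $f$-image of the radial segment $t\mapsto t f^{-1}(q)$, $t\in[0,1]$.) Since the composition of a harmonic function with a holomorphic map is harmonic and the Dirichlet energy is conformally invariant, $H\circ f \in \mathcal{D}_{\mathrm{harm}}(\disk^+)$ whenever $H \in \mathcal{D}_{\mathrm{harm}}(\Omega)$.

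Next I would apply Theorem \ref{th:boundary_values_disk} to $H\circ f$: there is a set $E \subset \mathbb{S}^1$ of outer capacity zero such that for every $e^{i\theta}\in \mathbb{S}^1 \setminus E$ the radial limit $\lim_{t\to 1^-}(H\circ f)(te^{i\theta})$ exists. Writing $q=f(e^{i\theta})$ and using the identification of $\gamma_{p,q}$ with the $f$-image of the radius to $f^{-1}(q)$, this says precisely that $\lim_{z\to q,\, z\in\gamma_{p,q}} H(z)$ exists for every $q\in\Gamma\setminus f(E)$. It then remains to show that $f(E)$ is null for the harmonic measure $\omega(p,\cdot\,,\Omega)$.

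For the last step I would invoke two facts. By Remark \ref{re:outer_cap_contained}, $E$ is contained in a Borel subset of $\mathbb{S}^1$ of (logarithmic) capacity zero; since a subset of $\mathbb{C}$ of positive Lebesgue measure has positive capacity, $E$ has arclength measure zero on $\mathbb{S}^1$. On the other hand, by conformal invariance of harmonic measure together with the fact that harmonic measure of $\disk^+$ evaluated at $0$ is normalized arclength on $\mathbb{S}^1$, one has $\omega(p,f(A),\Omega)=\tfrac{1}{2\pi}\,(\text{arclength of }A)$ for every Borel $A\subset\mathbb{S}^1$; hence $f(E)$ lies in an $\omega(p,\cdot\,,\Omega)$-null Borel set, so $h(q)=\lim_{z\to q}H(z)$ along $\gamma_{p,q}$ exists for $\omega(p,\cdot\,,\Omega)$-a.e.\ $q\in\Gamma$. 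The only genuinely non-formal point — and the one I expect to require the most care — is this final identification of harmonic measure on $\Gamma$ with the pushforward under $f$ of normalized arclength (equivalently, its absolute continuity with respect to that pushforward), combined with the classical fact that capacity-zero, hence outer-capacity-zero, sets are Lebesgue-null; everything else is transport of structure through $f$ and an appeal to Theorem \ref{th:boundary_values_disk}.
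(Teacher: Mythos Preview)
Your proposal follows essentially the same route as the paper: transport to $\disk^+$ via a Riemann map $f$ with $f(0)=p$, apply Theorem~\ref{th:boundary_values_disk} to $H\circ f$, and then argue that the exceptional set on $\mathbb{S}^1$ pushes forward to a harmonic-measure-null set on $\Gamma$. The paper executes the last step slightly differently: rather than passing through arclength, it cites directly that a capacity-zero subset of $\mathbb{S}^1$ has zero harmonic measure with respect to $\disk^+$ (Ransford, Theorem~4.3.6), and then uses conformal invariance of harmonic measure.

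There is one slip in your final paragraph worth fixing. You write that ``a subset of $\mathbb{C}$ of positive Lebesgue measure has positive capacity, so $E$ has arclength measure zero on $\mathbb{S}^1$.'' But $E\subset\mathbb{S}^1$ already has zero \emph{two-dimensional} Lebesgue measure, so that implication is vacuous. What you need is the (true and standard) one-dimensional statement: a Borel subset of $\mathbb{S}^1$ of positive arclength has positive logarithmic capacity. With that correction your argument goes through; alternatively, do as the paper does and cite the capacity-zero $\Rightarrow$ harmonic-measure-zero fact directly, avoiding the detour through arclength.
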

 \begin{proof} Let $f:\disk^+ \rightarrow \Omega$ be a conformal map such that
  $f(0)=p$.  Since $f$ extends homeomorphically to a map $F:\text{cl} \, \disk^+
  \rightarrow \text{cl} \, \Omega$,
  we have that the limits
  \[  \lim_{\gamma_{p,q}\ni z \rightarrow q} H(z) = \lim_{f^{-1}(\gamma_{p,q}) \ni w \rightarrow f^{-1}(q)} H \circ F(w)       \]
  exist except on a set of outer capacity zero on $\mathbb{S}^1$ by Theorem \ref{th:boundary_values_disk},
  and thus also exist except on a set of capacity zero by Remark \ref{re:outer_cap_contained}. A set of capacity zero on $\mathbb{S}^1$ has zero harmonic measure with respect to $\disk^+$ (see T. Ransford \cite[Theorem 4.3.6]{Ransford_book}).  Since the harmonic measure of a set $E \subset \partial \Omega$
  with respect to $\Omega$ is the harmonic measure of $F^{-1}(E)$ with respect to
  $\mathbb{S}^1$ (see J. Garnett and D. Marshall
  \cite[I.3]{Garnett_Marshall}), this completes the proof.
 \end{proof}
 \begin{remark} \label{re:conformal_invariance_limits}  Note that the proof shows that the set of
  points on the boundary for which the radial limits exist is preserved by a conformal map.
 \end{remark}

 The conformal invariance of $\mathcal{D}_{\mathrm{harm}}(\Omega)$ yields in particular its
 invariance under biholomorphisms $T: \Omega \rightarrow \Omega$.
 Furthermore, since sets of harmonic measure $0$ are also conformally invariant (that is, independent of the point $p$), it immediately
 follows that the possible set of functions on the boundary (defined up to sets of harmonic measure $0$) is independent of the choice
 of $p \in \Omega$. Thus we can make the following definition.
 \begin{definition}  \label{de:Osborn_space}
  For a Jordan curve $\Gamma$ bounding a domain $\Omega$, let $\mathcal{H}_\pm(\Gamma)$ denote
  the set of functions obtained as boundary values of functions in $\mathcal{D}_{\mathrm{harm}}(\Omega^\pm)$.
 \end{definition}
 It is important to observe that $\mathcal{H}_\pm(\Gamma)$ are quite large classes of functions.
 For example, $\mathcal{H}_+(\Gamma)$ contains $H \circ f^{-1}$ for all $H$ which are $C^1$ on
 the closed disk $\mathrm{cl}\disk^+$ where $f:\disk^+ \rightarrow \Omega^+$ is a conformal bijection.

 Note that harmonic measure on the inside and outside might be incomparable, and
 thus we must carry the distinction between boundary values obtained from the inside
 and outside.  On the other hand, by applying the reflection $\mathfrak{R}(\mathbb{S}^1)$,
 which preserves harmonic measure with respect to $0$ and $\infty$, we have that
 \[  \mathcal{H}_\pm(\mathbb{S}^1) = \mathcal{H}(\mathbb{S}^1). \]
 \begin{remark}  The existence of boundary
  values can also be viewed in the following way.  The limit exists on
  the ideal boundary of $\Omega$.  So long as we consider only domains bounded
  by Jordan curves, then by the Osgood-Carath\'eodory theorem, there is a one-to-one
  correspondence between points on
  the ideal boundary and the boundary $\partial \Omega$ in $\mathbb{C}$.
 \end{remark}

 Theorem \ref{th:boundary_values_disk} has the following immediate consequence:
 \begin{theorem}  Let $\Gamma$ be a Jordan curve in $\sphere$ bounding domains $\Omega^\pm$.
 Any $h \in \mathcal{H}_\pm(\Gamma)$ is the boundary values of a unique $H \in \mathcal{D}_{\mathrm{harm}}(\Omega^\pm)$.
 \end{theorem}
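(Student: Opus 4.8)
The plan is to transport the problem to the disk via a conformal map and invoke Theorem~\ref{th:boundary_values_disk}. First I would note that existence is already part of Definition~\ref{de:Osborn_space}: to say $h \in \mathcal{H}_\pm(\Gamma)$ is, by that definition, to say that $h$ arises as the Osborn boundary values of some $H \in \mathcal{D}_{\mathrm{harm}}(\Omega^\pm)$. So the only real content is uniqueness, and it suffices to prove that two elements of $\mathcal{D}_{\mathrm{harm}}(\Omega^\pm)$ with the same Osborn boundary values (a.e.\ with respect to harmonic measure on $\Gamma$) must coincide.

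Next I would fix $p \in \Omega^\pm$, pick a conformal map $f : \disk^+ \to \Omega^\pm$ with $f(0) = p$, and let $F$ be its homeomorphic extension to the closures (Osgood-Carath\'eodory). Composition with $f$ is a semi-norm preserving isomorphism $\mathcal{D}_{\mathrm{harm}}(\Omega^\pm) \to \mathcal{D}_{\mathrm{harm}}(\disk^+)$ by the conformal invariance of the Dirichlet semi-norm, and this works uniformly in the behaviour at $\infty$. I would then invoke two compatibility facts, both already contained in the proof of Theorem~\ref{Osborns sats} and Remark~\ref{re:conformal_invariance_limits}: (i) $f$ carries the ray $t \mapsto t F^{-1}(q)$ onto the orthogonal curve $\gamma_{p,q}$, so the Osborn limit of $H$ at $q$ exists if and only if the radial limit of $H \circ f$ at $F^{-1}(q)$ exists, and the two values agree; and (ii) a set $E \subseteq \Gamma$ has harmonic measure zero with respect to $\Omega^\pm$ if and only if $F^{-1}(E)$ has harmonic measure zero on $\mathbb{S}^1$ with respect to $\disk^+$, equivalently Lebesgue measure zero. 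Hence $h \mapsto h \circ F$ identifies the a.e.-defined boundary functions of $\mathcal{D}_{\mathrm{harm}}(\Omega^\pm)$ with those of $\mathcal{D}_{\mathrm{harm}}(\disk^+)$, and $H$ has Osborn boundary values $h$ precisely when $H \circ f$ has boundary values $h \circ F$ in the sense of Theorem~\ref{th:boundary_values_disk}; in particular $h \circ F \in \mathcal{H}(\mathbb{S}^1)$.

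Finally I would apply the uniqueness clause of Theorem~\ref{th:boundary_values_disk}: if $H_1, H_2 \in \mathcal{D}_{\mathrm{harm}}(\Omega^\pm)$ share the Osborn boundary values $h$, then $H_1 \circ f$ and $H_2 \circ f$ are two elements of $\mathcal{D}_{\mathrm{harm}}(\disk^+)$ with boundary values $h \circ F \in \mathcal{H}(\mathbb{S}^1)$, so $H_1 \circ f = H_2 \circ f$ and therefore $H_1 = H_2$. I do not expect a genuine obstacle here; the single point to handle carefully is the comparison of the exceptional sets, namely that ``equal a.e.\ harmonic measure on $\Gamma$'' pulls back under $F^{-1}$ to ``equal a.e.\ harmonic measure, equivalently Lebesgue measure, on $\mathbb{S}^1$'' — and this is exactly the null-set correspondence for harmonic measure already used in the proof of Theorem~\ref{Osborns sats}.
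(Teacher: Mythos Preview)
Your proposal is correct and is precisely the argument the paper has in mind: the paper states this theorem without proof as an ``immediate consequence'' of Theorem~\ref{th:boundary_values_disk}, and your transport to the disk via a conformal map together with the null-set correspondence for harmonic measure (already used in the proof of Theorem~\ref{Osborns sats}) is exactly how that immediacy is cashed out.
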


 Finally, we conclude with a result which will be necessary in the next section.  It is a consequence of
 a theorem due to N. Arcozzi and R. Rochberg \cite{Arcozzi_Rochberg}, which states that if $\phi:\mathbb{S}^1 \rightarrow \mathbb{S}^1$ is a quasisymmetry and $I$ is a closed subset of $\mathbb{S}^1$, then there is a constant $C>0$ depending
  only on $\phi$ such that $\frac{1}{C}\, \text{cap}(I) \leq \text{cap}(\phi(I))\leq C \, \text{cap}(I).$
 In \cite{Arcozzi_Rochberg} this result is stated to be a corollary of a theorem due to
 A. Beurling and L. Ahlfors \cite{BeurlingAhlfors_boundary}, and an independent proof is also given.
 \begin{theorem}   \label{th:outer_capacity_preserved}
  If $\phi:\mathbb{S}^1 \rightarrow \mathbb{S}^1$ is a quasisymmetry then $\phi$ takes
  Borel sets of capacity zero to Borel sets of capacity zero.
 \end{theorem}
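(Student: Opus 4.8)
The plan is to deduce this from the Arcozzi--Rochberg estimate quoted above, which controls capacity only for \emph{closed} subsets of $\mathbb{S}^1$, by using the inner regularity of logarithmic capacity on Borel sets to reduce to compact test sets. First I would record two elementary facts about a quasisymmetry $\phi\colon\mathbb{S}^1\to\mathbb{S}^1$. It is a homeomorphism of $\mathbb{S}^1$; hence it carries Borel sets to Borel sets (the sets with Borel image form a $\sigma$-algebra containing the open sets), so for Borel $E\subseteq\mathbb{S}^1$ the set $\phi(E)$ is Borel and the conclusion is at least meaningful. Moreover, since $\phi$ is continuous and $\mathbb{S}^1$ is compact, $\phi^{-1}$ carries closed --- hence compact --- subsets of $\mathbb{S}^1$ to compact subsets of $\mathbb{S}^1$.

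Next I would invoke the fact that the capacity of a Borel set equals the supremum of the capacities of its compact subsets (a form of Choquet's capacitability theorem; see \cite{Ransford_book}), so that
\[
  \text{cap}(\phi(E)) \;=\; \sup\{\, \text{cap}(K) \,:\, K\subseteq\phi(E),\ K\text{ compact}\,\}.
\]
It therefore suffices to show that every compact $K\subseteq\phi(E)$ has $\text{cap}(K)=0$. Fix such a $K$ and put $L=\phi^{-1}(K)$. Then $L$ is a closed, hence compact, subset of $\mathbb{S}^1$ with $L\subseteq E$, so by monotonicity of capacity \cite[5.1.2(a)]{Ransford_book} we obtain $\text{cap}(L)\le\text{cap}(E)=0$. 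Applying the Arcozzi--Rochberg estimate to the closed set $L$ then gives
\[
  \text{cap}(K) \;=\; \text{cap}(\phi(L)) \;\le\; C\,\text{cap}(L) \;=\; 0,
\]
and taking the supremum over all such $K$ yields $\text{cap}(\phi(E))=0$.

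I do not expect a serious obstacle. Once one is permitted to test capacity against compact subsets, the conclusion is immediate from the quoted estimate; the only point requiring care is the legitimacy of that reduction, which is precisely the content of the capacitability of Borel sets. One should also verify that ``Borel set of capacity zero'' is unambiguous --- for a capacitable, in particular Borel, set inner and outer capacity agree --- so that the statement is consistent with Remark \ref{re:outer_cap_contained} and can be applied, via $\phi^{-1}$ (also a quasisymmetry), to conclude that quasisymmetries preserve the class of Borel capacity-zero sets in both directions.
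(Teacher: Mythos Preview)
Your argument is correct and is essentially the paper's own proof: both reduce to compact subsets via inner regularity of capacity on Borel sets (\cite[Theorem 5.1.2(b)]{Ransford_book}), then apply the Arcozzi--Rochberg comparison on the closed set $\phi^{-1}(K)$ together with monotonicity. The only cosmetic difference is that the paper phrases it as a proof by contradiction and invokes the lower bound for $\phi^{-1}$, whereas you argue directly using the upper bound for $\phi$.
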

 \begin{proof}First we observe that, since $\phi$ and $\phi^{-1}$ are homeomorphisms, they take Borel sets to Borel sets. Denoting the capacity of a set $A$ by $\text{cap}(A),$
  let $E$ be a Borel subset of $\mathbb{S}^1$  with $\text{cap}(E)=0.$\\
  Assume, by way of contradiction, that $\text{cap}(\phi(E)) >0$.  Then since
  the capacity of $\phi(E)$ is the supremum of the capacities of all compact subsets $K$ of $\phi(E)$ (see \cite[Theorem 5.1.2(b)]{Ransford_book}), there
  is a compact $K \subseteq \phi(E)$ such that $\text{cap}(K) >0$. Moreover, $\phi^{-1}(K)$ is closed (in fact compact) in $\mathbb{S}^1$, since $\phi^{-1}$ is also a quasisymmetry. Therefore
   the result of Arcozzi and Rochberg mentioned above yields that there is a constant $C>0$ depending
  only on $\phi^{-1}$ such that $\text{cap}(\phi^{-1}(K))\geq \frac{1}{C} \, \text{cap}(K) >0$.
  But since $\phi^{-1}(K)\subseteq E$, Theorem 5.1.2(b) in \cite{Ransford_book} would then imply that $\text{cap}(E)>0$, which is a
  contradiction. Thus $\text{cap}(\phi(E))=0$.
 \end{proof}
\end{subsection}
\begin{subsection}{Harmonic reflection in quasicircles}

 We require the following results characterizing quasisymmetries.  For
 $h \in \mathcal{H}_\pm(\Gamma)$ and a homeomorphism $\phi:\mathbb{S}^1 \rightarrow \mathbb{S}^1$,
 define
  \[  C_\phi h = h \circ \phi.  \]
 If we furthermore assume that $h \circ \phi \in L^1(\mathbb{S}^1)$ then we can define
 \[  \hat{C}_\phi h = h \circ \phi - \frac{1}{2\pi} \int_{\mathbb{S}^1} h \circ \phi(e^{i\theta}) d\theta.  \]

Following S. Nag and D. Sullivan, we define
 \[  \dot{\mathcal{H}}(\mathbb{S}^1) = \left\{ h \in \mathcal{H}(\mathbb{S}^1) \,:\, \hat{h}(0)= 0  \right\}.  \]

We observe that the restriction of the seminorm $\| \cdot \|_{\mathcal{H}(\mathbb{S}^1)}$
to $\dot{\mathcal{H}}(\mathbb{S}^1)$ is a norm.

Moreover, note that the restriction and extension are isometries with respect to $\| \cdot \|_{{\mathcal{H}}(\mathbb{S}^1)}$
  and $\| \cdot \|_{\mathcal{D}_{\mathrm{harm}}(\disk^+)}$, if we assume that the extension is zero at $0$.  We then have the following theorem of Nag-Sullivan \cite{NagSullivan}.
 \begin{theorem}[\cite{NagSullivan}, Theorem 3.1]  \label{th:Nag_Sullivan}
  Let $\phi:\mathbb{S}^1 \rightarrow \mathbb{S}^1$ be a homeomorphism.  Then $\phi$ is a
  quasisymmetry if and only if $h \circ \phi \in \mathcal{H}(\mathbb{S}^1)$
  for all $h \in \mathcal{H}(\mathbb{S}^1)$ and $\hat{C}_\phi:\dot{\mathcal{H}}(\mathbb{S}^1)
   \rightarrow \dot{\mathcal{H}}(\mathbb{S}^1)$ is bounded with respect to
   the norm $\| \cdot \|_{{\mathcal{H}}(\mathbb{S}^1)}$.
 \end{theorem}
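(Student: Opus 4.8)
The plan is to establish the two implications by rather different means. The forward implication will come from the Beurling--Ahlfors quasiconformal extension of $\phi$, the near-invariance of the Dirichlet integral under quasiconformal maps, and the Dirichlet principle. The converse will come from translating boundedness of $C_\phi$ into a two-sided distortion bound for condenser capacities on $\mathbb{S}^1$ --- equivalently, for moduli of quadrilaterals --- and then invoking the classical characterization of quasisymmetries in terms of quadrilateral moduli.

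$(\Rightarrow)$ Assume $\phi$ is a quasisymmetry. By the Beurling--Ahlfors extension theorem it extends to a $K$-quasiconformal self-map $F$ of $\disk^+$, with $K$ depending only on the quasisymmetry constant of $\phi$ and with $F$ restricting to $\phi$ on $\mathbb{S}^1$. Since $\hat{C}_\phi$ annihilates constants and $\|\cdot\|_{\mathcal{H}(\mathbb{S}^1)}$ vanishes on constants, it suffices to bound $\|h\circ\phi\|_{\mathcal{H}(\mathbb{S}^1)}$ by a multiple of $\|h\|_{\mathcal{H}(\mathbb{S}^1)}$. Let $H$ be the complex harmonic extension of $h$ to $\disk^+$ given by Theorem~\ref{th:boundary_values_disk}, and let $H_n$ be the partial sums of the series~(\ref{eq:H_form_Dirichlet}); these are smooth up to $\mathbb{S}^1$, converge to $H$ in $\mathcal{D}_{\mathrm{harm},0}(\disk^+)$, and their boundary values $h_n$ converge to $h$ in $\mathcal{H}_0(\mathbb{S}^1)$. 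Each $H_n\circ F$ extends continuously to the boundary of $\disk^+$ with boundary values $h_n\circ\phi$, and the standard inequality $D(u\circ F)\le K\,D(u)$ for a $K$-quasiconformal map, applied to the real and imaginary parts of $H_n$, gives
\[ \iint_{\disk^+}\bigl(|\partial(H_n\circ F)|^2 + |\overline{\partial}(H_n\circ F)|^2\bigr)\,dA \;\le\; K\iint_{\disk^+}\bigl(|\partial H_n|^2 + |\overline{\partial}H_n|^2\bigr)\,dA. \]
By the Dirichlet principle the complex harmonic extension of $h_n\circ\phi$ has Dirichlet integral no larger than the left-hand side, so Theorem~\ref{th:boundary_values_disk} yields $\|h_n\circ\phi\|_{\mathcal{H}(\mathbb{S}^1)}^2 \le K\|h_n\|_{\mathcal{H}(\mathbb{S}^1)}^2$. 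It remains to pass to the limit: $\{h_n\circ\phi\}$ is bounded in $\mathcal{H}(\mathbb{S}^1)$, hence, after subtracting means, has a weakly convergent subsequence in the Hilbert space $\dot{\mathcal{H}}(\mathbb{S}^1)$; meanwhile $h_n\to h$ quasi-everywhere along a subsequence, and since $\phi^{-1}$ is again a quasisymmetry and hence preserves sets of capacity zero by Theorem~\ref{th:outer_capacity_preserved}, $h_n\circ\phi\to h\circ\phi$ quasi-everywhere, hence almost everywhere. Reconciling the weak limit with the pointwise limit via Mazur's lemma shows that $h\circ\phi$, modulo an additive constant, lies in $\dot{\mathcal{H}}(\mathbb{S}^1)$ with $\|h\circ\phi\|_{\mathcal{H}(\mathbb{S}^1)}^2 \le K\|h\|_{\mathcal{H}(\mathbb{S}^1)}^2$. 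Thus $h\circ\phi\in\mathcal{H}(\mathbb{S}^1)$ and $\|\hat{C}_\phi h\|_{\mathcal{H}(\mathbb{S}^1)} = \|h\circ\phi\|_{\mathcal{H}(\mathbb{S}^1)} \le \sqrt{K}\,\|h\|_{\mathcal{H}(\mathbb{S}^1)}$.

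$(\Leftarrow)$ Conversely, suppose $C_\phi$ maps $\mathcal{H}(\mathbb{S}^1)$ into itself and $\hat{C}_\phi$ is bounded on $\dot{\mathcal{H}}(\mathbb{S}^1)$ with norm $M$; since $h\circ\phi$ and $\hat{C}_\phi h$ differ by a constant, $\|h\circ\phi\|_{\mathcal{H}(\mathbb{S}^1)}\le M\|h\|_{\mathcal{H}(\mathbb{S}^1)}$ for all $h\in\mathcal{H}(\mathbb{S}^1)$. For disjoint closed arcs $E,F\subset\mathbb{S}^1$ set
\[ \Lambda(E,F) = \inf\bigl\{\|h\|_{\mathcal{H}(\mathbb{S}^1)}^2 \,:\, h\in\mathcal{H}(\mathbb{S}^1)\text{ continuous},\ h\equiv 1\text{ near }E,\ h\equiv 0\text{ near }F\bigr\}. \]
Since the complex harmonic extension minimizes the Dirichlet integral for given boundary data, truncation to $[0,1]$ identifies $\Lambda(E,F)$, up to a universal constant, with the condenser capacity of $(E,F)$ in $\disk^+$, equivalently with the reciprocal of the conformal modulus of the quadrilateral whose vertices are the four endpoints of $E$ and $F$; in particular $\Lambda$ is M\"obius invariant and monotone in the cross-ratio of those points. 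Now if $h$ is a continuous competitor for $\Lambda(\phi(E),\phi(F))$, then $h\circ\phi$ is continuous, equals $1$ near $E$ and $0$ near $F$ because $\phi$ is a homeomorphism, lies in $\mathcal{H}(\mathbb{S}^1)$ by hypothesis, and satisfies $\|h\circ\phi\|_{\mathcal{H}(\mathbb{S}^1)}^2 \le M^2\|h\|_{\mathcal{H}(\mathbb{S}^1)}^2$; taking the infimum over $h$ gives $\Lambda(E,F)\le M^2\,\Lambda(\phi(E),\phi(F))$ for all such $E,F$. Applying this to the complementary pair of arcs --- the other two sides of the same quadrilateral --- and using that the moduli of a quadrilateral and of its conjugate are reciprocal, one gets also $\Lambda(\phi(E),\phi(F))\le M^2\,\Lambda(E,F)$, so that $\phi$ distorts the modulus of every quadrilateral by at most the factor $M^2$. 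By the classical characterization of quasisymmetries --- equivalently, of boundary correspondences of quasiconformal self-maps of $\disk^+$ --- in terms of quadrilateral moduli, $\phi$ is a quasisymmetry.

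I expect the main obstacle to be the two places where the function-space hypotheses must be converted into geometric ones, because in neither direction is a naive change of variables available: $\phi$ is only quasisymmetric, hence in general not absolutely continuous, so in the forward direction one cannot simply pull $H$ back by $F$ and read off its boundary values, and the limiting argument --- weak compactness in $\dot{\mathcal{H}}(\mathbb{S}^1)$ together with the capacity-preservation of Theorem~\ref{th:outer_capacity_preserved} --- is the delicate technical step. In the reverse direction, the identification of $\Lambda(E,F)$ with a quadrilateral modulus and the upgrading of the one-sided capacity estimate to a two-sided modulus-distortion bound via the conjugate quadrilateral form the conceptual core; once these are in place, the Beurling--Ahlfors extension, the Dirichlet principle, and the modulus characterization of quasisymmetries supply everything else.
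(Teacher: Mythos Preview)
The paper does not prove this theorem; it is quoted from Nag and Sullivan \cite{NagSullivan} and used as a black box (see the remark following the statement). Your proposal therefore supplies an argument where the paper gives none, and the relevant comparison is with the original source rather than with the present paper.

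Your argument is correct in outline and close in spirit to the original. For the forward direction, Beurling--Ahlfors extension followed by the quasiconformal distortion bound on the Dirichlet integral and the Dirichlet principle is exactly the standard route; the only part that needs care is the passage to the limit. Your Mazur-lemma reconciliation does work, but you should make explicit that the finite convex combinations of the $h_n\circ\phi$ (with indices tending to $\infty$) still converge q.e.\ to $h\circ\phi$, while after subtracting their means they converge strongly in $\dot{\mathcal{H}}(\mathbb{S}^1)$, hence a.e.\ along a further subsequence, to the weak limit $g$; subtracting shows that $h\circ\phi-g$ agrees a.e.\ with a sequence of constants, hence is constant. Your appeal to Theorem~\ref{th:outer_capacity_preserved} is legitimate here, since that result is proved independently via Arcozzi--Rochberg and there is no circularity.

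For the converse, your capacity/modulus argument is a clean variant of the original. Two points to tighten: (i) the identification of $\Lambda(E,F)$ with the condenser capacity (equivalently, the reciprocal of the quadrilateral modulus) uses the Dirichlet principle in the form that the harmonic extension minimizes among all $W^{1,2}(\disk^+)$ competitors with the same trace, together with a short approximation to pass between ``$\equiv 1$ near $E$'' and ``$=1$ on $E$'' for closed arcs; (ii) the conjugate-quadrilateral trick indeed upgrades the one-sided bound $\Lambda(E,F)\le M^2\Lambda(\phi(E),\phi(F))$ to a two-sided modulus distortion bound, after which the classical modulus characterization of quasisymmetric boundary maps finishes the job. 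With these details supplied, the proof is complete.
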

 \begin{remark}  The statement of the Theorem 3.1 in \cite{NagSullivan} omitted
 the boundedness condition in one direction, which was used in their proof. We have thus re-worded the statement of their theorem slightly.
 \end{remark}
 \begin{remark} \label{re:Nag_Sullivan_equivalent}  An equivalent way to state the theorem
  is that a homeomorphism  $\phi$ is a
  quasisymmetry if and only if $h \circ \phi \in \mathcal{H}(\mathbb{S}^1)$
  for all $h \in \mathcal{H}(\mathbb{S}^1)$ and $C_\phi$ is bounded with respect to
   the semi-norm $\| \cdot \|_{{\mathcal{H}}(\mathbb{S}^1)}$.
 \end{remark}

 \begin{theorem} \label{th:quasisymmetry_composition}
  Let $\phi:\mathbb{S}^1 \rightarrow \mathbb{S}^1$ be a homeomorphism.
  Then $\phi$ is a quasisymmetry if and only if $C_\phi$ takes $\mathcal{H}(\mathbb{S}^1)$ into $\mathcal{H}(\mathbb{S}^1)$
  and is bounded with respect to $\| \cdot \|_{\mathcal{H}_0(\mathbb{S}^1)}$.
 \end{theorem}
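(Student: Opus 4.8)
The plan is to deduce both directions from the Nag--Sullivan theorem (Theorem~\ref{th:Nag_Sullivan}, in the form of Remark~\ref{re:Nag_Sullivan_equivalent}) together with the capacity--preservation property of quasisymmetries (Theorem~\ref{th:outer_capacity_preserved}). Throughout it is convenient to split $h\in\mathcal{H}(\mathbb{S}^1)$ as $h=\hat h(0)+h_0$ with $h_0\in\dot{\mathcal{H}}(\mathbb{S}^1)$, and to recall that $\|h\|_{\mathcal{H}_0(\mathbb{S}^1)}^2=|\hat h(0)|^2+\|h_0\|_{\mathcal{H}(\mathbb{S}^1)}^2$, that $C_\phi$ fixes constants, and that the two norms $\|\cdot\|_{\mathcal{H}(\mathbb{S}^1)}$ and $\|\cdot\|_{\mathcal{H}_0(\mathbb{S}^1)}$ coincide on $\dot{\mathcal{H}}(\mathbb{S}^1)$. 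Granting this, the ``if'' direction is immediate: if $C_\phi$ maps $\mathcal{H}(\mathbb{S}^1)$ into itself and $\|C_\phi h\|_{\mathcal{H}_0(\mathbb{S}^1)}\le M\|h\|_{\mathcal{H}_0(\mathbb{S}^1)}$, then for $h\in\dot{\mathcal{H}}(\mathbb{S}^1)$ one gets $\|C_\phi h\|_{\mathcal{H}(\mathbb{S}^1)}\le M\|h\|_{\mathcal{H}(\mathbb{S}^1)}$, and for general $h$ the semi-norm kills the constant in $C_\phi h=\hat h(0)+C_\phi h_0$, so the same bound holds; Remark~\ref{re:Nag_Sullivan_equivalent} then shows $\phi$ is a quasisymmetry.

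For the ``only if'' direction assume $\phi$ is a quasisymmetry, so by Theorem~\ref{th:Nag_Sullivan} and Remark~\ref{re:Nag_Sullivan_equivalent} the operator $C_\phi$ maps $\mathcal{H}(\mathbb{S}^1)$ into itself with $\|C_\phi h\|_{\mathcal{H}(\mathbb{S}^1)}\le M\|h\|_{\mathcal{H}(\mathbb{S}^1)}$, and $\hat C_\phi$ is bounded on $\dot{\mathcal{H}}(\mathbb{S}^1)$. Using $\|C_\phi h\|_{\mathcal{H}_0(\mathbb{S}^1)}^2=|\widehat{C_\phi h}(0)|^2+\|C_\phi h\|_{\mathcal{H}(\mathbb{S}^1)}^2$, the decomposition $\widehat{C_\phi h}(0)=\hat h(0)+\widehat{C_\phi h_0}(0)$, and $|\hat h(0)|\le\|h\|_{\mathcal{H}_0(\mathbb{S}^1)}$, the whole statement reduces to the single estimate
\[
\bigl|\widehat{C_\phi h_0}(0)\bigr|=\Bigl|\frac{1}{2\pi}\int_{\mathbb{S}^1}(h_0\circ\phi)(e^{i\theta})\,d\theta\Bigr|\le C\,\|h_0\|_{\mathcal{H}(\mathbb{S}^1)},\qquad h_0\in\dot{\mathcal{H}}(\mathbb{S}^1).
\]

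I would establish this by considering the linear functional $T\colon\dot{\mathcal{H}}(\mathbb{S}^1)\to\mathbb{C}$, $Th_0=\widehat{C_\phi h_0}(0)$, which is everywhere defined because $C_\phi h_0=h_0\circ\phi\in\mathcal{H}(\mathbb{S}^1)\subset L^2(\mathbb{S}^1)$, and showing that it has closed graph; the closed graph theorem then yields the desired bound. For closedness, suppose $h_n\to 0$ in $\dot{\mathcal{H}}(\mathbb{S}^1)$ and $Th_n\to c$. Writing $C_\phi h_n=\hat C_\phi h_n+Th_n$ and using boundedness of $\hat C_\phi$ on $\dot{\mathcal{H}}(\mathbb{S}^1)$ (so $\hat C_\phi h_n\to 0$ there, hence in $\mathcal{H}_0(\mathbb{S}^1)$, while the constants $Th_n\to c$), we get $C_\phi h_n\to c$ in $\mathcal{H}_0(\mathbb{S}^1)$, hence in $L^2(\mathbb{S}^1)$, so after passing to a subsequence $h_n\circ\phi\to c$ almost everywhere on $\mathbb{S}^1$. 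On the other hand $h_n\to 0$ in $\mathcal{H}(\mathbb{S}^1)$, so after passing to a further subsequence the quasicontinuous representatives of the $h_n$ converge to $0$ off a Borel set $E\subset\mathbb{S}^1$ of logarithmic capacity zero (the standard capacitary weak--type estimate for the Dirichlet space, together with Remark~\ref{re:outer_cap_contained} to make $E$ Borel). Since $\phi^{-1}$ is also a quasisymmetry, Theorem~\ref{th:outer_capacity_preserved} gives that $\phi^{-1}(E)$ has capacity zero, hence Lebesgue measure zero, so $h_n\circ\phi\to 0$ almost everywhere as well. Comparing the two limits forces $c=0$, so $T$ is closed and therefore bounded.

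The genuinely delicate point is exactly this boundedness of $T$ --- equivalently, the statement that the push--forward of normalized arclength measure under a quasisymmetry has finite logarithmic energy --- and the argument above is arranged so as to extract it from the Nag--Sullivan theorem via the closed graph theorem rather than by a direct potential--theoretic estimate. The only ingredient used beyond results quoted in the text is the classical fact that a sequence converging in the Dirichlet space has a subsequence converging quasi--everywhere on $\mathbb{S}^1$ with respect to logarithmic capacity (see \cite{Ransford_book}); everything else is bookkeeping with the splitting $h=\hat h(0)+h_0$.
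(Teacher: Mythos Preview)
Your ``if'' direction coincides with the paper's. For the ``only if'' direction the paper does not argue at all but simply cites \cite[Theorem~2.7]{SchippersStaubach_Proc}; you instead give a self-contained proof: from the Nag--Sullivan bounds on the semi-norm and on $\hat C_\phi$ you isolate the functional $T h_0 = \widehat{C_\phi h_0}(0)$ on $\dot{\mathcal{H}}(\mathbb{S}^1)$ and close its graph by playing $L^2$ subsequential convergence of $C_\phi h_n$ against quasi-everywhere subsequential convergence of $h_n$, using Theorem~\ref{th:outer_capacity_preserved} to transport the capacity-zero exceptional set through $\phi$. The argument is correct (note that $\dot{\mathcal H}(\mathbb S^1)$ with $\|\cdot\|_{\mathcal H}$ is complete, so the closed graph theorem applies). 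It has the merit of staying within the paper's framework---modulo the one standard fact about q.e.\ subsequential convergence in the Dirichlet space, which incidentally is in \cite{El-Fallah_etal_primer} rather than \cite{Ransford_book}---and of making transparent that the entire gap between the $\mathcal H$- and $\mathcal H_0$-boundedness statements is the single estimate on the mean, equivalently that the quasisymmetric push-forward of arclength has finite logarithmic energy. The paper's route is shorter on the page but black-boxes exactly this point.
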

 \begin{proof}  The fact that $\phi$ a quasisymmetry implies that $C_\phi$ is a bounded map
  into $\mathcal{H}(\mathbb{S}^1)$ was proven by the authors in
  \cite[Theorem 2.7]{SchippersStaubach_Proc}.

  To prove the converse, assume that $C_\phi$ is bounded from
  $\mathcal{H}(\mathbb{S}^1)$ into $\mathcal{H}(\mathbb{S}^1)$ with respect to $\| \cdot \|_{\mathcal{H}_0(\mathbb{S}^1)}$.
  Since $\mathcal{H}(\mathbb{S}^1) \subset L^2(\mathbb{S}^1) \subset L^1(\mathbb{S}^1)$
  the average
  \[  \frac{1}{2\pi} \int_{\mathbb{S}^1} h \circ \phi(e^{i\theta}) d\theta  \]
  is defined for any $h \in \mathcal{H}(\mathbb{S}^1)$.
  Thus for any $h\in \dot{\mathcal{H}}(\mathbb{S}^1)$ we can meaningfully make the
  following estimate:
  \begin{align*}
   \| \hat{C}_\phi h \|_{{\mathcal{H}}(\mathbb{S}^1)}
         & =  \| C_\phi h  \|_{{\mathcal{H}}(\mathbb{S}^1)}
         \leq \| C_\phi h  \|_{\mathcal{H}_0(\mathbb{S}^1)} \\
         & \leq M \| h   \|_{\mathcal{H}_0(\mathbb{S}^1)} = M \| h   \|_{\mathcal{H}(\mathbb{S}^1)}.
  \end{align*}
  Thus the claim follows from Theorem \ref{th:Nag_Sullivan}.
 \end{proof}
 Theorem \ref{th:quasisymmetry_composition} is a strengthening of Theorem \ref{th:Nag_Sullivan}
 in one direction and a weakening in the other.

 This leads to the following interesting characterization of quasicircles. Let
  $\mathcal{D}_{\mathrm{harm}}(\Omega^+)_{r}$ temporarily denote the set of $h \in \mathcal{D}_{\mathrm{harm}}(\Omega^+)$
  whose boundary values are equal to an element of $\mathcal{H}_-(\Gamma)$ almost everywhere
  with respect to harmonic measure induced by $\Omega^-$.  We have a well-defined
  ``reflection'' $\mathfrak{R}^{+-}(\Gamma): \mathcal{D}_{\mathrm{harm}}(\Omega^+)_{r} \rightarrow \mathcal{D}_{\mathrm{harm}}(\Omega^-)$
  given by letting $\mathfrak{R}^{+-}(\Gamma) h$ be the unique harmonic function in $\mathcal{D}_{\mathrm{harm}}(\Omega^-)$ whose boundary
  values equal those of $h$ almost everywhere.  We can also define a reflection $\mathfrak{R}^{-+}(\Gamma)$ similarly.
 \begin{theorem} \label{th:quasicircle_characterization}
  Let $\Gamma$ be a Jordan curve and let $\Omega^\pm$ denote
  the complements of $\Gamma$ in $\sphere$.  The following are equivalent:
  \begin{enumerate}
   \item for all $h \in \mathcal{D}_{\mathrm{harm}}(\Omega^+)$, the boundary values of $h$ are in $\mathcal{H}_-(\Gamma)$,
    and $\mathfrak{R}^{+-}(\Gamma):\mathcal{D}_{\mathrm{harm}}(\Omega^+) \rightarrow \mathcal{D}_{\mathrm{harm}}(\Omega^-)$ is a bounded
    operator;
   \item for all $h \in \mathcal{D}_{\mathrm{harm}}(\Omega^-)$, the boundary values of $h$ are in $\mathcal{H}_+(\Gamma)$,
    and $\mathfrak{R}^{-+}(\Gamma):\mathcal{D}_{\mathrm{harm}}(\Omega^-) \rightarrow \mathcal{D}_{\mathrm{harm}}(\Omega^+)$ is a bounded
    operator;
   \item  $\Gamma$ is a quasicircle.
  \end{enumerate}

 Finally, if $\Gamma$ is a quasicircle, then for any $h \in \mathcal{D}_{\text{harm}}(\Omega^+)$,
  the boundary values of $\mathfrak{R}^{+-}(\Gamma) h$ exist and agree with those of $h$, except on
  a set $K$, which is simultaneously the image of a subset of $\mathbb{S}^1$ of  capacity zero under a conformal map $f:\disk^+ \rightarrow \Omega^+$,
  and the image of a subset of $\mathbb{S}^1$ of capacity zero under
  a conformal map $g:\disk^- \rightarrow \Omega^-$.
  In particular, $K$ is a set of harmonic measure zero with respect to both $\Omega^+$ and
  $\Omega^-$.

  The same claim holds for $h \in \mathcal{D}_{\text{harm}}(\Omega^-)$ and $\mathfrak{R}^{-+}(\Gamma)h$.
 \end{theorem}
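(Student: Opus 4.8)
The plan is to transport everything to the disk via the Osgood--Carath\'eodory boundary homeomorphisms and then invoke the Nag--Sullivan circle theory. Fix a conformal map $f:\disk^+\to\Omega^+$ with $f(0)=p$ and a conformal map $g:\disk^-\to\Omega^-$; both extend to homeomorphisms of the closures, and I would write $\phi=f^{-1}\circ g:\mathbb{S}^1\to\mathbb{S}^1$ for the conformal welding of $\Gamma$, so that $f\circ\phi=g$ on $\mathbb{S}^1$. The classical input here is that $\Gamma$ is a quasicircle if and only if $\phi$ is a quasisymmetry (a consequence of the Beurling--Ahlfors extension \cite{BeurlingAhlfors_boundary} together with the existence of a quasiconformal reflection in a quasicircle).

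First I would set up a conjugation dictionary. Given $h\in\mathcal{D}_{\mathrm{harm}}(\Omega^+)$, conformal invariance of the seminorm puts $h\circ f$ in $\mathcal{D}_{\mathrm{harm}}(\disk^+)$; by Theorem~\ref{th:boundary_values_disk} it has boundary values $b:=(h\circ f)|_{\mathbb{S}^1}\in\mathcal{H}(\mathbb{S}^1)$, and by Remark~\ref{re:conformal_invariance_limits} (and the proof of Theorem~\ref{Osborns sats}) the Osborn boundary values of $h$ on $\Gamma$ are $b\circ f^{-1}$. Pulling these back by $g$ and using the $\disk^-$ version of Theorem~\ref{th:boundary_values_disk} (Remark~\ref{re:equal_on_Dplusminus}), they lie in $\mathcal{H}_-(\Gamma)$ precisely when $C_\phi b=b\circ f^{-1}\circ g\in\mathcal{H}(\mathbb{S}^1)$, and in that case $\mathfrak{R}^{+-}(\Gamma)h$ is $g$-conjugate to the Dirichlet extension of $C_\phi b$. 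Since restriction and extension are bounded in both directions for the seminorms $\|\cdot\|_{\mathcal{H}(\mathbb{S}^1)}$ and $\|\cdot\|_{\mathcal{D}_{\mathrm{harm}}}$ (Theorem~\ref{th:boundary_values_disk}) and conformal pullback is seminorm-isometric, this shows that condition (1) is equivalent to: $C_\phi$ maps $\mathcal{H}(\mathbb{S}^1)$ into itself and is bounded with respect to $\|\cdot\|_{\mathcal{H}(\mathbb{S}^1)}$. By Remark~\ref{re:Nag_Sullivan_equivalent} (the restated Nag--Sullivan Theorem~\ref{th:Nag_Sullivan}) this in turn is equivalent to $\phi$ being a quasisymmetry, hence by the welding characterization to $\Gamma$ being a quasicircle; this gives (1)$\Leftrightarrow$(3). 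Exchanging the roles of $f$ and $g$ exhibits $\mathfrak{R}^{-+}(\Gamma)$ as conjugate to $C_{\phi^{-1}}$, so (2) is equivalent to $\phi^{-1}$ being a quasisymmetry, which is equivalent to $\phi$ being one, hence to (3).

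For the final statement, assume $\Gamma$ is a quasicircle, so $\phi$ is a quasisymmetry, and fix $h\in\mathcal{D}_{\mathrm{harm}}(\Omega^+)$. By Theorem~\ref{th:boundary_values_disk} and Remark~\ref{re:outer_cap_contained}, $b$ has radial limits off a Borel set $E_1\subset\mathbb{S}^1$ of capacity zero, and $C_\phi b$ has radial limits in $\disk^-$ off a Borel set $E_2\subset\mathbb{S}^1$ of capacity zero. Chasing the dictionary, on $\mathbb{S}^1\setminus(E_2\cup\phi^{-1}(E_1))$ the $g$-pullback of the boundary value of $\mathfrak{R}^{+-}(\Gamma)h$ equals $b\circ\phi$, which there equals the $g$-pullback of the boundary value of $h$, using $f\circ\phi=g$. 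Setting $K:=g(E_2\cup\phi^{-1}(E_1))$, Theorem~\ref{th:outer_capacity_preserved} applied to $\phi$ and $\phi^{-1}$ shows that $\phi(E_2)$ and $\phi^{-1}(E_1)$ are Borel of capacity zero, so that $K=g(E_2\cup\phi^{-1}(E_1))=f(E_1\cup\phi(E_2))$ is simultaneously the $f$-image of a capacity-zero Borel set and the $g$-image of a capacity-zero Borel set; off $K$ both $h$ and $\mathfrak{R}^{+-}(\Gamma)h$ have boundary values, and they agree. Since a capacity-zero Borel subset of $\mathbb{S}^1$ has zero harmonic measure (\cite[Theorem 4.3.6]{Ransford_book}) and harmonic measure is conformally natural (\cite[I.3]{Garnett_Marshall}), $K$ has harmonic measure zero for both $\Omega^\pm$. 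The case $h\in\mathcal{D}_{\mathrm{harm}}(\Omega^-)$ is symmetric.

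The hard part, I expect, will be the bookkeeping in the last paragraph --- producing one exceptional set $K$ that is at once an $f$-image and a $g$-image of capacity-zero Borel sets, which is exactly where preservation of such sets by the welding (Theorem~\ref{th:outer_capacity_preserved}) is essential --- together with making the seminorm-versus-norm passages in the dictionary fully precise: the reflection is only assumed bounded for the $\mathcal{D}_{\mathrm{harm}}$ seminorms, while the handiest form of Nag--Sullivan concerns the $\mathcal{H}_0$-norm on $\dot{\mathcal{H}}(\mathbb{S}^1)$, so one should route the argument through Theorem~\ref{th:quasisymmetry_composition} and Remark~\ref{re:Nag_Sullivan_equivalent}.
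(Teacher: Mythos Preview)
Your proposal is correct and follows essentially the same route as the paper's proof: conjugate the reflection to a composition operator $C_\phi$ on $\mathcal{H}(\mathbb{S}^1)$ via the Riemann maps, invoke the Nag--Sullivan characterization in its seminorm form (Remark~\ref{re:Nag_Sullivan_equivalent} and Theorem~\ref{th:quasisymmetry_composition}), and track the exceptional set using Theorem~\ref{th:outer_capacity_preserved}. The only notable difference is that where you cite the equivalence ``$\phi$ is a quasisymmetry $\Leftrightarrow$ $\Gamma$ is a quasicircle'' as a known consequence of Beurling--Ahlfors extension and quasiconformal reflection, the paper supplies the welding argument explicitly (constructing a quasiconformal $\Phi:\sphere\to\sphere$ carrying a quasicircle to $\Gamma$), remarking that this is a consequence of the \emph{proof} of conformal welding rather than its usual statement.
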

 \begin{proof}
  First, observe that all three conditions hold for $\Gamma= \mathbb{S}^1$ by Theorem
  \ref{th:boundary_values_disk} and Remark \ref{re:equal_on_Dplusminus}.  We will
  use this repeatedly in what follows.

  Assume that $\Gamma$ is a quasicircle.   Let $f:\disk^+ \rightarrow \Omega^+$
  and $g:\disk^- \rightarrow \Omega^-$ be conformal maps, which must have quasiconformal extensions
  to $\sphere$.  Setting $\phi = g^{-1} \circ f$, we have that $\phi^{-1}$ is a quasisymmetry.
  Now let $h \in \mathcal{H}_+(\Gamma)$ and let $H_+$ be its harmonic extension  in $\mathcal{D}_{\text{harm}}(\Omega^+)$.
  We will show that $h$ has an extension $H_- \in \mathcal{D}_{\text{harm}}(\Omega^-)$ and that the map $H_+ \mapsto H_-$ is bounded from
 $\mathcal{D}_{\mathrm{harm}}(\Omega^+)$ to $\mathcal{D}_{\mathrm{harm}}(\Omega^-)$.

 Denote the extension from $\mathcal{H}(\mathbb{S}^1)$
  to $\mathcal{D}_{\mathrm{harm}}(\disk^\pm)$ by $e_\pm$ and the map from $\mathcal{D}_{\mathrm{harm}}(\disk^\pm)$ to the boundary values
   in $\mathcal{H}(\mathbb{S}^1)$ by $r_\pm$.  We have that $r_\pm$ and $e_\pm$ are bounded maps
   with respect to the ${\mathcal{H}}(\mathbb{S}^1)$ norm.
  Denote also
  \begin{align*}
   C_f: \mathcal{D}_{\mathrm{harm}}(\Omega^+) & \rightarrow \mathcal{D}_{\mathrm{harm}}(\disk^+) \\
    h & \mapsto h \circ f
  \end{align*}
  and
  \begin{align*}
   C_{g^{-1}}: \mathcal{D}_{\mathrm{harm}}(\disk^-) & \rightarrow \mathcal{D}_{\mathrm{harm}}(\Omega^-) \\
   h \mapsto h \circ g^{-1}
  \end{align*}
  which preserve the semi-norm by change of variables.
  By Theorem \ref{th:quasisymmetry_composition}, we then have that
  $C_{g^{-1}} e_- C_{\phi^{-1}} r_+ C_f H_+$
  is in $\mathcal{D}_{\mathrm{harm}}(\Omega^-)$ and the map $C_{g^{-1}} e_- C_{\phi^{-1}} r_+ C_f $ is bounded.

Now set $H_- = C_{g^{-1}} e_- C_{\phi^{-1}} r_+ C_f H_+$; it
  remains to be shown that the boundary values of $H_-$ and $h$ agree except on a
  set $K$ with the specified properties.

  First observe that the boundary values $\widetilde{H_+ \circ f}$ of $H_+ \circ f \in \mathcal{D}_{\text{harm}}(\disk^+)$
  exist in $\mathcal{H}(\mathbb{S}^1)$ in the sense of Osborn, except on a
  Borel set $E$ of capacity zero, by Theorem \ref{th:boundary_values_disk} and
  Remark \ref{re:outer_cap_contained}.
 By Theorem \ref{th:outer_capacity_preserved}, $\phi(E)$ also has capacity zero, since $\phi$ is a quasisymmetry. Now consider $\widetilde{H_+ \circ f} \circ \phi^{-1}$.  By
  Theorem \ref{th:quasisymmetry_composition}, this is in $\mathcal{H}(\mathbb{S}^1)$,
  and by Theorem \ref{th:boundary_values_disk} it has an extension to a function $G \in \mathcal{D}_{\text{harm}}(\disk^-)$ (say) whose boundary values
  agree with $\widetilde{H_+ \circ f} \circ \phi^{-1}$
  except on a Borel set $F$ of capacity zero in $\mathbb{S}^1$.
  If we set $I= F \cup \phi(E)$, then we have that $I$ has zero capacity. To see this, we observe that since both $F$ and $\phi(E)$ have capacity zero and are both bounded Borel sets, then their outer capacity which is by Choquet's theorem \cite{Choquet} equal to their capacity, is also equal to zero. Therefore using the subadditivity of the outer capacity under countable unions \cite[Theorem 2.1.9]{El-Fallah_etal_primer} and Choquet's theorem, we have that $\text{cap}( I)=0$.


  Now if we set $K=g(I)$, then $K$ is the image of a set of
   capacity zero under $g$, and the function $G \circ g^{-1} = H_-$ is in
  $\mathcal{D}_{\text{harm}}(\Omega^-)$.  Furthermore the boundary values agree
  with $\widetilde{H_+ \circ f} \circ \phi^{-1} \circ g^{-1} = H_+$ on $\Gamma \backslash g(I) = \Gamma \backslash K$
  (see Remark \ref{re:conformal_invariance_limits}).
   On the other hand, $K=g(I) = g(F) \cup g(\phi(E)) = f(\phi^{-1}(F)) \cup
  f(E)= f(\phi^{-1}(F) \cup E)$.  Since $\phi^{-1}$ is also a quasisymmetry, by
  the argument applied above $I'=\phi^{-1}(F) \cup E$ has capacity zero and hence
  $K = f(I')$ is the image of a subset of $\mathbb{S}^1$ of capacity zero under $f$ as claimed.

  As we mentioned in the proof of Theorem \ref{Osborns sats}, subsets of $\mathbb{S}^1$ of logarithmic capacity zero are null-sets with respect to the harmonic measure. Therefore  \cite[I.3, equation (3.3)]{Garnett_Marshall} yields that $K=g(I)$ has harmonic measure zero with respect to
  $\Omega^-$ and since $K=f(I')$, it also has harmonic measure zero with respect
  to $\Omega^+$.
  Thus we have shown that (3) implies (1), and also that (3) implies the final claim. The proof that (3) implies (2) and the final claim is similar, hence omitted.  Thus, once
  the equivalence of (1), (2), and (3) is demonstrated, the proof of the theorem
  will be complete.

  We show that (1) implies (3); the proof that (2) implies (3) is similar.   Assuming (1), we
  define the bounded reflection $\mathfrak{R}^{+-}(\Gamma): \mathcal{D}_{\mathrm{harm}}(\Omega^+) \rightarrow \mathcal{D}_{\mathrm{harm}}(\Omega^-)$.  Let $f$ and $g$ be conformal maps of $\disk^+$
  and $\disk^-$ onto $\Omega^+$ and $\Omega^-$ respectively. By the Osgood-Carath\'eodory theorem,
  $f$ and $g$ extend to homeomorphisms of $\mathbb{S}^1$ to $\Gamma$.  Thus we may define
  a homeomorphism $\phi:\mathbb{S}^1 \rightarrow \mathbb{S}^1$ by $\phi = g^{-1} \circ f$.
  For any $H_+ \in \mathcal{D}_{\mathrm{harm}}(\disk^+)$, $C_g \mathfrak{R}^{+-}(\Gamma) C_{f^{-1}} H_+$ is in $\mathcal{D}_{\mathrm{harm}}(\disk^-)$.
  Furthermore, for any $h \in \mathcal{H}(\mathbb{S}^1)$, by comparing boundary values as above,
  $C_{\phi^{-1}} h = r_- C_g
  \mathfrak{R}^{+-}(\Gamma) C_{f^{-1}} e_+ h$.  Thus
  \[  C_{\phi^{-1}} = r_- C_g  \mathfrak{R}^{+-}(\Gamma) C_{f^{-1}} e_+  \]
  and since all maps on the right hand side are bounded, we conclude that $C_{\phi^{-1}}$ is
  a bounded operator on $\mathcal{H}(\mathbb{S}^1)$ with respect to the semi-norm. So by
  Theorem \ref{th:Nag_Sullivan} (see Remark \ref{re:Nag_Sullivan_equivalent})
  we have that $\phi$ is a quasisymmetry.

  This implies that $g$ and $f$ have quasiconformal
  extensions to $\sphere$.  This is a consequence of the proof of the conformal welding theorem, but
  not the statement, so we supply the argument.  We will use the following fact:
  given a quasicircle $\gamma \subset \sphere$, a
  continuous map $\Phi$ of $\sphere$ which is quasiconformal on the complements of $\gamma$ is
  quasiconformal on $\sphere$.

  Let $w_\mu:\disk^- \rightarrow \disk^-$ be a quasiconformal extension of $\phi$, which exists
  by the Beurling-Ahlfors extension theorem.  Let $\mu$ be the Beltrami differential of $w_\mu$.
  Let $w^\mu:\sphere \rightarrow \sphere$ be a solution to the Beltrami equation with
  Beltrami differential $\mu$ on $\disk^-$ and $0$ on $\disk^+$.  Define the map
  $\Phi: \sphere \rightarrow \sphere$ to be the continuous extension of
  \[  \Phi(z) = \left\{ \begin{array}{cc} g \circ w_\mu \circ (w^\mu)^{-1}(z) & z \in w^\mu(\disk^-) \\
       f \circ (w^\mu)^{-1}(z) & z \in w^\mu(\disk^+).  \end{array}  \right.  \]
  A continuous extension exists, since $(w^\mu)^{-1}$ is continuous
  on the omitted set $\gamma = w^\mu(\mathbb{S}^1)$ (which is a quasicircle), and $g \circ w_\mu = f$
  on $\mathbb{S}^1$ by definition of $w_\mu$.  Thus $\Phi$ is quasiconformal, and since $\gamma$ is a
  quasicircle, $\Gamma = \Phi(\gamma)$ is also a quasicircle.  This completes the proof.
 \end{proof}

 \begin{remark}  Note that we do not claim that the quasisymmetry $\phi$ in
 the proof takes sets of harmonic measure zero to sets of harmonic measure zero.
 We are grateful to the referee for drawing our attention to this subtlety.
 \end{remark}

 The following immediate consequence deserves to be singled out, since it will allow us to
 consistently define the set of boundary values of harmonic functions of finite Dirichlet energy.
 \begin{corollary}
  Let $\Gamma \subset \sphere$ be a quasicircle.  Then $\mathcal{H}_+(\Gamma) = \mathcal{H}_-(\Gamma)$.
 \end{corollary}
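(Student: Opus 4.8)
The plan is to read this off directly from Theorem~\ref{th:quasicircle_characterization}, of which it is a repackaging. Since $\Gamma$ is a quasicircle, condition (3) of that theorem holds, and hence so do conditions (1) and (2). Condition (1) asserts that the boundary values (in the sense of Osborn) of every $h \in \mathcal{D}_{\mathrm{harm}}(\Omega^+)$ lie in $\mathcal{H}_-(\Gamma)$. By Definition~\ref{de:Osborn_space}, $\mathcal{H}_+(\Gamma)$ is precisely the set of all such boundary values, so condition (1) says exactly that $\mathcal{H}_+(\Gamma) \subseteq \mathcal{H}_-(\Gamma)$. Symmetrically, condition (2) of Theorem~\ref{th:quasicircle_characterization} gives the reverse inclusion $\mathcal{H}_-(\Gamma) \subseteq \mathcal{H}_+(\Gamma)$. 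Combining the two inclusions yields $\mathcal{H}_+(\Gamma) = \mathcal{H}_-(\Gamma)$, which is the claim. So the bulk of the work is simply to cite the correct clauses of the theorem and match them against the definition of $\mathcal{H}_\pm(\Gamma)$.

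The single point deserving a word of care, rather than an obstacle, is that elements of $\mathcal{H}_\pm(\Gamma)$ are defined only up to null sets, and a priori one must ask whether "null" here refers to harmonic measure induced by $\Omega^+$ or by $\Omega^-$, these being possibly mutually singular. This is handled by the final clause of Theorem~\ref{th:quasicircle_characterization}: for $h \in \mathcal{D}_{\mathrm{harm}}(\Omega^+)$, the boundary values of $\mathfrak{R}^{+-}(\Gamma)h$ agree with those of $h$ off a set $K$ which is simultaneously the image of a set of capacity zero under a conformal map $f:\disk^+ \rightarrow \Omega^+$ and under a conformal map $g:\disk^- \rightarrow \Omega^-$, hence of harmonic measure zero with respect to both $\Omega^+$ and $\Omega^-$. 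Thus the identification of a function in $\mathcal{D}_{\mathrm{harm}}(\Omega^+)$ with its reflection in $\mathcal{D}_{\mathrm{harm}}(\Omega^-)$ is consistent no matter which harmonic measure is used to pass to equivalence classes, and the equality $\mathcal{H}_+(\Gamma) = \mathcal{H}_-(\Gamma)$ holds on the nose. No further argument is required.
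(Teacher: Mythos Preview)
Your argument is correct and matches the paper's approach: the corollary is stated there as an ``immediate consequence'' of Theorem~\ref{th:quasicircle_characterization} with no separate proof, and you have simply spelled out that immediate consequence, including the care about null sets handled by the final clause of the theorem.
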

 That is, $\mathcal{D}_{\mathrm{harm}}(\Omega^+)$ and $\mathcal{D}_{\mathrm{harm}}(\Omega^-)$
 have the same boundary values.

 Theorem \ref{th:quasicircle_characterization} also holds for the pointed norms.
 \begin{theorem}  Let $\Gamma$ be a closed Jordan curve bounding $\Omega^\pm$.
   For any fixed $p^\pm \in \Omega^\pm$, the maps
    $\mathfrak{R}^{+-}(\Gamma):\mathcal{D}_{\mathrm{harm}, p^{+}}(\Omega^+) \rightarrow \mathcal{D}_{\mathrm{harm},  p^{-}}(\Omega^-)$
     and $\mathfrak{R}^{-+}(\Gamma):\mathcal{D}_{\mathrm{harm}, p^{-}}(\Omega^-) \rightarrow \mathcal{D}_{\mathrm{harm},p^{+}}(\Omega^+)$
     are bounded if and only if $\Gamma$ is a quasicircle.
 \end{theorem}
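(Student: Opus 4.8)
The plan is to deduce the statement from the semi-norm version in Theorem~\ref{th:quasicircle_characterization} by handling the extra point-evaluation term separately. Since $\| h \|^2_{\mathcal{D}_{\mathrm{harm},p}(\Omega)} = |h(p)|^2 + \| h \|^2_{\mathcal{D}_{\mathrm{harm}}(\Omega)}$, the pointed norm dominates the semi-norm, so one implication is essentially free; the real work is to show that a quasicircle forces boundedness of the \emph{pointed} reflections, and the content there is a bound on $(\mathfrak{R}^{+-}(\Gamma)h)(p^-)$ in terms of $\| h \|_{\mathcal{D}_{\mathrm{harm},p^+}(\Omega^+)}$.

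For the easy implication, suppose $\mathfrak{R}^{+-}(\Gamma):\mathcal{D}_{\mathrm{harm},p^+}(\Omega^+)\to\mathcal{D}_{\mathrm{harm},p^-}(\Omega^-)$ is bounded with constant $M'$; in particular it is defined on all of $\mathcal{D}_{\mathrm{harm}}(\Omega^+)$, so every $h\in\mathcal{D}_{\mathrm{harm}}(\Omega^+)$ has boundary values in $\mathcal{H}_-(\Gamma)$. The reflection is linear and fixes constants (by uniqueness of the harmonic extension with prescribed Osborn boundary values), so $\mathfrak{R}^{+-}(\Gamma)(h-h(p^+)) = \mathfrak{R}^{+-}(\Gamma)h - h(p^+)$. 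Applying the hypothesis to $\tilde h = h - h(p^+)$, which satisfies $\tilde h(p^+)=0$ and $\|\tilde h\|_{\mathcal{D}_{\mathrm{harm},p^+}(\Omega^+)}=\|h\|_{\mathcal{D}_{\mathrm{harm}}(\Omega^+)}$, and dropping the nonnegative point-value term on the right, one gets $\|\mathfrak{R}^{+-}(\Gamma)h\|_{\mathcal{D}_{\mathrm{harm}}(\Omega^-)}\le M'\|h\|_{\mathcal{D}_{\mathrm{harm}}(\Omega^+)}$ for all $h$. Together with the previous sentence this is exactly condition (1) of Theorem~\ref{th:quasicircle_characterization}, whence $\Gamma$ is a quasicircle; starting instead from $\mathfrak{R}^{-+}(\Gamma)$ one reaches condition (2).

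For the converse, assume $\Gamma$ is a quasicircle, fix $p^\pm\in\Omega^\pm$, and choose conformal maps $f:\disk^+\to\Omega^+$ with $f(0)=p^+$ and $g:\disk^-\to\Omega^-$ with $g(\infty)=p^-$ (the latter is possible since $\mathrm{Aut}(\disk^-)$ acts transitively). Put $\psi=(g^{-1}\circ f)|_{\mathbb{S}^1}$; since $\Gamma$ is a quasicircle, $\psi$ and $\psi^{-1}=(f^{-1}\circ g)|_{\mathbb{S}^1}$ are quasisymmetries. Given $h\in\mathcal{D}_{\mathrm{harm},p^+}(\Omega^+)$, set $H_-=\mathfrak{R}^{+-}(\Gamma)h$. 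By Theorem~\ref{th:quasicircle_characterization} the semi-norm is already under control, $\|H_-\|_{\mathcal{D}_{\mathrm{harm}}(\Omega^-)}\le M\|h\|_{\mathcal{D}_{\mathrm{harm}}(\Omega^+)}\le M\|h\|_{\mathcal{D}_{\mathrm{harm},p^+}(\Omega^+)}$, so only $|H_-(p^-)|$ remains. Let $k\in\mathcal{H}(\mathbb{S}^1)$ be the Osborn boundary values of $h\circ f\in\mathcal{D}_{\mathrm{harm}}(\disk^+)$; by the isometry in Theorem~\ref{th:boundary_values_disk} and conformal invariance of the pointed norm, $\|k\|_{\mathcal{H}_0(\mathbb{S}^1)}=\|h\|_{\mathcal{D}_{\mathrm{harm},p^+}(\Omega^+)}$. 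Naturality of Osborn boundary values under conformal maps (Theorem~\ref{Osborns sats} and Remark~\ref{re:conformal_invariance_limits}), together with the final clause of Theorem~\ref{th:quasicircle_characterization} — which gives that the boundary values of $H_-$ and $h$ coincide off a set whose $g$-preimage on $\mathbb{S}^1$ has capacity, hence Lebesgue measure, zero — shows that the Osborn boundary values of $H_-\circ g\in\mathcal{D}_{\mathrm{harm}}(\disk^-)$ equal $k\circ\psi^{-1}$ almost everywhere on $\mathbb{S}^1$. Finally, every $F\in\mathcal{D}_{\mathrm{harm}}(\disk^-)$ satisfies $F(\infty)=\frac{1}{2\pi}\int_{\mathbb{S}^1}F\,d\theta$, the mean of its boundary values: this follows by applying $\mathfrak{R}(\mathbb{S}^1)$ of Remark~\ref{re:equal_on_Dplusminus} and the elementary identity $G(0)=\frac{1}{2\pi}\int_{\mathbb{S}^1}G\,d\theta$ on $\disk^+$, clear from the series expansion of $G\in\mathcal{D}_{\mathrm{harm}}(\disk^+)$. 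Therefore
\[
 |H_-(p^-)| = \left| \frac{1}{2\pi}\int_{\mathbb{S}^1}(k\circ\psi^{-1})\,d\theta \right| \le \| k\circ\psi^{-1} \|_{\mathcal{H}_0(\mathbb{S}^1)} \le C\, \| k \|_{\mathcal{H}_0(\mathbb{S}^1)} = C\, \| h \|_{\mathcal{D}_{\mathrm{harm},p^+}(\Omega^+)},
\]
the middle step being Theorem~\ref{th:quasisymmetry_composition} applied to $\psi^{-1}$. Combining the two estimates yields $\|H_-\|_{\mathcal{D}_{\mathrm{harm},p^-}(\Omega^-)}\le (C^2+M^2)^{1/2}\|h\|_{\mathcal{D}_{\mathrm{harm},p^+}(\Omega^+)}$, so $\mathfrak{R}^{+-}(\Gamma)$ is bounded on the pointed spaces; interchanging $\Omega^+$ and $\Omega^-$ gives the bound for $\mathfrak{R}^{-+}(\Gamma)$.

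I expect the main obstacle to be the identification $H_-(p^-)=\frac{1}{2\pi}\int_{\mathbb{S}^1}(k\circ\psi^{-1})\,d\theta$. This needs two ingredients: that the boundary values transfer correctly through $f$, $g$, and the reflection — where one must upgrade ``equal off a set of harmonic measure zero'' to ``equal Lebesgue-a.e.\ on $\mathbb{S}^1$'' using the capacity-zero description in Theorem~\ref{th:quasicircle_characterization} — and that evaluation at the distinguished interior point equals the mean of the boundary data. Once these are established, the remaining inputs (boundedness of precomposition with a quasisymmetry in the $\mathcal{H}_0$-norm, and the triviality of the zeroth-coefficient functional) are immediate.
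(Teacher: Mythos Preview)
Your proof is correct and follows essentially the same route as the paper: factor the reflection through the circle via $f(0)=p^+$, $g(\infty)=p^-$, and use boundedness of $C_{\psi^{-1}}$ in the $\mathcal{H}_0(\mathbb{S}^1)$-norm (Theorem~\ref{th:quasisymmetry_composition}). The paper's write-up is simply more compressed: it notes that in the factorization $\mathfrak{R}^{+-}(\Gamma)=C_{g^{-1}}\,e_-\,C_{\phi^{-1}}\,r_+\,C_f$ every map except $C_{\phi^{-1}}$ is a pointed-norm isometry, so your separate estimate of $|H_-(p^-)|$ via the mean value on $\mathbb{S}^1$ is exactly what is packaged into the isometry $e_-:\mathcal{H}_0(\mathbb{S}^1)\to\mathcal{D}_{\mathrm{harm},\infty}(\disk^-)$; for the converse the paper writes $C_{\phi^{-1}}=r_-\,C_g\,\mathfrak{R}^{+-}(\Gamma)\,C_{f^{-1}}\,e_+$ and invokes Theorem~\ref{th:quasisymmetry_composition} directly, which amounts to the same reduction as your constant-subtraction trick.
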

 \begin{proof}
  Choose $f:\disk^+ \rightarrow \Omega^+$ and $g:\disk^- \rightarrow \Omega^-$ to
  be conformal maps such that $f(0)=p^+$ and $g(\infty)=p^-$.  We define extension
  and restriction operators $r_\pm$ and $e_\pm$ as in
  the proof of Theorem \ref{th:quasicircle_characterization} except that now we observe
  that they are isometries between $\mathcal{D}_{\mathrm{harm}}(\disk^\pm)$ and $\mathcal{H}(\mathbb{S}^1)$
  with respect to the $\| \cdot \|_{\mathcal{H}_0(\mathbb{S}^1)}$ and $\| \cdot \|_{\mathcal{D}_{\mathrm{harm},q^{\pm}}(\disk^\pm)}$
  norms, where $q^+=0$ and $q^-=\infty$.
  The proof proceeds as in Theorem \ref{th:quasicircle_characterization}.\\

 Conversely for the welding map  $\phi = g^{-1} \circ f$ the proof of Theorem \ref{th:quasicircle_characterization} shows that that $C_{\phi^{-1}} = r_- C_g
  \mathfrak{R}^{+-}(\Gamma) C_{f^{-1}} e_+ ,$ so the boundedness of $\mathfrak{R}^{+-}(\Gamma)$ will yield the boundedness of $C_{\phi^{-1}}$ on $\mathcal{H}(\mathbb{S}^1)$, which by Theorem \ref{th:quasisymmetry_composition} implies that $\phi^{-1}$ (and therefore also $\phi$) is a quasisymmetry.  Repeating the conformal welding argument in the proof of Theorem \ref{th:quasicircle_characterization} we have that
  $\Gamma$ is a quasicircle. Following a similar argument, the boundedness of  $\mathfrak{R}^{-+}(\Gamma)$ also
   implies that $\Gamma$ is a quasicircle.
 \end{proof}

 Theorem \ref{th:quasicircle_characterization} has the following important
 consequence.  For any quasicircle $\Gamma$ bounding domains $\Omega^\pm$, there is a constant $C>0$
 such that for any $h \in \mathcal{H}(\Gamma)$
 the extensions $H_\pm \in \mathcal{D}_{\mathrm{harm}}(\Omega^\pm)$ satisfy
 \begin{equation} \label{eq:norm_equivalent_in_out}
  \frac{1}{C} \| H_- \|_{\mathcal{D}_{\mathrm{harm}}(\Omega^-)} \leq  \| H_+ \|_{\mathcal{D}_{\mathrm{harm}}(\Omega^+)} \leq C \| H_- \|_{\mathcal{D}_{\mathrm{harm}}(\Omega^-)}.
 \end{equation}
 Thus we may make the following definition:
 \begin{definition}  Let $\Gamma$ be a quasicircle in $\sphere$.
  Define the ``Douglas-Osborn space'' by
  $\mathcal{H}(\Gamma) = \mathcal{H}_+(\Gamma) = \mathcal{H}_-(\Gamma)$.
 \end{definition}
 \begin{remark} \label{re:in_out_automatic}
  Thus, for a quasicircle, any statement regarding boundedness with respect to either $\mathcal{H}_+(\Gamma)$ or $\mathcal{H}_-(\Gamma)$
  is automatically true for both semi-norms.
 \end{remark}

 It is a remarkable property of quasicircles that the spaces are the same and the norms are equivalent.

 We note that neither Douglas nor Osborn made special mention of quasicircles.  However, we chose
 the name because of their pioneering work on expressing the Dirichlet norm in terms of boundary values.

 We can also define a pointed norm on the Douglas-Osborn space as follows.
 \begin{definition}\label{defn: Pointed Osborn equals pointec Dirichlet} Let $\Gamma$ be a quasicircle in $\sphere$.
  For $p \notin \Gamma$, define the pointed Douglas-Osborn norm as follows.  Let $\Omega$
  be the component of the complement of $\Gamma$ containing $p$.  For $h \in \mathcal{H}(\Gamma)$
  let $H$ be the unique harmonic extension in $\mathcal{D}_{\mathrm{harm}}(\Omega)$.  Define
  \[  \| h \|_{\mathcal{H}_p(\Gamma)} = \| H \|_{\mathcal{D}_{\mathrm{harm},p}(\Omega)}.       \]
 \end{definition}
 Of course, this choice is not canonical and breaks the symmetry between inside and outside
 domain.  On the other hand, this defines an actual norm as opposed to a semi-norm.

 We also immediately have that the Douglas-Osborn semi-norm is conformally invariant in the following
 sense.   If $\Gamma_1$ and $\Gamma_2$ are quasicircles bounding domains $\Omega_1$ and $\Omega_2$
 respectively, then a conformal map $f:\Omega_1 \rightarrow \Omega_2$ has a unique homeomorphic
 extension to $\Gamma_1$ and $\Gamma_2$.  We therefore have a well-defined composition
 map
 \begin{align*}
  C_f: \mathcal{H}(\Gamma_2) & \rightarrow \mathcal{H}(\Gamma_1) \\
   h & \mapsto h \circ f.
 \end{align*}
 Then we have the following immediate consequence of our definitions.
 \begin{theorem} \label{th:normal_comp_isometry} Let $\Gamma_1$ and $\Gamma_2$ be Jordan curves in $\sphere$, and let $\Omega_1^\pm$
  and $\Omega_2^\pm$ be components of their complements respectively.   If $f^\pm:\Omega_1^\pm \rightarrow \Omega_2^\pm$
  is a conformal map, then $C_{f^\pm}:\mathcal{H}(\Gamma_2) \rightarrow \mathcal{H}(\Gamma_1)$ is
  an isometry with respect to $\| \cdot \|_{\mathcal{H}_\pm(\Gamma_i)}$.

  If $p_1^\pm \in \Omega_1^\pm$ and $p_2^\pm=f(p_1^\pm) \in \Omega_2^\pm$, then $C_{f^\pm}$ is an isometry
  with respect to $\| \cdot \|_{\mathcal{H}_{p_i^\pm}(\Gamma_i)}$.
 \end{theorem}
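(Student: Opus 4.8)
The plan is to deduce everything from the conformal invariance of the (pointed) Dirichlet semi-norm, already recorded above, together with the identification of $\mathcal{H}_\pm(\Gamma_i)$ with $\mathcal{D}_{\mathrm{harm}}(\Omega_i^\pm)$ furnished by harmonic extension: recall that, for any Jordan curve, each $h \in \mathcal{H}_\pm(\Gamma)$ has a \emph{unique} harmonic extension $H \in \mathcal{D}_{\mathrm{harm}}(\Omega^\pm)$, and the semi-norm $\|\cdot\|_{\mathcal{H}_\pm(\Gamma)}$ is (by construction) the Dirichlet semi-norm of $H$, while $\|\cdot\|_{\mathcal{H}_{p^\pm}(\Gamma)}$ is the pointed Dirichlet norm of $H$ at $p^\pm$ (Definition \ref{defn: Pointed Osborn equals pointec Dirichlet}); for quasicircles $\mathcal{H}(\Gamma) = \mathcal{H}_+(\Gamma) = \mathcal{H}_-(\Gamma)$. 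I will carry out the ``$+$'' case; the ``$-$'' case is identical after exchanging $\disk^+$ and $\disk^-$ (and $0$ and $\infty$ for the pointed statement).

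First I would note that, by the Osgood-Carath\'eodory theorem, the conformal map $f^+ : \Omega_1^+ \to \Omega_2^+$ extends to a homeomorphism of closures, so that $C_{f^+} h = h \circ f^+$ is a well-defined operation on boundary data; moreover $f^+$ carries sets of harmonic measure zero on $\Gamma_2$ (with respect to $\Omega_2^+$) to sets of harmonic measure zero on $\Gamma_1$, by conformal naturality of harmonic measure, so $C_{f^+}$ descends to equivalence classes. Given $h \in \mathcal{H}_+(\Gamma_2)$ with harmonic extension $H \in \mathcal{D}_{\mathrm{harm}}(\Omega_2^+)$, the conformal invariance of $\mathcal{D}_{\mathrm{harm}}$ gives $H \circ f^+ \in \mathcal{D}_{\mathrm{harm}}(\Omega_1^+)$ with $\|H \circ f^+\|_{\mathcal{D}_{\mathrm{harm}}(\Omega_1^+)} = \|H\|_{\mathcal{D}_{\mathrm{harm}}(\Omega_2^+)}$.

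The one point that genuinely requires an argument is that the Osborn boundary values of $H \circ f^+$ coincide with $C_{f^+} h = h \circ f^+$. Here I would use the hyperbolic characterization of the rays $\gamma_{p,q}$ (the hyperbolic geodesic ray from $p$ to $q$) together with the fact that a conformal map is a hyperbolic isometry: thus for $p \in \Omega_1^+$ and $q \in \Gamma_1$, $f^+$ carries the ray $\gamma_{p,q}$ in $\Omega_1^+$ onto the ray $\gamma_{f^+(p), f^+(q)}$ in $\Omega_2^+$, and hence
\[
  \lim_{\gamma_{p,q} \ni z \to q} (H \circ f^+)(z) = \lim_{\gamma_{f^+(p),f^+(q)} \ni w \to f^+(q)} H(w)
\]
whenever the right-hand limit exists. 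By Theorem \ref{Osborns sats} that limit exists for a.e.\ $q$ with respect to harmonic measure on $\Gamma_2$ about $f^+(p)$, which pulls back under $f^+$ to a.e.\ $q$ with respect to harmonic measure on $\Gamma_1$ about $p$. So $H \circ f^+$ has Osborn boundary values a.e., equal to $q \mapsto h(f^+(q))$; equivalently $C_{f^+} h \in \mathcal{H}_+(\Gamma_1)$, with harmonic extension $H \circ f^+$. (This is essentially the content of Remark \ref{re:conformal_invariance_limits}.)

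Combining the previous two paragraphs,
\[
  \|C_{f^+} h\|_{\mathcal{H}_+(\Gamma_1)} = \|H \circ f^+\|_{\mathcal{D}_{\mathrm{harm}}(\Omega_1^+)} = \|H\|_{\mathcal{D}_{\mathrm{harm}}(\Omega_2^+)} = \|h\|_{\mathcal{H}_+(\Gamma_2)},
\]
so $C_{f^+}$ preserves the semi-norm; applying the same reasoning to $(f^+)^{-1}$ shows that it maps onto $\mathcal{H}_+(\Gamma_1)$, hence is an isometric isomorphism. For the pointed statement, with $p_2^\pm = f^\pm(p_1^\pm)$, the intertwining step is unchanged, and one instead invokes the pointed conformal invariance $\|H \circ g\|_{\mathcal{D}_{\mathrm{harm},p}(D)} = \|H\|_{\mathcal{D}_{\mathrm{harm},g(p)}(\Omega)}$ recorded earlier to obtain $\|C_{f^\pm} h\|_{\mathcal{H}_{p_1^\pm}(\Gamma_1)} = \|h\|_{\mathcal{H}_{p_2^\pm}(\Gamma_2)}$. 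I expect the only real obstacle to be the bookkeeping in the intertwining step --- matching up geodesic rays, basepoints, and harmonic-measure null sets under $f^\pm$ --- the rest being a direct appeal to invariance of the Dirichlet integral, which is why the theorem is an immediate consequence of the definitions.
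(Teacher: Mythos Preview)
Your proof is correct and is exactly what the paper intends: the paper gives no proof, declaring the theorem an ``immediate consequence of our definitions,'' and your argument is precisely the unpacking of that claim --- conformal invariance of the (pointed) Dirichlet semi-norm together with the conformal naturality of Osborn boundary values (Remark \ref{re:conformal_invariance_limits}).
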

\end{subsection}
\end{section}
\begin{section}{Jump decomposition and well-posedness of a Riemann-Hilbert problem} \label{se:jump}
\begin{subsection}{Cauchy-type operators on quasidisks}
 In this section we define a limiting Cauchy integral, and
 show that it is a bounded map from $\mathcal{H}(\Gamma)$ to the
 holomorphic Dirichlet spaces of the complement.
 By holomorphic Dirichlet spaces we mean, for a Jordan domain $\Omega \subset \sphere$,
 \[  \mathcal{D}(\Omega) = \{ h \in \mathcal{D}_{\mathrm{harm}}(\Omega) \,:\, h \ \text{is holomorphic} \}.  \]
 The semi-norm and norm on the harmonic Dirichlet space restrict to $\mathcal{D}(\Omega)$; we denote these
 by
 \[  \| h \|_{\mathcal{D}(\Omega)}^2 = \iint_{\Omega} |h'|^2 \,dA  \]
 and
 \[  \| h \|_{\mathcal{D}_p(\Omega)}^2 = |h(p)|^2 +  \iint_{\Omega} |h'|^2 \,dA.  \]

 For convenience in our Cauchy-type integral operators, throughout the rest of the paper we will assume
 that $\Gamma$ does not contain $\infty$.
 Recall that according to our conventions, $\Omega^-$ will be
 the unbounded component of the complement of $\Gamma$.  For the unbounded domain,
 we will define
 \[ \mathcal{D}_*(\Omega^-) = \{ h \in \mathcal{D}(\Omega^-) \,:\, h(\infty) = 0  \}.  \]
 Of course the restriction of $\| \cdot \|_{\mathcal{D}(\Omega^-)}$ is a norm on $\mathcal{D}_*(\Omega^-)$.

 We define an operator which will play the role of the Cauchy integral in the Riemann-Hilbert
 boundary value problem.  We will see in Theorem \ref{th:contour_integral_jump} ahead that
 it equals a Cauchy integral in a certain sense. It is important to note that in what follows, we will make statements and claims that are valid for general Jordan curves. We shall therefore emphasize that fact by including that assumption in all the statements below, until otherwise stated explicitly.

 \begin{definition} Let $\Gamma$ be a Jordan curve not containing $\infty$, bounding $\Omega^\pm$.
 We define the jump operator on $\mathcal{H}_+ (\Gamma)$ as follows.  For $h \in \mathcal{H}_+(\Gamma)$
 let $h_{\Omega^+} \in \mathcal{D}_{\text{harm}}(\Omega^+)$ be its unique extension.  Then
  \begin{equation}\label{defn: Cauchy integral}
J(\Gamma)h(z) := h_{\Omega^+}(z) \chi_{\Omega^{+}}(z)+\frac{1}{\pi }\iint_{\Omega^{+}}
   \frac{\overline{\partial} h_{\Omega^+}(\zeta)}{\zeta -z}\, dA(\zeta), \,\,\, z\in \mathbb{C}\setminus \Gamma
\end{equation}
where $\chi_{\Omega^{+}}$ denotes the characteristic function of the closure of $\Omega^{+}.$
\end{definition}
Denoting by $|\Omega^+|$ the area of $\Omega^+$, which is finite because $\Omega^+$ is bounded, we have the estimate
\begin{equation}
\iint_{\Omega^{+}} |\overline{\partial} h_{\Omega^+}(\zeta)|\, dA(\zeta)\leq |\Omega^{+}|^{\frac{1}{2}} \Big{(} \iint_{\Omega^{+}} |\overline{\partial} h_{\Omega^+}(\zeta)|^2 \, dA(\zeta)\Big{)}^{\frac{1}{2}} \leq |\Omega^{+}|^{\frac{1}{2}} \Vert h_{\Omega^+} \Vert_{\mathcal{D}_{\text{harm}}(\Omega^+)}
\end{equation}
so $\overline{\partial} h_{\Omega^+}\in L^1 (\Omega^{+}).$
It is also easily seen that the formula implies that $\lim_{z\to \infty} J(\Gamma)h (z) =0$, and it is understood that
$J(\Gamma)h(z)$ extends to a function on $\Omega^+ \sqcup \Omega^-$.

To prove that $J(\Gamma)$ is bounded, we also need estimates for a certain integral operator.  Initially, for $\varphi \in C_{0}^\infty (\Omega^+)$ ($C^\infty $ with compact support in $\Omega^+$) we define the operator $T_{\Omega^+}$ via

\begin{equation}\label{defn: operator T}
 T_{\Omega^+}\varphi(z)= \frac{1}{\pi}\iint_{\Omega^+} \frac{\varphi(\zeta)}{\zeta-z}\, dA(\zeta).
\end{equation}
One has that

\begin{equation}\label{estim: L2 norm of operator T}
 \Vert T_{\Omega^+}\varphi\Vert_{L^2(\Omega^+)}\leq C \Vert \varphi\Vert_{L^2 (\Omega^+)}
\end{equation}
and
\begin{equation}\label{estim: Sobolev norm of operator T}
\Vert \partial (T_{\Omega^+}\varphi) \Vert_{L^2 (\Omega^\pm)} +\Vert \overline{\partial}( T_{\Omega^+}\varphi )\Vert_{L^2 (\Omega^\pm)}\leq C \Vert \varphi\Vert_{L^2 (\Omega^+)}.
\end{equation}
 The estimate \eqref{estim: L2 norm of operator T} was proven in \cite[Lemma 3.1]{RSS_WPjump}.
Estimate \eqref{estim: Sobolev norm of operator T} is a direct consequence of the facts that $\partial(T_{\Omega^+}\varphi)(z)= \mathrm{P.V.}\iint_{\Omega^+} \frac{\varphi(\zeta)}{(\zeta-z)^2}\, dA(\zeta),$ which is the Beurling transform of $\varphi \chi_{\Omega^+}$, and $\overline{\partial} (T_{\Omega^+}\varphi)(z)=-\varphi(z)\chi_{\Omega^+}(z).$ Both derivatives here are in the sense of distributions. Now since the Beurling transform $B \varphi(z):=  \mathrm{P.V.}\iint_{\mathbb{C}} \frac{\varphi(\zeta)}{(\zeta-z)^2}\, dA(\zeta)$ is a Calder\'on-Zygmund singular integral operator, it is well-known that $B$ is bounded on $L^p (\mathbb{C})$ for $1<p<\infty$. See e.g. \cite{Lehto} for the proof of these facts.

Now we can state and prove the following key result.
 \begin{theorem} \label{th:J_bounded}  Let $\Gamma$ be a Jordan curve not containing $\infty$, bounding
  $\Omega^\pm$. Then $J(\Gamma) h$ is holomorphic in $\Omega^+ \sqcup \Omega^-$
  for all $h \in \mathcal{H}_{+}(\Gamma)$.  Furthermore the operators
  \begin{align*}
  P(\Omega^+):\mathcal{H}_{+}(\Gamma) & \rightarrow \mathcal{D}(\Omega^+) \\
  h & \mapsto \left. J(\Gamma) h \right|_{\Omega^+}
 \end{align*}
 and
 \begin{align*}
  P(\Omega^-):\mathcal{H}_{+}(\Gamma) & \rightarrow \mathcal{D}(\Omega^-) \\
  h & \mapsto \left. -J(\Gamma) h \right|_{\Omega^-}
 \end{align*} are bounded with respect to the semi-norms $\| \cdot \|_{\mathcal{H}_{\color{blue}{+}}(\Gamma)}$,
 $\| \cdot \|_{\mathcal{D}(\Omega^+)}$ and $\| \cdot\|_{\mathcal{D}(\Omega^-)}$.
 \end{theorem}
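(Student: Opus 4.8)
The plan is to reduce everything to the already-developed machinery on the disk via a conformal map $f\colon\disk^+\to\Omega^+$, and to the integral-operator estimates \eqref{estim: L2 norm of operator T}--\eqref{estim: Sobolev norm of operator T} for the volume-Cauchy term. First I would fix $h\in\mathcal{H}_+(\Gamma)$ with extension $h_{\Omega^+}\in\mathcal{D}_{\mathrm{harm}}(\Omega^+)$, and write $h_{\Omega^+}=u+\overline{v}$ with $u,v$ holomorphic on $\Omega^+$ (this is possible since any complex harmonic function on a simply connected domain splits this way; the splitting is determined up to an additive constant, which does not affect $\overline{\partial}h_{\Omega^+}=\overline{v'}$). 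Then $\overline{\partial}h_{\Omega^+}=\overline{v'}\in L^2(\Omega^+)$ with $\|\overline{v'}\|_{L^2(\Omega^+)}\le\|h_{\Omega^+}\|_{\mathcal{D}_{\mathrm{harm}}(\Omega^+)}$, and the volume term in \eqref{defn: Cauchy integral} is exactly $T_{\Omega^+}(\overline{\partial}h_{\Omega^+})$. Approximating $\overline{\partial}h_{\Omega^+}$ in $L^2(\Omega^+)$ by $C_0^\infty(\Omega^+)$ functions, the estimates \eqref{estim: L2 norm of operator T} and \eqref{estim: Sobolev norm of operator T} extend to $\varphi=\overline{\partial}h_{\Omega^+}$, so $T_{\Omega^+}(\overline{\partial}h_{\Omega^+})$ lies in $L^2(\Omega^\pm)$ with $\partial,\overline{\partial}$ derivatives in $L^2(\Omega^\pm)$, and in particular $\|\partial T_{\Omega^+}(\overline{\partial}h_{\Omega^+})\|_{L^2(\Omega^\pm)}\le C\|h_{\Omega^+}\|_{\mathcal{D}_{\mathrm{harm}}(\Omega^+)}\le C\|h\|_{\mathcal{H}_+(\Gamma)}$, the last inequality being the boundedness of the extension operator $r_+\mapsto e_+$ transported by $C_f$ (Theorem \ref{th:boundary_values_disk} and conformal invariance).

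Next I would check holomorphicity. On $\Omega^-$ the kernel $1/(\zeta-z)$ is holomorphic in $z$ for $\zeta\in\Omega^+$, so $J(\Gamma)h|_{\Omega^-}=T_{\Omega^+}(\overline{\partial}h_{\Omega^+})|_{\Omega^-}$ is holomorphic there; combined with the $L^2$-derivative bound this gives $P(\Omega^-)h\in\mathcal{D}(\Omega^-)$ and $\|P(\Omega^-)h\|_{\mathcal{D}(\Omega^-)}\le C\|h\|_{\mathcal{H}_+(\Gamma)}$ (holomorphicity forces $\overline{\partial}=0$, so only the $\partial$-bound is needed; note $J(\Gamma)h(\infty)=0$ is built in, so the restriction to $\mathcal{D}_*(\Omega^-)$ is automatic and the seminorm suffices). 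On $\Omega^+$ we have $J(\Gamma)h=h_{\Omega^+}+T_{\Omega^+}(\overline{\partial}h_{\Omega^+})$, and since $\overline{\partial}T_{\Omega^+}\varphi=-\varphi\chi_{\Omega^+}$ in the distributional sense, $\overline{\partial}(J(\Gamma)h)=\overline{\partial}h_{\Omega^+}-\overline{\partial}h_{\Omega^+}=0$ on $\Omega^+$; Weyl's lemma then upgrades this to genuine holomorphicity. For the norm bound on $\Omega^+$ write $\partial(J(\Gamma)h)=\partial h_{\Omega^+}+\partial T_{\Omega^+}(\overline{\partial}h_{\Omega^+})$ and estimate each term in $L^2(\Omega^+)$, the first by $\|h_{\Omega^+}\|_{\mathcal{D}_{\mathrm{harm}}(\Omega^+)}$ directly and the second by \eqref{estim: Sobolev norm of operator T}; thus $\|P(\Omega^+)h\|_{\mathcal{D}(\Omega^+)}=\|\partial(J(\Gamma)h)\|_{L^2(\Omega^+)}\le C\|h\|_{\mathcal{H}_+(\Gamma)}$.

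The main technical point — and the step I expect to need the most care — is the passage from the a priori estimates \eqref{estim: L2 norm of operator T}--\eqref{estim: Sobolev norm of operator T}, stated only for $\varphi\in C_0^\infty(\Omega^+)$, to $\varphi=\overline{\partial}h_{\Omega^+}\in L^2(\Omega^+)$, together with the justification that the distributional identities $\partial T_{\Omega^+}\varphi=B(\varphi\chi_{\Omega^+})$ and $\overline{\partial}T_{\Omega^+}\varphi=-\varphi\chi_{\Omega^+}$ persist in the limit. This is a routine density-and-continuity argument: choose $\varphi_n\in C_0^\infty(\Omega^+)$ with $\varphi_n\to\overline{\partial}h_{\Omega^+}$ in $L^2(\Omega^+)$; by \eqref{estim: L2 norm of operator T} the $T_{\Omega^+}\varphi_n$ are Cauchy in $L^2(\Omega^+)$ and by \eqref{estim: Sobolev norm of operator T} their $\partial,\overline\partial$-derivatives are Cauchy in $L^2(\Omega^\pm)$, so $T_{\Omega^+}\varphi_n\to T_{\Omega^+}(\overline{\partial}h_{\Omega^+})$ in the Sobolev sense and the derivative identities pass to the limit by continuity of $B$ on $L^2(\mathbb C)$ and of multiplication by $\chi_{\Omega^+}$; since $|\Omega^+|<\infty$ one also gets $L^1$-convergence so the pointwise integral formula \eqref{defn: Cauchy integral} is consistent with this limit. (No new idea is needed here, but because $\Omega^+$ is merely a Jordan domain with possibly wild boundary, the argument must be phrased purely in terms of $L^2(\Omega^+)$ and the global Beurling transform, never touching $\partial\Omega^+$.) Everything else then follows from collecting the three bounds above.
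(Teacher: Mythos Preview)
Your argument is correct and follows essentially the same route as the paper: write $J(\Gamma)h=h_{\Omega^+}\chi_{\Omega^+}+T_{\Omega^+}(\overline{\partial}h_{\Omega^+})$, use $\overline{\partial}T_{\Omega^+}\varphi=-\varphi\chi_{\Omega^+}$ to get holomorphicity on $\Omega^+$, observe holomorphicity on $\Omega^-$ directly from the kernel, and bound the Dirichlet seminorms via \eqref{estim: Sobolev norm of operator T}. You are in fact more careful than the paper on two points it leaves implicit: the density passage from $C_0^\infty(\Omega^+)$ to $L^2(\Omega^+)$ in the estimates, and the invocation of Weyl's lemma to upgrade distributional to classical holomorphicity. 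One small simplification: the detour through $f$ and the extension/restriction operators is unnecessary, since by definition $\|h\|_{\mathcal{H}_+(\Gamma)}=\|h_{\Omega^+}\|_{\mathcal{D}_{\mathrm{harm}}(\Omega^+)}$.
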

 \begin{proof} For $h \in \mathcal{H}_{\color{blue}{+}}(\Gamma)$, set
   $$h_{\pm}=\pm(J(\Gamma)h)|_{\Omega^{\pm}}.$$
 We first show that $h_\pm$ are holomorphic in $\Omega^{\pm}$. Since $J(\Gamma)h= h_{\Omega^+} +T_{\Omega^+} \overline{\partial}h_{\Omega^+}$ one has that $\overline{\partial}J(\Gamma)h(z)=  \overline{\partial}h_{\Omega^{+}}-\overline{\partial} h_{\Omega^{+}}=0.$ On $\Omega^-$ one has $$J(\Gamma)h(z)= \frac{1}{\pi }\iint_{\Omega^{+}} \frac{\overline{\partial}h_{\Omega^+}(\zeta)}{\zeta -z}\, dA(\zeta),$$ which is obviously holomorphic on $\Omega^- \backslash \{ \infty \}$.  From the expression  \eqref{defn: Cauchy integral} we also see that $J(\Gamma)h \rightarrow 0$ as $z \rightarrow \infty$,
so in fact $J(\Gamma) h$ is holomorphic in a neighbourhood of $\infty$.\\

Next we show that $h \mapsto h_\pm$ are bounded.
The estimate \eqref{estim: Sobolev norm of operator T} for $\Omega^\pm$ yields that
\begin{equation}\label{continious dependence on initial data plus}
              \Vert h_{+}\Vert_{\mathcal{D}(\Omega^{+})} =\Vert (J(\Gamma)h)|_{\Omega^+ }\Vert_{\mathcal{D}(\Omega^{+})}
               \leq \Vert h_{\Omega^+}   \Vert_{\mathcal{D}_{\text{harm}}(\Omega^{+})}+
                C \Vert \overline{\partial}h_{\Omega^+}  \Vert_{L^2(\Omega^{+})}
                \leq C \Vert h_{\Omega^+} \Vert_{\mathcal{D}_{\text{harm}}(\Omega^{+})}
\end{equation}
and
\begin{equation}\label{continious dependence on initial data minus}
              \Vert h_{-}\Vert_{\mathcal{D}(\Omega^{-})} =\Vert (J(\Gamma)h)|_{\Omega^- }\Vert_{\mathcal{D}(\Omega^{-})}
               \leq  C \Vert \overline{\partial}h_{\Omega^+} \Vert_{L^2(\Omega^{+})} \leq C \Vert h_{\Omega^+} \Vert_{\mathcal{D}_{\text{harm}}(\Omega^{+})}.
\end{equation}
which proves the claim.
 \end{proof}
 \begin{corollary}\label{cor:boundedness of P(omega)}  Let $\Gamma$ be a Jordan curve not containing $\infty$, bounding $\Omega^\pm$.
 Let $p \in \Omega^+$.  $P(\Omega^+)$ is bounded with
 respect to the norms $\| \cdot \|_{\mathcal{H}_p(\Gamma)}$ and $\| \cdot \|_{\mathcal{D}_p(\Omega^+)}$;
 and $P(\Omega^-)$ is bounded with respect to $\Vert\cdot\Vert_{\mathcal{H}_p(\Gamma)}$ and $\| \cdot \|_{\mathcal{D}(\Omega^-)}$.
 \end{corollary}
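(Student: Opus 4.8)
The plan is to bootstrap from Theorem~\ref{th:J_bounded}, which already supplies the semi-norm estimates $\|P(\Omega^+)h\|_{\mathcal{D}(\Omega^+)} \le C\|h\|_{\mathcal{H}_+(\Gamma)}$ and $\|P(\Omega^-)h\|_{\mathcal{D}(\Omega^-)} \le C\|h\|_{\mathcal{H}_+(\Gamma)}$, and to upgrade the $\Omega^+$ estimate to the pointed norms. The first observation is purely formal: since $p \in \Omega^+$, the pointed Douglas--Osborn norm of $h \in \mathcal{H}_+(\Gamma)$ is $\|h\|_{\mathcal{H}_p(\Gamma)}^2 = |h_{\Omega^+}(p)|^2 + \|h\|_{\mathcal{H}_+(\Gamma)}^2$, where $h_{\Omega^+} \in \mathcal{D}_{\mathrm{harm}}(\Omega^+)$ is the harmonic extension of $h$; in particular $\|h\|_{\mathcal{H}_+(\Gamma)} \le \|h\|_{\mathcal{H}_p(\Gamma)}$ and $|h_{\Omega^+}(p)| \le \|h\|_{\mathcal{H}_p(\Gamma)}$. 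The claim for $P(\Omega^-)$ is then immediate, since its target norm $\|\cdot\|_{\mathcal{D}(\Omega^-)}$ carries no point-evaluation term: Theorem~\ref{th:J_bounded} yields $\|P(\Omega^-)h\|_{\mathcal{D}(\Omega^-)} \le C\|h\|_{\mathcal{H}_+(\Gamma)} \le C\|h\|_{\mathcal{H}_p(\Gamma)}$.

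It remains to control the extra point-evaluation term in $\|P(\Omega^+)h\|_{\mathcal{D}_p(\Omega^+)}^2 = |J(\Gamma)h(p)|^2 + \|P(\Omega^+)h\|_{\mathcal{D}(\Omega^+)}^2$. From \eqref{defn: Cauchy integral},
\[ J(\Gamma)h(p) = h_{\Omega^+}(p) + \frac{1}{\pi}\iint_{\Omega^+} \frac{\overline{\partial}h_{\Omega^+}(\zeta)}{\zeta - p}\,dA(\zeta), \]
and the first term is already dominated by $\|h\|_{\mathcal{H}_p(\Gamma)}$. For the integral I would exploit that, $h_{\Omega^+}$ being complex harmonic, $\overline{\partial}h_{\Omega^+}$ is anti-holomorphic on $\Omega^+$. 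Choose $\delta>0$ with the closed disk $D=\{\zeta : |\zeta-p|\le\delta\}$ contained in $\Omega^+$, and split the integral over $D$ and over $\Omega^+\setminus D$. On $D$ one expands $\overline{\partial}h_{\Omega^+}$ in a locally uniformly convergent series in powers of $\overline{\zeta-p}$; writing $\zeta - p = re^{i\phi}$ and integrating in $\phi$ first, every term carries a factor $\int_0^{2\pi} e^{-i(n+1)\phi}\,d\phi = 0$, so $\frac{1}{\pi}\iint_{D}\frac{\overline{\partial}h_{\Omega^+}(\zeta)}{\zeta-p}\,dA(\zeta) = 0$. On $\Omega^+\setminus D$ one has $|\zeta-p|\ge\delta$, so by the Cauchy--Schwarz estimate recorded just before Theorem~\ref{th:J_bounded} (which also shows $\overline{\partial}h_{\Omega^+}\in L^1(\Omega^+)$),
\[ \left| \frac{1}{\pi}\iint_{\Omega^+\setminus D} \frac{\overline{\partial}h_{\Omega^+}(\zeta)}{\zeta-p}\,dA(\zeta) \right| \le \frac{1}{\pi\delta}\iint_{\Omega^+} |\overline{\partial}h_{\Omega^+}(\zeta)|\,dA(\zeta) \le \frac{|\Omega^+|^{1/2}}{\pi\delta}\,\|h\|_{\mathcal{H}_+(\Gamma)} \le \frac{|\Omega^+|^{1/2}}{\pi\delta}\,\|h\|_{\mathcal{H}_p(\Gamma)}. \]
Hence $|J(\Gamma)h(p)| \le \bigl(1 + |\Omega^+|^{1/2}/(\pi\delta)\bigr)\|h\|_{\mathcal{H}_p(\Gamma)}$, and combining this with the semi-norm bound from Theorem~\ref{th:J_bounded} gives $\|P(\Omega^+)h\|_{\mathcal{D}_p(\Omega^+)} \le C\|h\|_{\mathcal{H}_p(\Gamma)}$, as required.

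The only real difficulty is the point evaluation $|J(\Gamma)h(p)|$: the operator $T_{\Omega^+}$ does not map $L^2(\Omega^+)$ into $L^\infty$ (indeed $\iint_{\Omega^+}|\zeta-p|^{-2}\,dA$ diverges), so one cannot estimate $(T_{\Omega^+}\overline{\partial}h_{\Omega^+})(p)$ by pairing the kernel $1/(\zeta-p)$ against $\overline{\partial}h_{\Omega^+}$ in $L^2$. It is the anti-holomorphicity of $\overline{\partial}h_{\Omega^+}$ --- which forces the local contribution at the centre $p$ to vanish --- that makes the pointwise bound go through; everything else is bookkeeping with the definitions and with the already-proved Theorem~\ref{th:J_bounded}. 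The one technical point to verify carefully is the term-by-term integration in the polar computation, which is harmless since the power series converges uniformly on the closed disk $D$.
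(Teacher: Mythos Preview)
Your proof is correct, and for $P(\Omega^-)$ it coincides with the paper's argument. For $P(\Omega^+)$ both proofs reduce to bounding $|(T_{\Omega^+}\overline{\partial}h_{\Omega^+})(p)|$, but the mechanisms differ. The paper observes that $T_{\Omega^+}\overline{\partial}h_{\Omega^+} = J(\Gamma)h - h_{\Omega^+}$ is harmonic on $\Omega^+$, applies the mean value inequality and Jensen to dominate the point value by $\|T_{\Omega^+}\overline{\partial}h_{\Omega^+}\|_{L^2(\Omega^+)}$, and then invokes the $L^2$-boundedness of $T_{\Omega^+}$ (estimate \eqref{estim: L2 norm of operator T}, proved in a cited reference). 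You instead exploit that $\overline{\partial}h_{\Omega^+}$ is \emph{anti-holomorphic}: the polar-coordinate orthogonality kills the contribution from a disk about $p$ outright, and the remainder is handled by the trivial $L^1$/Cauchy--Schwarz bound already displayed in the paper. Your route is more elementary and self-contained---it never needs \eqref{estim: L2 norm of operator T}---while the paper's route is slightly more structural and would apply to arbitrary $L^2$ densities, not just anti-holomorphic ones.
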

 \begin{proof}  The claim for $P(\Omega^-)$ follows from Theorem \ref{th:J_bounded} and
 the fact that $\| \cdot \|_{\mathcal{H}(\Gamma)} \leq  \| \cdot \|_{\mathcal{H}_p(\Gamma)}$. Therefore we shall only prove the claim for $P(\Omega^+)$. For the pointed norm we have

\begin{align*}
\Vert (J(\Gamma)h)|_{\Omega^+ }\Vert^2_{\mathcal{D}_p(\Omega^{+})} &= |J(\Gamma)h (p)|^2 +\Vert (J(\Gamma)h)|_{\Omega^+ }\Vert^2 _{\mathcal{D}(\Omega^{+})}\\ & = |h_{\Omega^+ }(p)+ T\overline{\partial} h_{\Omega^+} (p)|^2 +\Vert (J(\Gamma)h)|_{\Omega^+ }\Vert^2_{\mathcal{D}(\Omega^{+})},
\end{align*}
where the operator $T$ is defined by (\ref{defn: operator T}).

Through \eqref{continious dependence on initial data plus} we already know that $\Vert (J(\Gamma)h)|_{\Omega^+ }\Vert_{\mathcal{D}(\Omega^{+})}\leq C \Vert h_{\Omega^+} \Vert_{\mathcal{D}_{\text{harm}}(\Omega^{+})}.$

Moreover $T\overline{\partial} h_{\Omega^+}(z)$ is a harmonic function in $\Omega ^+$, if $ h_{\Omega^+}$ is harmonic in $\Omega ^+$. This is because
$ \overline{\partial} (T \overline{\partial} h_{\Omega^+}(z))=  -\overline{\partial} h_{\Omega^+} (z)$, for $z\in\Omega^+$.

Now since $p\in\Omega^+$ and $T\overline{\partial}h_{\Omega^+}$ is harmonic in $\Omega^+$, there is an $r>0$ such that $\mathbb{D}(p,r)\subset \Omega^+$ and by the mean-value theorem for harmonic functions one has

\begin{equation}\label{meanvalue theorem}
  |T\overline{\partial}h_{\Omega^+}(p)|\leq \frac{1}{\pi r^2} \iint_{\mathbb{D}(p,r)}|T\overline{\partial}h_{\Omega^+}(z)|\, dA(z).
\end{equation}

Moreover Jensen's inequality yields

\begin{equation}\label{jensen}
|T\overline{\partial}h_{\Omega^+}(p)|^2\leq \frac{|\Omega^+|}{\pi^2 r^4} \iint_{\Omega^+}|T\overline{\partial}h_{\Omega^+}(z)|^2\, dA(z).
\end{equation}

Hence, using estimates \eqref{meanvalue theorem} and \eqref{jensen}, together with the $L^2$-estimate \eqref{estim: L2 norm of operator T}, we have

\begin{equation}
| T\overline{\partial} h_{\Omega^+} (p)|^2 \leq C \Vert T\overline{\partial} h_{\Omega^+}\Vert_{L^2(\Omega^+)} ^2 \leq C \Vert \overline{\partial} h_{\Omega^+}\Vert_{L^2(\Omega^+)} ^2 \leq C  \Vert h_{\Omega^+} \Vert^2 _{\mathcal{D}_{\text{harm}}(\Omega^{+})}.
\end{equation}

Finally gathering all the estimates, we obtain

\begin{equation}
\Vert (J(\Gamma)h)|_{\Omega^+ }\Vert^2_{\mathcal{D}_p(\Omega^{+})}\leq C\big(|h_{\Omega^+ }(p)|^2 +  \Vert h_{\Omega^+} \Vert^2_{\mathcal{D}_{\text{harm}}(\Omega^{+})}\big)=C \Vert h_{\Omega^+} \Vert^2_{\mathcal{D}_{\mathrm{harm},p}(\Omega^{+})},
\end{equation}
and Definition \ref{defn: Pointed Osborn equals pointec Dirichlet} ends the proof of this corollary.
 \end{proof}
 \begin{remark}  In the notations $P(\Omega^\pm)$ and $J(\Gamma)$, $P$ of course stands for projection
  and $J$ stands for jump.
 \end{remark}

At this point we return to the realm of quasicircles. The following limiting integral expression is key to the results of the next section.
 \begin{theorem} \label{th:contour_integral_jump}  Let $\Gamma$ be a quasicircle, not containing $\infty$, bounding
  domains $\Omega^\pm$.   Let $f:\disk^+ \rightarrow \Omega^+$ and $g:\disk^- \rightarrow \Omega^-$
  be conformal maps.  Let $\gamma_r$ denote the curve $|w|=r$ traced counter-clockwise.  Then for
  all $h \in \mathcal{H}(\Gamma)$ and all $z \in \sphere \backslash \Gamma$,
  \begin{align}  \label{eq:J_defn_contour}
   J(\Gamma) h (z) & = \lim_{r \nearrow 1} \frac{1}{2 \pi i} \int_{f(\gamma_r)} \frac{h_{\Omega^+}(\zeta)}{\zeta - z} d\zeta  \\ \nonumber
   & =  \lim_{r \searrow 1} \frac{1}{2 \pi i} \int_{g(\gamma_r)} \frac{h_{\Omega^-}(\zeta)}{\zeta - z} d\zeta.
  \end{align}
 \end{theorem}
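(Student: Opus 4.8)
The plan is to establish the two contour-integral formulas in \eqref{eq:J_defn_contour} separately, and the heart of each is the same: transport the Cauchy integral over the level curve $f(\gamma_r)$ (respectively $g(\gamma_r)$) to an area integral using Green's / Stokes' theorem, and then pass to the limit $r \to 1$ using the $L^1$-control on $\overline{\partial} h_{\Omega^+}$ established just before the theorem. I would treat the ``$+$'' side first. Fix $z \in \sphere \setminus \Gamma$. For $r$ close to $1$, the curve $f(\gamma_r) = C_{p,\epsilon}$ (with $r = e^{-\epsilon}$, $p = f(0)$) is a smooth Jordan curve bounding a region $\Omega_r^+ \subset \Omega^+$, and $h_{\Omega^+}$ is harmonic, hence $C^\infty$, on a neighbourhood of $\mathrm{cl}\,\Omega_r^+$. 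Applying the Cauchy-Green (Pompeiu) formula on $\Omega_r^+$ gives, for $z \in \Omega_r^+$,
\[
  \frac{1}{2\pi i}\int_{f(\gamma_r)} \frac{h_{\Omega^+}(\zeta)}{\zeta - z}\,d\zeta
   = h_{\Omega^+}(z) + \frac{1}{\pi}\iint_{\Omega_r^+} \frac{\overline{\partial} h_{\Omega^+}(\zeta)}{\zeta - z}\,dA(\zeta),
\]
and for $z \notin \mathrm{cl}\,\Omega_r^+$ the boundary term $h_{\Omega^+}(z)$ is absent (the left side equals only the area integral). In either case, as $r \nearrow 1$ the region $\Omega_r^+$ increases to $\Omega^+$, and since $\overline{\partial} h_{\Omega^+} \in L^1(\Omega^+)$ and $\zeta \mapsto (\zeta-z)^{-1}$ is locally integrable, dominated convergence gives $\iint_{\Omega_r^+} \to \iint_{\Omega^+}$. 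Comparing with the definition \eqref{defn: Cauchy integral} of $J(\Gamma)h$ — noting that $\chi_{\Omega^+}(z)=1$ exactly when $z \in \mathrm{cl}\,\Omega^+$, which for $r$ near $1$ matches whether $z \in \Omega_r^+$ — yields the first equality.

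For the ``$-$'' side the structure is the same but the orientation and the role of $\infty$ must be handled. Here $g(\gamma_r)$ for $r \searrow 1$ bounds an annular-type region in $\Omega^-$ together with (the reflection of) a large region; more cleanly, $g(\gamma_r)$ bounds, on one side, a region $\Omega_r^- \subset \Omega^-$ containing $\infty$, and as $r \searrow 1$ this increases to $\Omega^-$. I would apply the Cauchy-Green formula on the \emph{complement} of $\Omega_r^-$ in $\sphere$ — equivalently integrate over $g(\gamma_r)$ traced with the orientation induced as the boundary of the bounded region it encloses — and use that $h_{\Omega^-}$ is harmonic on a neighbourhood of $\mathrm{cl}\,\Omega_r^-$ including $\infty$, with $h_{\Omega^-}(\infty)$ finite so no contribution from the point at infinity. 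The key point is that the boundary values of $h_{\Omega^+}$ and $h_{\Omega^-}$ agree on $\Gamma$ off a set of harmonic measure zero (Theorem \ref{th:quasicircle_characterization}, available since $\Gamma$ is a quasicircle), so \emph{both} limits compute the same boundary Cauchy integral; I would either run the Pompeiu argument on the $\Omega^-$ side directly, or — perhaps more transparently — observe that $J(\Gamma)h$ can equally be written with $\Omega^-$ in place of $\Omega^+$ after accounting for the sign, using that $h = h_{\Omega^+} = h_{\Omega^-}$ a.e.\ on $\Gamma$ and that $\overline{\partial} h_{\Omega^-} \in L^1$ on compact subsets together with the decay of $h_{\Omega^-}$ near $\infty$ (here the conformal invariance of the Dirichlet norm and $h_{\Omega^-}(\infty)$ being finite are what make the area integral over $\Omega^-$ converge after subtracting the constant). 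Equating the two contour limits then reduces to the identity $J(\Gamma)h|_{\Omega^\pm}$ computed from either side, which is exactly the content of the jump formula being set up.

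The main obstacle I anticipate is the passage to the limit when $z$ lies on the ``wrong'' side — that is, controlling $\int_{f(\gamma_r)} h_{\Omega^+}(\zeta)(\zeta-z)^{-1} d\zeta$ for $z \in \Omega^-$ as $r \nearrow 1$, since then $z$ is an interior point of the region the curve is approaching and there is no singularity to worry about, but one still needs the area integral $\iint_{\Omega_r^+} \overline{\partial}h_{\Omega^+}(\zeta)(\zeta-z)^{-1}dA$ to converge — this is fine because $z \notin \mathrm{cl}\,\Omega^+$ so the kernel is bounded on $\Omega^+$, but one must be careful that the boundary term genuinely vanishes, i.e.\ that $z \notin \mathrm{cl}\,\Omega_r^+$ for all $r$ sufficiently close to $1$, which follows from $z \in \Omega^-$ open and $\mathrm{cl}\,\Omega_r^+ \nearrow \mathrm{cl}\,\Omega^+$. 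A secondary technical point is justifying the Cauchy-Green formula on $\Omega_r^\pm$: these are smooth Jordan domains (images of circles under conformal maps that extend smoothly by Osgood-Carath\'eodory applied to $f$ restricted near $|w|=r<1$, where $f$ is in fact real-analytic), so the classical Pompeiu formula applies without fuss. Finally, matching $\chi_{\Omega^+}$ in \eqref{defn: Cauchy integral} (characteristic function of the \emph{closure}) against the limiting behaviour requires only noting that points of $\Gamma$ itself are excluded since $z \in \sphere \setminus \Gamma$.
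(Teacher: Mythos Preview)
Your treatment of the first equality is correct and is essentially the paper's argument: Cauchy--Green/Pompeiu on the smooth domain $\Omega_r^+$ followed by dominated convergence using $\overline{\partial}h_{\Omega^+}\in L^1(\Omega^+)$.

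The second equality, however, has a genuine gap. Running Pompeiu on the $\Omega^-$ side will at best produce
\[
  \lim_{r\searrow 1}\frac{1}{2\pi i}\int_{g(\gamma_r)}\frac{h_{\Omega^-}(\zeta)}{\zeta-z}\,d\zeta
   \;=\; h_{\Omega^-}(z)\chi_{\Omega^-}(z)+\frac{1}{\pi}\iint_{\Omega^-}\frac{\overline{\partial}h_{\Omega^-}(\zeta)}{\zeta-z}\,dA(\zeta)
\]
(with the orientation/sign and the behaviour at $\infty$ sorted out). But this is a formula in terms of $h_{\Omega^-}$, while $J(\Gamma)h$ is defined in terms of $h_{\Omega^+}$. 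Your proposed bridge --- ``$h_{\Omega^+}=h_{\Omega^-}$ a.e.\ on $\Gamma$, so both limits compute the same boundary Cauchy integral'' --- is not a proof: there is no integral over $\Gamma$ to speak of, and agreement of boundary values in the Osborn sense on a set of full harmonic measure does not by itself force the two area representations to coincide. Your final sentence (``equating the two contour limits then reduces to \ldots\ the jump formula being set up'') is circular: the jump decomposition in the paper is proved \emph{using} this theorem.

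The paper's route is substantively different and supplies exactly the missing link. It does not attempt to match the two area formulas directly. Instead it first proves the equality of the two contour limits for a \emph{dense} class of data (boundary values of harmonic functions that are smooth across $\Gamma$, e.g.\ $p(z)+\overline{q(z)}$ with $p,q$ polynomials). For such a smooth $p$, one applies Stokes' theorem on the thin annular region $V_\epsilon$ between $f(\gamma_{1-\epsilon})$ and $g(\gamma_{1+\epsilon})$; the area term tends to zero because a quasicircle has Lebesgue measure zero (this is the key geometric input you never invoke). One then replaces $p$ by $h_{\Omega^-}$ on the outer curve using that $p-h_{\Omega^-}$ is continuous on $\mathrm{cl}\,\Omega^-$ and vanishes on $\Gamma$. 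Finally, the result is extended to all of $\mathcal{H}(\Gamma)$ by density, using that the reflection $\mathfrak{R}^{+-}(\Gamma)$ and the operator $J(\Gamma)$ are bounded. Both ingredients --- the Lebesgue-null property of $\Gamma$ and the density/boundedness argument --- are absent from your proposal, and without them the second equality does not go through.
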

 \begin{proof}  We prove the first integral formula, which is straightforward.  For $z \in \Omega^-$, it follows directly from
 Stokes' theorem.  If $z \in \Omega^+$ we apply the
   mean value theorem for harmonic functions to the first term of \eqref{defn: Cauchy integral} and the Stokes theorem to its second term to obtain
   \begin{align*}
    J(\Gamma)h(z) & = \lim_{R \searrow 0} \frac{1}{2 \pi i} \int_{|\zeta -z|=R } \frac{h_{\Omega^+}(\zeta)}{\zeta - z} d\zeta
     +  \lim_{r \nearrow 1}\lim_{R \searrow 0}\frac{1}{\pi} \iint_{f(\{|w|<r\})\setminus\{|\zeta - z|\leq R\}}
     \frac{\overline{\partial} h_{\Omega^+}(\zeta)}{\zeta - z} dA(\zeta)  \\
      & = \lim_{R \searrow 0} \frac{1}{2 \pi i} \int_{|\zeta -z|=R } \frac{h_{\Omega^+}(\zeta)}{\zeta - z} d\zeta + \lim_{r \nearrow 1} \frac{1}{2 \pi i} \int_{f(|w|=r)} \frac{h_{\Omega^+}(\zeta)}{\zeta - z} d\zeta \\
      & -\lim_{R \searrow 0} \frac{1}{2 \pi i} \int_{|\zeta -z|=R } \frac{h_{\Omega^+}(\zeta)}{\zeta - z} d\zeta
      = \lim_{r \nearrow 1} \frac{1}{2 \pi i} \int_{f(\gamma_r)} \frac{h_{\Omega^+}(\zeta)}{\zeta - z} d\zeta
      \end{align*}
   which proves the first integral expression.

   The second integral expression requires more work, and proceeds as follows.
   We first show that the two limiting integrals are equal
   for those elements of the Dirichlet space which are smooth on the closure.  By applying boundedness of the reflection and density of this subset, the result follows.  To
     prove the result for smooth harmonic functions we require two preparatory facts.
   Let $U$ be an open set containing the quasicircle $\Gamma$.  Let $q(z)$ be a smooth function
   on $U$.  For $\epsilon >0$, let $V_\epsilon$ denote the region bounded by the analytic
   curves $f(\gamma_{1-\epsilon})$ and $g(\gamma_{1+\epsilon})$.
   Since $\Gamma$ is a quasicircle, it has Lebesgue measure zero (since quasiconformal
   maps take Lebesgue null-sets to Lebesgue null-sets, see e.g. \cite[I.3.5]{Lehto}).  Thus by
   Stokes' theorem
   \begin{equation} \label{eq:main_Stokes_temp}
    \lim_{\epsilon \rightarrow 0}  \frac{1}{2 \pi i}  \left(\int_{f(\gamma_{1-\epsilon})} \frac{q(\zeta)}{\zeta - z} d\zeta -
           \int_{g(\gamma_{1+\epsilon})} \frac{q(\zeta)}{\zeta - z} d\zeta \right)
          =  \lim_{\epsilon \rightarrow 0} \frac{1}{\pi} \int_{V_\epsilon}  \frac{\overline{\partial} q(\zeta)}{\zeta - z} dA_\zeta =0.
   \end{equation}
   Next, observe that if $Q(z)$ is continuous on $U \cap (\Omega^+)^c$ (where $\cdot ^c$ denotes complement)
   and zero on $\Gamma$ then
   \begin{equation} \label{eq:main_cont_zero_temp}
     \lim_{\epsilon \rightarrow 0} \int_{g(\gamma_{1+\epsilon})} \frac{Q(\zeta)}{\zeta - z} d\zeta =0, \,\,\,  z \in \sphere \backslash \Gamma
   \end{equation}

   Now let $W= U \cup \Omega^+$.  The claim is now easily seen to hold for harmonic functions on $\Omega^+$ which are
   smooth on $W$.
   Let $p$ be such a function, and let $h$ denote its boundary values.  Thus $h_{\Omega^+} = \left. p \right|_{\Omega^+}$.
   Furthermore we have that $h_{\Omega^-}= \mathfrak{R}^{+-} (\left. p\right|_{\Omega^+})$ has a continuous extension to $\text{cl} \Omega^-$ with
   respect to the spherical topology.  To see this, first we
   apply a translation to arrange that $\Omega^-$ does not contain $0$ in its closure.  We observe then that $h_{\Omega^-}(1/z)$ is the solution
   to the Dirichlet problem on the bounded domain $1/\Omega^-$ with continuous boundary data on $1/\Gamma$,
   so it is continuous on the closure of $1/\Omega^-$; the claim now follows from the facts that the Euclidean
   and spherical topologies are the same on bounded domains, and that $1/z$ and
   translations are continuous in the spherical topology.
   In particular, $h_{\Omega^-}$ is continuous on $V_{\epsilon_0} \cap (\Omega^+)^c \subset W$ for some $\epsilon_0 >0$,
   (where containment in $W$ is obtained by choosing $\epsilon_0$ sufficiently small).
   Thus since $p - h_{\Omega^-}$ is continuous on $V_{\epsilon_0} \cap (\Omega^+)^c$ and zero on $\Gamma$,
   by (\ref{eq:main_cont_zero_temp})
   \begin{align*}
    \lim_{\epsilon \rightarrow 0} \frac{1}{2 \pi i} \int_{g(\gamma_{1+\epsilon})} \frac{ h_{\Omega^-}(\zeta)}{\zeta - z} d\zeta
    & = \lim_{\epsilon \rightarrow 0} \frac{1}{2 \pi i} \int_{g(\gamma_{1+\epsilon})} \frac{ p(\zeta)}{\zeta - z} d\zeta \\
    & = \lim_{\epsilon \rightarrow 0} \frac{1}{2 \pi i} \int_{f(\gamma_{1-\epsilon})} \frac{ p(\zeta)}{\zeta - z} d \zeta \\
    & =  \lim_{\epsilon \rightarrow 0} \frac{1}{2 \pi i} \int_{f(\gamma_{1-\epsilon})} \frac{ h_{\Omega^+}(\zeta)}{\zeta - z} d \zeta
   \end{align*}
   where the second-to-last equality follows from (\ref{eq:main_Stokes_temp}).

   Finally, we observe that harmonic functions which are smooth on some $W$ as above are dense in $\mathcal{H}(\Gamma)$,
   since for example the set of holomorphic and anti-holomorphic polynomials is dense in $\mathcal{D}_{\mathrm{harm}}(\Omega)$.
   To prove this, one can for example apply the density of polynomials in the Bergman space of so-called Carath\'eodory
   domains \cite[v.3, Section 15]{Markushevich_book} which includes quasidisks.
   By the fact that differentiation is an isometry between the holomorphic Dirichlet and Bergman space (up to constants), it follows that
   polynomials are dense in the holomorphic Dirichlet space.
   Similarly conjugate polynomials are dense in the complex conjugate of the holomorphic
   Dirichlet space.  Since harmonic functions on a simply connected domain have a unique decomposition
   into holomorphic and antiholomorphic parts (up to constants), the set of $p(z) + \overline{q(z)}$ where
   $p$ and $q$ are polynomials is dense in the complex harmonic Dirichlet space.
   Since $\mathfrak{R}^{+-}$ is a bounded operator, this completes the proof.
 \end{proof}
  Note that the formulation of the previous theorem
  requires the fact that $h$ has an extension to both inside and outside, and the proof requires that the
  reflections $\mathfrak{R}^{+-}(\Gamma)$ and $\mathfrak{R}^{-+}(\Gamma)$ are bounded.
  Only quasicircles have both properties.

  Although the following corollary is now obvious, it is worth
  writing out explicitly.   Using notation as in Theorem \ref{th:contour_integral_jump},
  for a quasicircle $\Gamma$ not containing $\infty$
  we can define two Cauchy integral-type projections on $\mathcal{H}(\Gamma)$
  in two distinct reasonable ways using
  limiting integrals:
   \begin{align*}
   {\Pi}_+(\Omega^\pm): \mathcal{H}(\Gamma) & \rightarrow \mathcal{D}(\Omega^\pm) \\
    h  & \mapsto \lim_{r \nearrow 1} \frac{1}{2 \pi i} \int_{f(\gamma_r)} \frac{h_{\Omega^+}(\zeta)}{\zeta - z} d\zeta
   \end{align*}
   and
   \begin{align*}
   {\Pi}_-(\Omega^\pm): \mathcal{H}(\Gamma) & \rightarrow \mathcal{D}(\Omega^\pm) \\
   h & \mapsto - \lim_{r \searrow 1} \frac{1}{2 \pi i} \int_{g(\gamma_r)} \frac{h_{\Omega^-}(\zeta)}{\zeta - z} d\zeta.
  \end{align*}
  \begin{corollary}  \label{co:outside_bound_J}
   Let $\Gamma$ be a quasicircle not containing $\infty$, bounding
   domains $\Omega^\pm$.  Then ${\Pi}_+(\Omega^\pm) = {\Pi}_-(\Omega^\pm)$.
   Furthermore, ${\Pi}_+(\Omega^\pm)$ and ${\Pi}_-(\Omega^\pm)$ are
   each bounded with respect to both norms $\mathcal{H}_+(\Gamma)$
   and $\mathcal{H}_-(\Gamma)$ on the domain.
  \end{corollary}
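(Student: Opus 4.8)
The plan is to deduce the corollary from Theorem~\ref{th:contour_integral_jump} and Theorem~\ref{th:J_bounded}: the former identifies the two limiting contour integrals with the jump operator, while the latter, together with the equivalence of the two semi-norms on $\mathcal{H}(\Gamma)$ for a quasicircle, supplies the boundedness. Essentially everything has already been done; what remains is bookkeeping.

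First I would unwind the definitions. By Theorem~\ref{th:contour_integral_jump}, for every $h \in \mathcal{H}(\Gamma)$ and every $z \in \sphere \setminus \Gamma$,
\[
 \lim_{r \nearrow 1} \frac{1}{2\pi i} \int_{f(\gamma_r)} \frac{h_{\Omega^+}(\zeta)}{\zeta - z}\, d\zeta
 = J(\Gamma)h(z)
 = \lim_{r \searrow 1} \frac{1}{2\pi i} \int_{g(\gamma_r)} \frac{h_{\Omega^-}(\zeta)}{\zeta - z}\, d\zeta .
\]
Restricting these identities first to $z \in \Omega^+$ and then to $z \in \Omega^-$, and comparing with the definitions of $\Pi_+(\Omega^\pm)$, $\Pi_-(\Omega^\pm)$ (whose signs were chosen precisely so that the two realizations agree) and of $P(\Omega^\pm)$ from Theorem~\ref{th:J_bounded}, one reads off at once that $\Pi_+(\Omega^\pm)$ and $\Pi_-(\Omega^\pm)$ are one and the same operator, namely $J(\Gamma)$ restricted to $\Omega^+$, respectively $\Omega^-$ (up to the sign that distinguishes $P(\Omega^+)$ from $P(\Omega^-)$). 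This proves the first assertion.

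Next I would bring in boundedness. By the identification just made, $\Pi_+(\Omega^\pm)=\Pi_-(\Omega^\pm)$ coincides, up to sign, with $P(\Omega^+)$ on $\Omega^+$ and with $P(\Omega^-)$ on $\Omega^-$; and Theorem~\ref{th:J_bounded} already shows that $P(\Omega^+)\colon \mathcal{H}_+(\Gamma)\to\mathcal{D}(\Omega^+)$ and $P(\Omega^-)\colon \mathcal{H}_+(\Gamma)\to\mathcal{D}(\Omega^-)$ are bounded for the semi-norm $\|\cdot\|_{\mathcal{H}_+(\Gamma)}$. To upgrade this to boundedness for $\|\cdot\|_{\mathcal{H}_-(\Gamma)}$ as well, I would use that $\Gamma$ is a quasicircle: by Theorem~\ref{th:quasicircle_characterization} and \eqref{eq:norm_equivalent_in_out} the semi-norms $\|\cdot\|_{\mathcal{H}_+(\Gamma)}$ and $\|\cdot\|_{\mathcal{H}_-(\Gamma)}$ are equivalent on $\mathcal{H}(\Gamma)$, so boundedness with respect to one is equivalent to boundedness with respect to the other, as recorded in Remark~\ref{re:in_out_automatic}. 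This completes the plan.

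There is no genuine obstacle here; all of the analytic content has already been carried out in Theorems~\ref{th:contour_integral_jump} and \ref{th:J_bounded}, and, behind the norm equivalence, in the quasisymmetry and conformal-welding arguments of Section~\ref{se:reflection}. The only point requiring care is the sign and orientation bookkeeping in the first step --- keeping straight the minus sign in the definition of $\Pi_-(\Omega^\pm)$, which mirrors the sign difference between $P(\Omega^+)$ and $P(\Omega^-)$, so that the asserted equalities of operators hold exactly rather than merely up to an ambiguous sign.
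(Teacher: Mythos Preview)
Your proposal is correct and follows exactly the paper's approach: the paper's proof simply cites Theorem~\ref{th:J_bounded}, equation~\eqref{eq:norm_equivalent_in_out}, and Theorem~\ref{th:contour_integral_jump}, which is precisely the combination you invoke. Your added remark about the sign bookkeeping is appropriate, since that is indeed the only place where one must be careful in matching the definitions of $\Pi_\pm(\Omega^\pm)$ with $P(\Omega^\pm)$.
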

  \begin{proof}
   This follows from Theorem \ref{th:J_bounded}, (\ref{eq:norm_equivalent_in_out}) and Theorem \ref{th:contour_integral_jump}.
  \end{proof}

  \begin{remark}  Of course, the integral operators (\ref{eq:J_defn_contour}) define bounded operators
   directly from $\mathcal{D}_{\mathrm{harm}}(\Omega^\pm)$ with respect to either the semi-norms or norms.
  \end{remark}

 \begin{remark} In defining the operator $J(\Gamma)$, we ``broke the symmetry'' between the inside and
  outside by choosing to
  take the integral over $\Omega^+$.  Theorem \ref{th:contour_integral_jump}
  can be used to show that for quasicircles, $J(\Gamma)$ can be defined using the integral over $\Omega^-$
  \[  J(\Gamma)h(z) := h_{\Omega^-}(z) \chi_{\Omega^{-}}(z)+\frac{1}{\pi }\iint_{\Omega^{-}}
   \frac{\overline{\partial} h_{\Omega^-}(\zeta)}{\zeta -z}\, dA(\zeta), \,\,\, z\in \mathbb{C}\setminus \Gamma  \]
  with no change to the outcome (we leave the proof to the interested reader).
  This is obvious for highly regular curves by Stokes' theorem, but for quasicircles it is a surprisingly subtle point. The proof of Theorem \ref{th:contour_integral_jump}
  suggests that it is closely linked to the density of polynomials in the Dirichlet space of a quasicircle.
 \end{remark}

  \begin{remark}
   It is also possible to prove a version of Corollary \ref{cor:boundedness of P(omega)} where the pointed norm
   is taken on the outside.  However, to do so one must alter the normalization of
   the Cauchy kernel
   so that $P(\Omega^+)h(p)=0$ for some point $p \in \Omega^+$, rather than $P(\Omega^-)h(\infty)=0$
   as we have here.
  \end{remark}
\end{subsection}
\begin{subsection}{The jump decomposition on quasicircles}
 In this section we prove that the jump decomposition on quasicircles holds.  To do this, we will
 first show that $\mathcal{H}(\Gamma)$ is naturally isomorphic to $\mathcal{D}(\Omega^+) \oplus \mathcal{D}_*(\Omega^-)$.
 The actual jump formula follows easily.

 We require a theorem.
 \begin{theorem} \label{th:jump_expected} Let $\Gamma$ be a quasicircle not containing $\infty$, bounding domains $\Omega^\pm$.
  For any $h \in \mathcal{H}(\Gamma)$ such that $h_{\Omega^-} \in \mathcal{D}_*(\Omega^-)$, we have that
  \[  J(\Gamma) h(z) = \left\{ \begin{array}{cc} 0 & z \in \Omega^+ \\ - h_{\Omega^-}(z) & z \in \Omega^-
   \end{array} \right..    \]
  On the other hand, for any $h \in \mathcal{H}(\Gamma)$ such that $h_{\Omega^+} \in \mathcal{D}(\Omega^+)$
  \[  J(\Gamma) h(z) = \left\{ \begin{array}{cc}h_{\Omega^+}(z) & z \in \Omega^+ \\
      0 & z \in \Omega^- \end{array} \right..  \]
 \end{theorem}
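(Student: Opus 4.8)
The second assertion is immediate from the definition. If $h_{\Omega^+}$ is holomorphic on $\Omega^+$ then $\overline{\partial} h_{\Omega^+} \equiv 0$ there, so the area integral in \eqref{defn: Cauchy integral} vanishes identically and $J(\Gamma)h(z) = h_{\Omega^+}(z)\chi_{\Omega^+}(z)$; for $z \notin \Gamma$ this is $h_{\Omega^+}(z)$ on $\Omega^+$ and $0$ on $\Omega^-$, which is the asserted formula. The content of the theorem is therefore the first assertion, and to prove it I will use the second limiting contour-integral representation of $J(\Gamma)$ from Theorem \ref{th:contour_integral_jump}, which reduces everything to a residue computation on the exterior disk.

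Fix a conformal map $g \colon \disk^- \to \Omega^-$; since Theorem \ref{th:contour_integral_jump} holds for every such map, I may assume $g(\infty) = \infty$. After the substitution $\zeta = g(w)$, that theorem gives, for every $z \in \sphere \backslash \Gamma$,
\[
  J(\Gamma)h(z) = \lim_{r \searrow 1} \frac{1}{2\pi i}\int_{|w| = r} \frac{(h_{\Omega^-}\circ g)(w)\, g'(w)}{g(w) - z}\, dw,
\]
the circles $|w| = r$ being traced counterclockwise. Write $F_z$ for the integrand. Since $h_{\Omega^-} \in \mathcal{D}_*(\Omega^-)$ is holomorphic on $\Omega^-$ with $h_{\Omega^-}(\infty) = 0$, the composite $h_{\Omega^-}\circ g$ is holomorphic on $\disk^-$ and equals $O(1/w)$ as $w \to \infty$; and since $g$ is conformal with $g(\infty) = \infty$ one has $g(w) = aw + O(1)$ with $a \neq 0$, hence $g'(w)/(g(w) - z) = 1/w + O(1/w^2)$ near $\infty$ for any fixed $z \in \mathbb{C}$. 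Therefore $F_z(w) = O(1/w^2)$ as $w \to \infty$; in particular $F_z$ is holomorphic at $\infty$ with $\operatorname{Res}_{w = \infty} F_z = 0$.

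It remains to count residues on $\{|w| > r\}\cup\{\infty\}$. If $z \in \Omega^+$, then $g(w) \neq z$ for all $w \in \disk^-$, so $F_z$ is holomorphic on all of $\disk^-$; together with $\operatorname{Res}_{w=\infty}F_z = 0$ this forces $\int_{|w| = r} F_z\, dw = 0$ for every $r > 1$, and so $J(\Gamma)h(z) = 0$. If $z \in \Omega^- \backslash \{\infty\}$, then $w_z := g^{-1}(z)$ is a finite point of $\disk^-$, and $F_z$ has there a simple pole with $\operatorname{Res}_{w = w_z} F_z = h_{\Omega^-}(g(w_z)) = h_{\Omega^-}(z)$, the derivative factor $g'(w_z)$ cancelling. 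Choosing $r$ with $1 < r < |w_z|$ and applying the residue theorem on the annulus $r < |w| < R$, then letting $R \to \infty$ and using $\operatorname{Res}_{w=\infty} F_z = 0$, one gets $\frac{1}{2\pi i}\int_{|w| = r} F_z\, dw = -h_{\Omega^-}(z)$, hence $J(\Gamma)h(z) = -h_{\Omega^-}(z)$. Finally, at $z = \infty$ both sides vanish: $J(\Gamma)h(z) \to 0$ as $z \to \infty$ by \eqref{defn: Cauchy integral}, and $h_{\Omega^-}(\infty) = 0$.

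The only real care needed is the bookkeeping: keeping the orientations straight in Theorem \ref{th:contour_integral_jump} and in the residue theorem on the exterior region, and verifying the $O(1/w^2)$ decay of $F_z$ at infinity so that the residue there drops out. Everything else is routine; the one minor nuisance is the separate (trivial) treatment of $z = \infty$, forced by \eqref{defn: Cauchy integral} being stated literally for $z \in \mathbb{C} \backslash \Gamma$.
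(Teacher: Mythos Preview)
Your proof is correct. The paper's argument is close to yours in spirit but organized a bit differently, and your treatment of the second assertion is actually more direct.

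For the second assertion (the case $h_{\Omega^+}\in\mathcal{D}(\Omega^+)$), the paper invokes Theorem~\ref{th:contour_integral_jump} and then applies the classical Cauchy integral formula on the smooth level curves $f(\gamma_r)$. Your observation that $\overline{\partial}h_{\Omega^+}\equiv 0$ kills the area integral in \eqref{defn: Cauchy integral} outright is simpler and avoids Theorem~\ref{th:contour_integral_jump} entirely for this half of the statement.

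For the first assertion (the case $h_{\Omega^-}\in\mathcal{D}_*(\Omega^-)$), both proofs rest on the second limiting formula in Theorem~\ref{th:contour_integral_jump}. The paper stays in the image domain and applies the classical Cauchy formula directly on the smooth Jordan curve $g(\gamma_r)$, noting that for each fixed $z$ the integral is eventually independent of $r$. You instead pull back to $|w|=r$ via $\zeta=g(w)$ and run a residue count on $\disk^-$, using the decay $F_z(w)=O(1/w^2)$ (which is exactly where the hypotheses $g(\infty)=\infty$ and $h_{\Omega^-}(\infty)=0$ enter). These are two phrasings of the same computation; your version makes the role of the vanishing at $\infty$ slightly more explicit, while the paper's version keeps the geometry in $\Omega^-$.
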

 \begin{proof}
  We apply the classical Plemelj-Sokhotski jump formula on regular curves and use the limiting
  integral formula.  For a holomorphic function $h$ on a domain $\Omega$ bounded by a smooth Jordan
  curve $\gamma$, if $h$ extends continuously to $\gamma$ then the classical formula yields
  \[  \frac{1}{2 \pi i } \int_{\gamma} \frac{h(\zeta)}{\zeta - z} d\zeta = \left\{ \begin{array}{cc}
     h(z) & z \in \Omega \\ 0 & z \in \mathbb{C} \backslash \text{cl} \Omega  \end{array} \right.  \]
     and similarly for $h$ holomorphic in the complementary region and vanishing at $\infty$.

  Now assume that $h$ is the boundary values of some $h_{\Omega^+}$ in the sense of
  Osborn.  By Theorem \ref{th:contour_integral_jump}, fixing $z$ we have
  \begin{equation*}
   J(\Gamma) h (z)   = \lim_{r \nearrow 1} \frac{1}{2 \pi i} \int_{f(\gamma_r)} \frac{h_{\Omega^+}(\zeta)}{\zeta - z} d \zeta
    =  \left\{ \begin{array}{cc}h_{\Omega^+}(z) & z \in \Omega^+ \\
      0 & z \in \Omega^- \end{array} \right.
  \end{equation*}
  since for any $z \notin \Gamma$, the curve $f(\gamma_r)$ eventually lies either always inside or always outside
  $f(\gamma_r)$ on some interval $[R,1)$ for $R <1$, and thus the integral is independent of $r$; for any fixed $r \in [R,1)$
  we can apply the classical formula.  The other claim is proven similarly using the integral formula involving the
  conformal map $g:\disk^- \rightarrow \Omega^-$.
 \end{proof}

 We will need some identities to prove the jump decomposition.  Let $\Gamma$ be a
 quasicircle not containing $\infty$, bounding domains $\Omega^\pm$.
 We define the mapping
 \begin{align*}
  \mathbf{I}_f: \mathcal{D}_*(\disk^-) & \rightarrow \mathcal{D}_*(\Omega^-) \\
  h & \mapsto P(\Omega^-) C_{f^{-1}} h.
 \end{align*}
 There is a restriction to the boundary which we have suppressed for simplicity of notation.
 We know that this is a bounded map into $\mathcal{D}_*(\Omega^-)$ by Theorems \ref{th:J_bounded}
 and \ref{th:normal_comp_isometry}.
 \begin{theorem}  \label{th:If_without_reflection}
  Let $\Gamma$ be a Jordan curve not containing $\infty$,
  with bounded and unbounded components $\Omega^\pm$ respectively. Let $f:\disk^+ \rightarrow \Omega^+$
   be a conformal map. For any
  polynomial $h \in \mathbb{C}[1/z]$  with zero constant term,
  \[ \mathbf{I}_f h (z) = - \lim_{r \nearrow 1}  \frac{1}{2 \pi i}
   \int_{f(\gamma_r)} \frac{  h \circ f^{-1}(\zeta)}
   {\zeta - z} \, d\zeta.  \]
 \end{theorem}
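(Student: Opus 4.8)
The plan is to unwind the definition of $\mathbf{I}_f$, identify explicitly the harmonic extension that enters $J(\Gamma)$, and then show that it can be traded for the meromorphic function $h\circ f^{-1}$ at the cost of a residue at a single point, which disappears in the limit. Throughout, since $\mathbf{I}_f h\in\mathcal{D}_*(\Omega^-)$, I take $z\in\Omega^-$.

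First I would record the reduction. By definition $\mathbf{I}_f h=P(\Omega^-)C_{f^{-1}}h=-J(\Gamma)(C_{f^{-1}}h)\big|_{\Omega^-}$, where $C_{f^{-1}}h$ is the boundary function $(h|_{\mathbb{S}^1})\circ f^{-1}$ on $\Gamma$; since $h|_{\mathbb{S}^1}\in\mathcal{H}(\mathbb{S}^1)$ and $\mathcal{D}_{\mathrm{harm}}$ is conformally invariant, this does lie in $\mathcal{H}_+(\Gamma)$. The proof of the first integral formula in Theorem \ref{th:contour_integral_jump} uses only Stokes' theorem on the region $f(\{|w|<r\})$, the Carath\'eodory extension of $f$, and the fact that $\bar\partial g_{\Omega^+}\in L^1(\Omega^+)$, none of which requires the quasicircle hypothesis; hence for any Jordan curve and $g\in\mathcal{H}_+(\Gamma)$ one has $J(\Gamma)g(z)=\lim_{r\nearrow1}(2\pi i)^{-1}\int_{f(\gamma_r)}g_{\Omega^+}(\zeta)(\zeta-z)^{-1}\,d\zeta$ for $z\in\Omega^-$. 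Applying this with $g=C_{f^{-1}}h$, it remains to prove that $(C_{f^{-1}}h)_{\Omega^+}$ may be replaced by $h\circ f^{-1}$ inside the limit, i.e. that
\[
  \lim_{r\nearrow1}\frac{1}{2\pi i}\int_{f(\gamma_r)}\frac{\big[(C_{f^{-1}}h)_{\Omega^+}-h\circ f^{-1}\big](\zeta)}{\zeta-z}\,d\zeta=0 .
\]

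Next I would identify the extension. Writing $h(z)=\sum_{k=1}^{n}a_kz^{-k}$, the boundary values of $h$ on $\mathbb{S}^1$ are $\sum_{k=1}^{n}a_ke^{-ik\theta}$, whose harmonic extension in $\mathcal{D}_{\mathrm{harm}}(\disk^+)$ is $H_+(z):=\sum_{k=1}^{n}a_k\bar{z}^k$, so by conformal invariance and uniqueness of harmonic extensions $(C_{f^{-1}}h)_{\Omega^+}=H_+\circ f^{-1}$. Substituting $\zeta=f(w)$ (legitimate since $f(\gamma_r)$ is an analytic curve inside $\Omega^+$ for $r<1$) turns the two relevant integrals into contour integrals over $|w|=r$ of $H_+(w)\,f'(w)/(f(w)-z)$ and of $h(w)\,f'(w)/(f(w)-z)$; the factor $f'(w)/(f(w)-z)$ is holomorphic on $\disk^+$ because $z\in\Omega^-$ forces $f(w)\neq z$ there. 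On the circle $|w|=r$ one has $\bar{w}^k=r^{2k}w^{-k}$, so there the difference of the two integrands equals $\big(\sum_{k=1}^{n}a_k(r^{2k}-1)w^{-k}\big)f'(w)/(f(w)-z)$, which agrees on $|w|=r$ with a function meromorphic on $\disk^+$ whose only pole is at $w=0$.

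Finally, by the residue theorem that contour integral equals $\sum_{k=1}^{n}a_k(r^{2k}-1)\,\mathrm{Res}_{w=0}\big(w^{-k}f'(w)/(f(w)-z)\big)$, which is a constant (depending on $f$ and $z$ but not $r$) multiplied by $r^{2k}-1\to0$ as $r\nearrow1$. This gives the displayed vanishing, hence $J(\Gamma)(C_{f^{-1}}h)(z)=\lim_{r\nearrow1}(2\pi i)^{-1}\int_{f(\gamma_r)}h\circ f^{-1}(\zeta)(\zeta-z)^{-1}\,d\zeta$, i.e. the claimed formula. The one genuine subtlety — essentially the whole content of the statement — is that $h\circ f^{-1}$ is a \emph{meromorphic} function on $\Omega^+$ with a pole at $f(0)$, not the harmonic extension of its own boundary values; the computation above shows this defect is concentrated at $w=0$ and is killed by the vanishing factor $r^{2k}-1$, so nothing about the regularity of $\Gamma$ is needed. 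I expect this replacement step to be the main (and essentially only) obstacle.
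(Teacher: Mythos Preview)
Your proof is correct and follows essentially the same route as the paper: identify the harmonic extension of $h|_{\mathbb{S}^1}\circ f^{-1}$ as $\sum_k a_k\,\overline{f^{-1}(\cdot)}^k$, change variables to $|w|=r$, use $\bar w^{\,k}=r^{2k}w^{-k}$ there, and exploit that the resulting meromorphic integrand $w^{-k}f'(w)/(f(w)-z)$ has $r$-independent contour integral for $z\in\Omega^-$. The only cosmetic difference is that the paper writes the $r^{2n}$ factor in front of a constant integral and passes to the limit directly, whereas you subtract and isolate the factor $r^{2k}-1$; these are the same computation.
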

  \begin{proof}  By linearity it is enough to verify this for monomials.
 Setting $h(z)=z^{-n}$ for $n>0$, it is easily verified that
 \[  (h \circ f^{-1})_{\Omega^+}(z)= \overline{h  \circ (1/f^{-1}(z))}    \]
 since the boundary values agree on $\mathbb{S}^1$ after composition by $f$.  Thus
 \begin{align*}
  (\mathbf{I}_f h)(z) & = -  \lim_{r \nearrow 1} \frac{1}{2 \pi i} \int_{f(\gamma_r)}
   \frac{ \overline{f^{-1}(\zeta)}^{n}}{\zeta - z}\, d\zeta \\
   & = -  \lim_{ r \nearrow 1} \frac{1}{2 \pi i} \int_{\gamma_r} \frac{\overline{\xi}^{n}}
   { f(\xi) - z} f'(\xi)\, d\xi \\
   & = -  \lim_{ r \nearrow 1} \frac{1}{2 \pi i} \int_{\gamma_r} \frac{{r^{2n}} \xi^{-n}}
   { f(\xi) - z} f'(\xi) \,d\xi \\
   & = -  \lim_{r \nearrow 1} \frac{1}{2 \pi i} \int_{\gamma_r} \frac{\xi^{-n}}
   {f(\xi) - z} f'(\xi) \,d\xi \\
   & = -  \lim_{r \nearrow 1} \frac{1}{2 \pi i} \int_{f(\gamma_r)} \frac{ h \circ f^{-1}(\zeta) }{\zeta - z}
    \,d\zeta
 \end{align*}
 where we have used the fact that the integral
 $\int_{\gamma_r} {\xi^{-n} \cdot  f'(\xi) } /
   {(f(\xi) - z)}d\xi$ is independent of $r$.
 \end{proof}

 The map $\mathbf{I}_f$ is closely related to Faber polynomials, whose definition we now recall.
 \begin{definition}  \label{de:Faber_polynomials}
  Let $f:\disk \rightarrow \mathbb{C}$
  be a holomorphic map which is one-to-one on a neighbourhood of $0$.  Let $p=f(0)$.  The $n$th Faber polynomial
  of $f$ is
  \begin{equation} \label{eq:Faber_defn_cs}
   \Phi_n(f)(z) = \frac{c_{-n}}{(z-p)^{n}} + \cdots + \frac{c_{-1}}{(z-p)}
  \end{equation}
  where the $c_n$ are defined by
  \[ (f^{-1}(z))^{-n}(z) = \sum_{k=-n}^\infty c_{k} (z-p)^k.  \]
 \end{definition}
 \begin{remark}  Usually, it is assumed that $p=0$ in the literature.
 Allowing $p \neq 0$ costs nothing and we will require it for use in future papers.
 It is also common to include the constant term in the expansion (\ref{eq:Faber_defn_cs})
 in the Faber polynomials.
 \end{remark}
 By Theorem \ref{th:If_without_reflection}, we have that
 \begin{equation}  \label{eq:If_and_Faber}
  \mathbf{I}_f(z^{-n}) = \Phi_n.
 \end{equation}

 The Faber polynomials satisfy the following identity.
 \begin{equation} \label{eq:Faber_identity}
  \Phi_n(f) \circ f(z) = z^{-n} + \sum_{k=0}^\infty \beta_k^n z^k
 \end{equation}
 for some coefficients $\beta_k^n$ referred to as the Grunsky coefficients \cite{Durenbook,Pommerenkebook}
 (usually in the case that $p=0$).
 To prove (\ref{eq:Faber_identity}), observe that $\Phi_n(f)(\zeta) = f^{-1}(\zeta)^{-n} - H(\zeta)$
 where $H = P(\Omega^+)({(f^{-1})}^{-n})$ is holomorphic on $\Omega^+$.
 Thus $\Phi_n(f)(f(z)) = z^{-n} - H \circ f(z)$ where $H \circ f$ is holomorphic on
 $\disk^+$.
 Since $f$ is holomorphic on $|z|<1$ and $\Phi_n(f)$ is holomorphic on $0<|z|<\infty$,
 the radius of convergence of the power series on the right hand side is greater
 than or equal to one.  This leads to the following Lemma.
 \begin{lemma}  \label{le:If_injective} Let $\Gamma$ be a quasicircle not containing
  $\infty$.   The map $P(\disk^-) C_f$ is a bounded left inverse of $\mathbf{I}_f$.  In particular,
  $\mathbf{I}_f$ is injective.
 \end{lemma}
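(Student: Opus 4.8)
The plan is to show that $P(\disk^-)\,C_f$ is bounded and that $\bigl(P(\disk^-)\,C_f\bigr)\circ\mathbf{I}_f = \mathrm{id}$ on $\mathcal{D}_*(\disk^-)$; injectivity of $\mathbf{I}_f$ then follows immediately. Boundedness of $P(\disk^-)\,C_f$ is assembled from facts already at hand: the passage from $\mathcal{D}_*(\Omega^-)$ to boundary values in $\mathcal{H}(\Gamma)=\mathcal{H}_-(\Gamma)$ is bounded (by the correspondence of Theorem \ref{th:boundary_values_disk} transported through a conformal map $\disk^-\to\Omega^-$, together with Theorem \ref{th:quasicircle_characterization}), $C_f:\mathcal{H}(\Gamma)\to\mathcal{H}(\mathbb{S}^1)$ is bounded by Theorem \ref{th:normal_comp_isometry} and the norm equivalence \eqref{eq:norm_equivalent_in_out}, and $P(\disk^-):\mathcal{H}(\mathbb{S}^1)\to\mathcal{D}(\disk^-)$ is bounded by Theorem \ref{th:J_bounded}, its image lying automatically in $\mathcal{D}_*(\disk^-)$ since $J(\mathbb{S}^1)h$ vanishes at $\infty$. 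Since $\mathbf{I}_f$ is bounded (as noted just before the lemma) and the polynomials in $\mathbb{C}[1/z]$ with zero constant term are dense in $\mathcal{D}_*(\disk^-)$ (transporting, by $z\mapsto 1/z$, the density of polynomials in the holomorphic Dirichlet space used in the proof of Theorem \ref{th:contour_integral_jump}), it is enough to verify the left-inverse identity on the monomials $z^{-n}$, $n\ge 1$. By \eqref{eq:If_and_Faber} this reduces to proving $P(\disk^-)\,C_f\,\Phi_n = z^{-n}$.

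To evaluate $C_f\Phi_n$ I would use the Faber identity \eqref{eq:Faber_identity}. Since $\Phi_n$ is a rational function whose only pole lies at $p=f(0)\in\Omega^+$, it is holomorphic on a neighbourhood of $\mathrm{cl}\,\Omega^-$, so its Osborn boundary values on $\Gamma$ coincide with its genuine restriction, and $C_f\Phi_n$ is the restriction of $\Phi_n\circ f$ to $\mathbb{S}^1$. On $\disk^+$ one has $\Phi_n\circ f(z)=z^{-n}+\sum_{k\ge 0}\beta_k^n z^k$, and as $\Phi_n\circ f$ is continuous on $\mathrm{cl}\,\disk^+\setminus\{0\}$, letting $z\to e^{i\theta}$ radially yields
\[
 C_f\Phi_n(e^{i\theta}) = e^{-in\theta} + \varphi_n(e^{i\theta}), \qquad \varphi_n(e^{i\theta}):=\sum_{k\ge 0}\beta_k^n e^{ik\theta},
\]
where the tail $\sum_{k\ge 0}\beta_k^n z^k$ is likewise continuous up to $\mathbb{S}^1$. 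Now $C_f\Phi_n\in\mathcal{H}(\mathbb{S}^1)$ (Theorem \ref{th:normal_comp_isometry}) and $e^{-in\theta}\in\mathcal{H}(\mathbb{S}^1)$, so $\varphi_n\in\mathcal{H}(\mathbb{S}^1)$; since its Fourier coefficients are supported on the nonnegative integers, its harmonic extension is the holomorphic function $\sum_{k\ge 0}\beta_k^n z^k\in\mathcal{D}(\disk^+)$ by Theorem \ref{th:boundary_values_disk}, while $e^{-in\theta}$ is the boundary value of $z^{-n}\in\mathcal{D}_*(\disk^-)$. Applying $P(\disk^-)$, using its linearity and the two cases of Theorem \ref{th:jump_expected} for the curve $\mathbb{S}^1$ — which give $P(\disk^-)\varphi_n=0$ (as $\varphi_n$ extends holomorphically to $\disk^+$) and $P(\disk^-)(e^{-in\theta})=z^{-n}$ (as $z^{-n}$ is holomorphic on $\disk^-$ and vanishes at $\infty$) — we obtain $P(\disk^-)\,C_f\,\Phi_n=z^{-n}$, as needed.

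The step I expect to be the main obstacle is the passage in the second paragraph from the Faber identity, which a priori is an equality of power series on $\disk^+$, to the boundary identity $C_f\Phi_n = e^{-in\theta}+\varphi_n$ in $\mathcal{H}(\mathbb{S}^1)$, and in particular the verification that the tail $\sum_k\beta_k^n z^k$ really belongs to $\mathcal{D}(\disk^+)$, so that the clean splitting of $C_f\Phi_n$ into a piece extending holomorphically to $\disk^-$ plus a piece extending holomorphically to $\disk^+$ is legitimate and Theorem \ref{th:jump_expected} is applicable. The regularity of $\Phi_n$ near $\Gamma$ (rational, with its pole trapped inside $\Omega^+$) does most of the work here. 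Once that splitting is established, the density-and-continuity argument of the first paragraph promotes $P(\disk^-)C_f\mathbf{I}_f=\mathrm{id}$ from $\mathbb{C}[1/z]$ to all of $\mathcal{D}_*(\disk^-)$, yielding simultaneously that $P(\disk^-)C_f$ is a bounded left inverse and that $\mathbf{I}_f$ is injective.
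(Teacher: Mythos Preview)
Your proof is correct and follows essentially the same route as the paper: verify $P(\disk^-)C_f\,\mathbf{I}_f(z^{-n})=z^{-n}$ via \eqref{eq:If_and_Faber} and the Faber identity \eqref{eq:Faber_identity}, then extend by density and boundedness. You have simply made explicit what the paper leaves implicit---the assembly of boundedness for $P(\disk^-)C_f$, and the appeal to Theorem~\ref{th:jump_expected} (for $\mathbb{S}^1$) to split off the holomorphic tail---and your self-flagged ``obstacle'' about $\varphi_n\in\mathcal{D}(\disk^+)$ is already handled by your own observation that $C_f\Phi_n$ and $e^{-in\theta}$ both lie in $\mathcal{H}(\mathbb{S}^1)$.
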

 \begin{proof}
  By Theorem \ref{th:If_without_reflection} and equations (\ref{eq:If_and_Faber}) and (\ref{eq:Faber_identity}) this
  holds for polynomials in $\mathbb{C}[1/z]$  with zero constant term.  Since these are dense in $\mathcal{D}(\disk^-)$
  and $P(\disk^-) C_f$ and $\mathbf{I}_f$ are bounded, this completes the proof.
 \end{proof}

 We may finally prove the main result of the paper.
 \begin{theorem} \label{th:jump_decomposition_K}
  For a quasicircle $\Gamma$ not containing $\infty$, bounding domains $\Omega^\pm$, and $p \in \Omega^+$
 define
 \begin{align*}
  K:\mathcal{H}(\Gamma) & \rightarrow \mathcal{D}(\Omega^+) \oplus \mathcal{D}_*(\Omega^-) \\
  h & \mapsto (P(\Omega^+) h , P(\Omega^-) h).
 \end{align*}
  Then $K$ is a bounded
  isomorphism with respect to the norms $\| \cdot \|_{\mathcal{H}_p(\Gamma)}$ and $\| \cdot \|_{\mathcal{D}_p(\Omega^+) \oplus
  \mathcal{D}(\Omega^-)}$.
 \end{theorem}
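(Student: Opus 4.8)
The plan is to exhibit an explicit bounded two-sided inverse of $K$. Boundedness of $K$ itself is Corollary~\ref{cor:boundedness of P(omega)}. Write $\rho_{\Omega^+}$ and $\rho_{\Omega^-}$ for the Osborn restriction operators from $\mathcal{D}_{\mathrm{harm}}(\Omega^+)$ and $\mathcal{D}_{\mathrm{harm}}(\Omega^-)$ onto $\mathcal{H}(\Gamma)$; for a quasicircle these are bounded with respect to the pointed norms, by Theorem~\ref{th:boundary_values_disk} carried through conformal maps together with the reflection bounds of Theorem~\ref{th:quasicircle_characterization} and its pointed-norm analogue. I would then introduce the candidate inverse
\[
 L\colon\mathcal{D}(\Omega^+)\oplus\mathcal{D}_*(\Omega^-)\longrightarrow\mathcal{H}(\Gamma),\qquad L(h_+,h_-)=\rho_{\Omega^+}h_+ + \rho_{\Omega^-}h_- ,
\]
which is bounded by the same remarks, and verify $KL=\mathrm{Id}$ and $LK=\mathrm{Id}$ in turn.

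The identity $KL=\mathrm{Id}$ is immediate from Theorem~\ref{th:jump_expected}: the harmonic extension of $\rho_{\Omega^+}h_+$ to $\Omega^+$ is the holomorphic function $h_+\in\mathcal{D}(\Omega^+)$, so $J(\Gamma)\rho_{\Omega^+}h_+$ equals $h_+$ on $\Omega^+$ and $0$ on $\Omega^-$, whence $P(\Omega^+)\rho_{\Omega^+}h_+=h_+$ and $P(\Omega^-)\rho_{\Omega^+}h_+=0$; symmetrically, the extension of $\rho_{\Omega^-}h_-$ to $\Omega^-$ is $h_-\in\mathcal{D}_*(\Omega^-)$, so $P(\Omega^+)\rho_{\Omega^-}h_-=0$ and $P(\Omega^-)\rho_{\Omega^-}h_-=h_-$. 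Linearity of $P(\Omega^\pm)$ gives $KL(h_+,h_-)=(h_+,h_-)$.

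For $LK=\mathrm{Id}$ the crux is the following claim: \emph{for every $h\in\mathcal{H}(\Gamma)$, the function $T_{\Omega^+}\overline{\partial}h_{\Omega^+}$ --- which is harmonic with finite Dirichlet energy on $\Omega^+$ and holomorphic on $\Omega^-$ with value $0$ at $\infty$ --- has the same Osborn boundary values on $\Gamma$ whether computed from $\Omega^+$ or from $\Omega^-$}. Granting it, since $J(\Gamma)h=h_{\Omega^+}+T_{\Omega^+}\overline{\partial}h_{\Omega^+}$ on $\Omega^+$ and $J(\Gamma)h=T_{\Omega^+}\overline{\partial}h_{\Omega^+}$ on $\Omega^-$, we have $h_{\Omega^+}=P(\Omega^+)h-T_{\Omega^+}\overline{\partial}h_{\Omega^+}$ on $\Omega^+$ and $-P(\Omega^-)h=T_{\Omega^+}\overline{\partial}h_{\Omega^+}$ on $\Omega^-$; taking Osborn boundary values of the first identity and substituting via the claim and the second identity,
\[
 h=\rho_{\Omega^+}h_{\Omega^+}=\rho_{\Omega^+}\bigl(P(\Omega^+)h\bigr)-\rho_{\Omega^+}\bigl(T_{\Omega^+}\overline{\partial}h_{\Omega^+}\bigr)=\rho_{\Omega^+}\bigl(P(\Omega^+)h\bigr)+\rho_{\Omega^-}\bigl(P(\Omega^-)h\bigr)=LKh .
\]
Equivalently, this shows $K$ is injective, which together with $KL=\mathrm{Id}$ forces $K$ to be a bijection with inverse $L$.

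The proof of the claim is where I expect the real difficulty, and I would carry it out by density. The datum $\varphi:=\overline{\partial}h_{\Omega^+}$ is the conjugate of an element of the Bergman space $A^2(\Omega^+)$, and since $\Omega^+$ is a quasidisk, hence a Carathéodory domain, polynomials are dense in $A^2(\Omega^+)$, so conjugate polynomials are dense in $\overline{A^2(\Omega^+)}$. For $\varphi=\overline{p}$ with $p$ a polynomial, $\varphi$ is bounded on the bounded domain $\Omega^+$ and the Cauchy area transform of a bounded density over a bounded domain is continuous on all of $\mathbb{C}$; thus $T_{\Omega^+}\overline{p}$ extends continuously across $\Gamma$ and both one-sided Osborn limits equal this single genuine value. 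On the other hand, for any antiholomorphic $\varphi$ the function $T_{\Omega^+}\varphi$ is harmonic on $\Omega^+$ and holomorphic on $\Omega^-$ with value $0$ at $\infty$, and the estimates \eqref{estim: L2 norm of operator T}--\eqref{estim: Sobolev norm of operator T} (with the $L^2$-isometry of the Beurling transform, the mean-value bound from the proof of Corollary~\ref{cor:boundedness of P(omega)}, and the pointed reflection bound applied to $T_{\Omega^+}\varphi|_{\Omega^-}\in\mathcal{D}_*(\Omega^-)$) show that $\varphi\mapsto\rho_{\Omega^+}(T_{\Omega^+}\varphi)$ and $\varphi\mapsto\rho_{\Omega^-}(T_{\Omega^+}\varphi)$ are bounded operators from $\overline{A^2(\Omega^+)}$ into $\bigl(\mathcal{H}(\Gamma),\|\cdot\|_{\mathcal{H}_p(\Gamma)}\bigr)$. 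Two bounded operators agreeing on a dense subspace agree everywhere, which proves the claim. Finally, $K$ is a bounded bijection between the Banach spaces $\bigl(\mathcal{H}(\Gamma),\|\cdot\|_{\mathcal{H}_p(\Gamma)}\bigr)$ and $\mathcal{D}_p(\Omega^+)\oplus\mathcal{D}_*(\Omega^-)$ with bounded inverse $L$, hence a bounded isomorphism as asserted; alternatively one could drop the explicit bound on $L$ and invoke the open mapping theorem. (The operator $P(\Omega^-)$ could also be analyzed through $\mathbf{I}_f$ and the Faber-polynomial identities developed above, but the route sketched here seems more direct.)
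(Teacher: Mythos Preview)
Your proof is correct and, for the surjectivity half ($KL=\mathrm{Id}$), coincides with the paper's argument. For injectivity the paper takes a different path: it writes an arbitrary $h\in\mathcal{H}_p(\Gamma)$ as $C_{f^{-1}}(H_++H_-)$ with $H_\pm\in\mathcal{D}(\disk^\pm)$, observes that $Kh=0$ forces $\mathbf{I}_f H_-=P(\Omega^-)C_{f^{-1}}H_-=0$, and then invokes Lemma~\ref{le:If_injective} (the Faber-polynomial identity $\Phi_n(f)\circ f=z^{-n}+\text{holomorphic}$ shows $P(\disk^-)C_f$ is a bounded left inverse of $\mathbf{I}_f$) to conclude $H_-=0$, after which $H_+=0$ follows easily. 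Your route instead establishes directly that the two one-sided Osborn traces of $T_{\Omega^+}\overline{\partial}h_{\Omega^+}$ agree, by observing that for conjugate-polynomial densities the Cauchy area transform is genuinely continuous across $\Gamma$, and then passing to the limit using the $L^2$-to-Dirichlet bounds \eqref{estim: L2 norm of operator T}--\eqref{estim: Sobolev norm of operator T} together with the pointed reflection estimate. This bypasses both the limiting contour-integral reformulation of Theorem~\ref{th:contour_integral_jump} and the Faber machinery; it is arguably more self-contained, at the price of not producing the Faber identity \eqref{eq:Faber_identity} as a by-product (which the paper develops with later applications in mind). Both arguments ultimately rest on density of polynomials in the Bergman space of a Carath\'eodory domain, so the underlying analytic content is comparable.
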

 \begin{proof}
  The fact that $K$ is bounded follows immediately from either Theorem \ref{th:J_bounded} or Corollary \ref{cor:boundedness of P(omega)},
  depending on the desired choice of norms.   To see that it is
  surjective, given $(h_+,h_-) \in \mathcal{D}_p(\Omega^+) \oplus \mathcal{D}_*(\Omega^-)$, let $\tilde{h}_+$
  and $\tilde{h}_-$ denote their boundary values in $\mathcal{H}(\Gamma)$ and set $h = \tilde{h}_+ + \tilde{h}_-$.  By Theorem
  \ref{th:jump_expected}, we have that $P(\Omega^\pm)h = h_\pm$ which proves the claim.

  To show that $K$ is injective, choose a conformal map $f:\disk^+ \rightarrow \Omega^+$
   such that $f(0)=p$. Observe that $C_{f^{-1}}$ is an isometry from $\mathcal{D}_0(\disk^+)$ to $\mathcal{D}_p(\Omega^+)$
   by Theorem \ref{th:normal_comp_isometry}.  Hence it is an isometry from $\mathcal{H}_0(\mathbb{S}^1)$ to
   $\mathcal{H}_p(\Gamma)$.  Thus any $h \in \mathcal{H}_p(\Gamma)$ can be written $C_{f^{-1}} ( H_+ + H_-)$
   for $H_+ \in \mathcal{D}_0(\disk^+)$ and $H_- \in \mathcal{D}_*(\disk^-)$.    Now
   \begin{equation} \label{eq:Kh_formula}
    Kh = \left( P(\Omega^+) C_{f^{-1}} H_+ + P(\Omega^+) C_{f^{-1}} H_-, P(\Omega^-) C_{f^{-1}} H_- \right).
   \end{equation}
   If $Kh=0$ then $\mathbf{I}_f H_- = P(\Omega^-) C_{f^{-1}} H_- =0$ so by Lemma \ref{le:If_injective}
   $H_-=0$.  But then
   \[  0=  P(\Omega^+) C_{f^{-1}} H_+ + P(\Omega^+) C_{f^{-1}} H_- = P(\Omega^+) C_{f^{-1}} H_+. \]
   Since $C_{f^{-1}} H_+ \in \mathcal{D}_p(\Omega^+)$ we have that $C_{f^{-1}} H_+ = P(\Omega^+) C_{f^{-1}} H_+ =0$
   but $C_{f^{-1}}$ is an isometry so $H_+=0$.  Thus $h=0$ so $K$ is injective as claimed.
 \end{proof}
 \begin{remark}  It is not hard to show that, conversely, $K$ injective implies that $\mathbf{I}_f$ is injective
  directly from the definition of $K$.
 \end{remark}
  \begin{remark}  $K$ is also bounded with respect to the semi-norms on
  $\mathcal{H}(\Gamma)$ and $\mathcal{D}(\Omega^\pm)$ by Theorem \ref{th:J_bounded}.
 \end{remark}

Finally, as a consequence of our previous results we establish the well-posedness of the following Rieman-Hilbert problem which was posed in less precise terms in the introduction of this paper.

\begin{theorem} \label{th:Cauchy_bounded}
Let $\Gamma$ be a quasicircle not containing $\infty$ bounding $\Omega^\pm$ and let $h\in \mathcal{H}(\Gamma)$. Then there exist unique functions $u_+ \in \mathcal{D}(\Omega^{+})$, $u_- \in \mathcal{D}_*(\Omega^{-})$  such that the boundary values $h_{\pm}$ of $u_{\pm}$  are defined in the sense of Osborn and $h_{+}(w)+h_{-}(w)= h(w)$ except on a set of harmonic
 measure zero with respect to both $\Omega^+$ and $\Omega^-$. Moreover the solutions $u_\pm$ depend continuously on $h$.
\end{theorem}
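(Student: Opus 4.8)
The plan is to read the theorem off directly from the isomorphism $K$ of Theorem~\ref{th:jump_decomposition_K}, together with the ``expected jump'' computation of Theorem~\ref{th:jump_expected}. Given $h\in\mathcal{H}(\Gamma)$, I would set $u_+ = P(\Omega^+)h$ and $u_- = P(\Omega^-)h$. By Theorem~\ref{th:J_bounded} these lie in $\mathcal{D}(\Omega^+)$ and $\mathcal{D}(\Omega^-)$ respectively, and since $J(\Gamma)h(z)\to 0$ as $z\to\infty$ one has in fact $u_-\in\mathcal{D}_*(\Omega^-)$; moreover $P(\Omega^\pm)$ are bounded (Theorem~\ref{th:J_bounded} and Corollary~\ref{cor:boundedness of P(omega)}), so $h\mapsto u_\pm$ is continuous, which is precisely the continuous dependence asserted at the end. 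Since $u_\pm\in\mathcal{D}_{\mathrm{harm}}(\Omega^\pm)$, Theorem~\ref{Osborns sats} provides boundary values $h_\pm$ of $u_\pm$ in the sense of Osborn, and by Definition~\ref{de:Osborn_space} together with the Corollary after Theorem~\ref{th:quasicircle_characterization} these lie in $\mathcal{H}(\Gamma)$.

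Next I would verify the jump identity $h_+ + h_- = h$. The unique harmonic extension of $h_+$ to $\Omega^+$ is $u_+\in\mathcal{D}(\Omega^+)$, so the second part of Theorem~\ref{th:jump_expected} gives $J(\Gamma)h_+ = u_+\chi_{\Omega^+}$; the unique harmonic extension of $h_-$ to $\Omega^-$ is $u_-\in\mathcal{D}_*(\Omega^-)$, so the first part gives $J(\Gamma)h_- = -u_-\chi_{\Omega^-}$. By linearity of $J(\Gamma)$ we obtain $P(\Omega^+)(h_+ + h_-) = u_+ = P(\Omega^+)h$ and $P(\Omega^-)(h_+ + h_-) = u_- = P(\Omega^-)h$, that is, $K(h_+ + h_-) = Kh$. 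Injectivity of $K$ (Theorem~\ref{th:jump_decomposition_K}) forces $h_+ + h_- = h$ as elements of $\mathcal{H}(\Gamma)$, i.e.\ equality off a set of harmonic measure zero. To upgrade this to equality ``off a set of harmonic measure zero with respect to both $\Omega^+$ and $\Omega^-$'', I would invoke the final assertion of Theorem~\ref{th:quasicircle_characterization}: the boundary values of $u_+$ and of its reflection $\mathfrak{R}^{+-}(\Gamma)u_+$ agree off a set that is simultaneously the image of a capacity-zero subset of $\mathbb{S}^1$ under a conformal map $\disk^+\to\Omega^+$ and under a conformal map $\disk^-\to\Omega^-$, hence of harmonic measure zero with respect to both components; the same holds for $u_-$ (using $\mathfrak{R}^{-+}(\Gamma)$), and the union of the two exceptional sets retains this property. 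So $h_+$, $h_-$ and $h$ are all defined, and $h_+ + h_- = h$, off this single set.

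For uniqueness, suppose $(v_+,v_-)\in\mathcal{D}(\Omega^+)\oplus\mathcal{D}_*(\Omega^-)$ is another solution, with Osborn boundary values $g_\pm$ satisfying $g_+ + g_- = h$ off a set of harmonic measure zero, so $g_+ + g_- = h$ in $\mathcal{H}(\Gamma)$. Running the computation of the previous paragraph with $v_\pm$ in place of $u_\pm$ yields $K(g_+ + g_-) = (v_+,v_-)$, hence $(v_+,v_-) = Kh = (u_+,u_-)$ and $v_\pm = u_\pm$. I do not expect a genuine obstacle: the mathematical content is entirely contained in Theorems~\ref{th:J_bounded}, \ref{th:jump_expected} and \ref{th:jump_decomposition_K}, and the only delicate point is the bookkeeping of exceptional sets needed to phrase the boundary identity in the precise sense required — Osborn boundary values, coinciding off a set of harmonic measure zero with respect to both sides — which is exactly what the last clause of Theorem~\ref{th:quasicircle_characterization} supplies.
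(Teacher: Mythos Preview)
Your proposal is correct and follows essentially the same approach as the paper's proof: set $u_\pm = P(\Omega^\pm)h$, use Theorem~\ref{th:jump_expected} to compute $K$ on $h_+$ and $h_-$, and conclude the jump identity and uniqueness from the injectivity of $K$ in Theorem~\ref{th:jump_decomposition_K}, with continuity coming from Theorem~\ref{th:J_bounded} and Corollary~\ref{cor:boundedness of P(omega)}. Your treatment is in fact slightly more careful than the paper's on the point that the exceptional set has harmonic measure zero with respect to \emph{both} $\Omega^+$ and $\Omega^-$, where you correctly invoke the final clause of Theorem~\ref{th:quasicircle_characterization}.
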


\begin{remark}  The continuity of $u_\pm$ is with respect to either the pointed norms $\mathcal{H}_p(\Gamma)$, $\mathcal{D}_p(\Omega^+)$,
and $\mathcal{D}(\Omega^-)$,  or to the unpointed norms $\mathcal{H}(\Gamma)$, $\mathcal{D}(\Omega^+)$, and $\mathcal{D}(\Omega^-)$.
\end{remark}

 \begin{proof}  We claim that $u_\pm = P(\Omega^\pm)h$ is the unique solution to the problem. Indeed, $u_+ \in \mathcal{D}(\Omega^{+})$
 and $u_- \in \mathcal{D}_*(\Omega^-)$ by Theorem \ref{th:J_bounded}, which also shows the continuous dependence of the solutions on the initial data $h$ with respect to the semi-norms.  The continuous dependence for the pointed norms
 follows from Corollary \ref{cor:boundedness of P(omega)}.  By Theorem \ref{th:jump_expected}, $K h_+ = (u_+,0)$, so
  $K (h-h_+) = (0,u_-)$.  Since $K h_- = (0,u_-)$ and $K$ is injective we must have
  that $h-h_+ =h_-$ which proves the Sokhotski-Plemelj jump relation. The uniqueness of the solution is a consequence of Theorem \ref{th:jump_decomposition_K}.
 \end{proof}

\end{subsection}
\end{section}

\end{document}